\def\C{\ensuremath{\mathbb{C}}}
\def\P{\ensuremath{\mathbb{P}}}
\def\R{\ensuremath{\mathbb{R}}}
\def\Z{\ensuremath{\mathbb{Z}}}
\def\alg{\mathrm{alg}}
\def\ch{\mathop{\mathrm{ch}}\nolimits}
\def\Coh{\mathop{\mathrm{Coh}}\nolimits}
\def\cok{\mathop{\mathrm{cok}}}
\def\dim{\mathop{\mathrm{dim}}\nolimits}
\def\Ext{\mathop{\mathrm{Ext}}\nolimits}
\def\Gr{\mathop{\mathrm{Gr}}\nolimits}
\def\Hilb{\mathrm{Hilb}}
\def\Hom{\mathop{\mathrm{Hom}}\nolimits}
\def\im{\mathop{\mathrm{im}}\nolimits}
\def\NS{\mathop{\mathrm{NS}}\nolimits}
\def\Pic{\mathop{\mathrm{Pic}}}
\def\rk{\mathop{\mathrm{rk}}}
\def\RlHom{\mathop{\mathbf{R}\mathcal Hom}\nolimits}
\def\Sym{\mathop{\mathrm{Sym}}}
\def\td{\mathop{\mathrm{td}}\nolimits}
\def\into{\ensuremath{\hookrightarrow}}
\def\onto{\ensuremath{\twoheadrightarrow}}
\def\blank{\underline{\hphantom{A}}}
\def\Db{\mathrm{D}^{b}}
\def\st{\mathrm{stable}}
\def\VrdH{V^r_d(\abs{H})}
\def\VrdC{V^r_d(C)}
\def\star{(*)}
\newcommand\TFILTB[3]{%
\xymatrix@=1pc{
{0 = {#1}_0} \ar[rr]&&
{{#1}_1} \ar[rr]\ar[ld] &&
{{#1}_2} \ar[r]\ar[ld] &
{\cdots} \ar[r] & { {#1}_{#3-1}} \ar[rr] &&
{{#1}_{#3} = {#1}} \ar[ld]
\\
& *{{#2}_1} \ar@{.>}[ul] &&
{{#2}_2} \ar@{.>}[ul] & &&&
{{#2}_{{#3}}} \ar@{.>}[ul]
}}
\def\abs#1{\left\lvert#1\right\rvert}
\newcommand\stv[2]{\left\{#1\,\colon\,#2\right\}}
\newtheorem*{rep@theorem}{\rep@title}
\newcommand{\newreptheorem}[2]{%
\newenvironment{rep#1}[1]{%
 \def\rep@title{#2 \ref{##1}}%
 \begin{rep@theorem}}%
 {\end{rep@theorem}}}
\newtheorem{Thm}{Theorem}[section]
\newtheorem{Prop}[Thm]{Proposition}
\newtheorem{Lem}[Thm]{Lemma}
\newtheorem{PosLem}[Thm]{Positivity Lemma}
\newtheorem{Cor}[Thm]{Corollary}
\newtheorem{heur}[Thm]{Heuristic}
\newtheorem{thm-int}{Theorem}
\theoremstyle{definition}
\newtheorem{Def-s}[Thm]{Definition}
\newtheorem{Def}[Thm]{Definition}
\newtheorem{Rem}[Thm]{Remark}
\def\C{\ensuremath{\mathbb{C}}}
\def\P{\ensuremath{\mathbb{P}}}
\def\R{\ensuremath{\mathbb{R}}}
\def\Z{\ensuremath{\mathbb{Z}}}
\def\cA{\ensuremath{\mathcal A}}
\def\cE{\ensuremath{\mathcal E}}
\def\cO{\ensuremath{\mathcal O}}
\def\cT{\ensuremath{\mathcal T}}
\def\cV{\ensuremath{\mathcal V}}
\def\aa{\ensuremath{\mathbf a}}
\def\ss{\ensuremath{\mathbf s}}
\def\vv{\ensuremath{\mathbf v}}
\def\ww{\ensuremath{\mathbf w}}
\def\Halg{H^*_{\alg}(X, \Z)}
\def\Zab{Z_{\alpha, \beta}}
\def\sab{\sigma_{\alpha, \beta}}
\def\osigma{\overline{\sigma}}
\begin{document}

\title[Wall-crossing implies Brill-Noether]{Wall-crossing implies Brill-Noether \\ Applications of stability conditions on surfaces}

\author{Arend Bayer}
\address{School of Mathematics and Maxwell Institute,
University of Edinburgh,
James Clerk Maxwell Building,
Peter Guthrie Tait Road, Edinburgh, EH9 3FD,
United Kingdom}
\email{arend.bayer@ed.ac.uk}
\urladdr{http://www.maths.ed.ac.uk/~abayer/}


\begin{abstract}
Over the last few years, wall-crossing for Bridgeland stability conditions has led to a large number of
results in algebraic geometry, particular on birational geometry of moduli spaces.

We illustrate some of the methods behind these result by reproving
Lazarsfeld's Brill-Noether theorem for curves on K3 surfaces via wall-crossing. We conclude with a survey of
recent applications of stability conditions on surfaces.

The intended reader is an algebraic geometer with a limited working knowledge of derived categories.
This article is based on the author's talk at the AMS Summer Institute on Algebraic Geometry in Utah, July 2015.
\end{abstract}


\maketitle
\setcounter{tocdepth}{1}
\tableofcontents

\section{Introduction}

Merely following the logic of wall-crossing
naturally leads one to reprove Lazarsfeld's Brill-Noether theorem for curves on K3 surfaces. I hope
that explaining this proof will serve to illustrate the methods underlying many of the recent applications of
wall-crossing for Bridgeland stability conditions on surfaces, in particular
to the birational geometry of moduli spaces of sheaves.


To state our concrete goal, let $(X, H)$ be a smooth polarised K3 surface.
\begin{description}
\item[Assumption (*)] $H^2$ divides $H.D$ for all curve classes $D$ on $X$.
\end{description}

\begin{Thm}[\cite{Lazarsfeld:BN-Petri}] \label{thm:BN}
Let $(X, H)$ be a polarised K3 surface satisfying Assumption (*).
Let $C$ be any smooth curve in the linear system
$\abs{H}$. Then the Brill-Noether variety $W^r_d(C)$ has expected dimension $\rho(r, g, d)$; in
particular, it is empty if and only if $\rho(r, g, d) < 0$.
\end{Thm}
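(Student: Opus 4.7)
The plan is to translate Lazarsfeld's argument into the language of Bridgeland stability conditions and Mukai vectors, where the Bogomolov-type bound $\langle v,v\rangle \ge -2$ for semistable objects on a K3 surface does most of the work.

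First, to each $A \in W^r_d(C)$ with $h^0(A) = r+1$ I associate the Lazarsfeld--Mukai sheaf $F = F_{C,A}$ fitting in
\[
0 \to F \to H^0(C,A) \otimes \cO_X \to i_* A \to 0,
\]
with $i \colon C \hookrightarrow X$. A Riemann--Roch computation yields the Mukai vector $v(F) = (r+1,\, -H,\, g+r-d)$, from which
\[
\langle v(F),\, v(F) \rangle \;=\; H^2 - 2(r+1)(g+r-d) \;=\; 2\rho(r,g,d) - 2.
\]
(If $h^0(A) > r+1$, I replace $H^0(A)$ by a generic $(r+1)$-dimensional subspace; for generic $A$ the evaluation is still surjective.)

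The heart of the argument is to show that $F$ is Bridgeland semistable with respect to some $\sab$. My approach: (i) $F$ is torsion-free and $H$-slope-semistable, hence $\sab$-semistable in the large-volume chamber of the $(\alpha,\beta)$-plane; (ii) any later destabilization would occur at a wall, producing a short exact sequence of Mukai vectors $0 \to v' \to v(F) \to v'' \to 0$ in which $v'$ satisfies the usual wall-crossing numerical constraints (proportional central charges, nonnegative Bogomolov discriminants for $v'$ and $v''$); (iii) those constraints translate into an equation for $H \cdot c_1(v')$ as a fixed rational multiple of $H^2$, determined by the ranks and the position of the wall, and assumption $(*)$ -- namely $H^2 \mid H \cdot D$ for every curve class $D$ -- excludes every such multiple. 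Hence there are no walls for $v(F)$, and $\sab$-semistability propagates to the entire $\Stab(X)$.

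Once $F$ is $\sab$-semistable, the Bogomolov inequality $\langle v(F), v(F)\rangle \ge -2$ gives $\rho \ge 0$, which is the emptiness assertion. For the sharp dimension claim, I use that the moduli space $M_{\sab}(v(F))$ is, whenever non-empty, a smooth holomorphic symplectic variety of dimension $\langle v(F), v(F)\rangle + 2 = 2\rho$. The assignment $(C,A) \mapsto F_{C,A}$ sends the universal Brill--Noether variety $\bigcup_{C \in |H|} W^r_d(C)$ into $M_{\sab}(v(F))$; a general fibre records the quotient $F \to i_*A$ and thereby recovers both $C$ and $A$, so subtracting $\dim |H| = g$ yields $\dim W^r_d(C) = \rho$. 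The main obstacle is step (iii): the wall-ruling-out step. Everything else is either formal Riemann--Roch, a one-line invocation of $\langle v,v\rangle \ge -2$, or the standard dimension formula for Bridgeland moduli on K3s. The real content of the wall-crossing viewpoint lies in pinpointing exactly which numerical subvectors $v'$ can conceivably destabilise $v(F)$, and then recognising the divisibility hypothesis $(*)$ as the precise arithmetic needed to exclude all of them.
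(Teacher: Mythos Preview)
Your proposal has two substantial gaps.

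\textbf{Semistability versus stability, and the claimed absence of walls.} The inequality $\langle v, v\rangle \ge -2$ requires a $\sigma$-\emph{stable} object of class $v$, not merely a semistable one (e.g.\ $\cO_X^{\oplus 2}$ is semistable with $v^2 = -8$). You assert slope-semistability of $F$ in (i) without proof; more seriously, your step (iii) claiming that assumption \star{} excludes all walls is incorrect. There \emph{is} a wall where $Z(\cO_X)$ aligns with $Z(v(F))$: since $v(F) = (r+1)\,v(\cO_X) - \vv$ with $\vv = (0,H,d+1-g)$, this is exactly the wall the paper analyses in Lemma~\ref{lem:firstwall}. At that wall the candidate subfactor $v' = v(\cO_X)$ has $H \cdot c_1(v') = 0$, an integer multiple of $H^2$, so your divisibility argument cannot exclude it; and indeed $\Hom(F,\cO_X) \neq 0$. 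The paper avoids this difficulty by working with the torsion sheaf $L = i_* A$ rather than with $F$: then $L$ is trivially stable, the class $\vv$ has minimal positive imaginary part along $\beta = 0$ so Lemma~\ref{lem:nowallshere} applies directly, and the object $W \cong F[1]$ arises as the \emph{quotient} at the first wall, where its stability follows from a short $\Hom$-vanishing argument (Lemma~\ref{lem:Wstable}) rather than from a global absence of walls.

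\textbf{The dimension count.} The map $(C, A) \mapsto F_{C,A}$ is not generically finite: its fibre over $F$ is $\Gr\bigl(r+1, H^0(F^\vee)\bigr)$, of dimension $(r+1)(r+g-d) > 0$. The correct computation gives $\dim \bigcup_C V^r_d(C) = 2\rho + (r+1)(r+g-d) = g + \rho$ (this is Corollary~\ref{cor:dimVrdH}), and subtracting $g$ then yields $\rho$ only for the \emph{generic} fibre over $\abs{H}$. The theorem, however, asserts the result for \emph{every} smooth $C$. The paper supplies this in Section~\ref{sect:conclusion} via an additional symplectic argument: the image of each $V^r_d(C)$ in the $2\rho$-dimensional moduli space $M_{\bar\sigma}^{\st}(\ww_r)$ is isotropic (Lemma~\ref{lem:pullback}), hence of dimension at most $\rho$, and combining this upper bound with the family dimension forces equality fibrewise. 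Your proposal has no analogue of this step.
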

Here $g$ is the genus of $C$, $\rho(r, d, g)$ is the Brill-Noether number,
and $W^r_d(C)$ denotes the variety of globally generated degree $d$ line bundles $L$ on $C$
with at least $r+1$ global sections;
see Section \ref{sect:BNintro} for more details. 
This, of course, is closely related to the space of morphisms $C \to \P^r$, and
thus Lazarsfeld's theorem answers one of the most basic questions about the projective geometry of
$C$. The corresponding statements for arbitrary generic curves was famously proved by degeneration 
in \cite{Griffiths-Harris:Brill-Noether}; Lazarsfeld's proof instead uses vector bundles on the K3
surface.

Our theorem is in fact a bit more precise than the statements proved in
\cite{Lazarsfeld:BN-Petri}: we show that $W^r_d(C)$ has expected dimension for \emph{every} curve
$C \in \abs{H}$, while Lazarsfeld's methods only yield this result for \emph{generic} such
$C$.\footnote{I am grateful to Gavril Farkas for explaining to me that this stronger result can,
for a sufficiently general K3 surface, also be proved by degeneration methods. The idea is similar to
the proof of \cite[Theorem 4.4]{explicit-BNP-general}. We degenerate to an elliptic K3 surface
where $H$ admits an effective decomposition $H = R + gE$ with $R^2 = -2, R.E = 1$ and $E^2 = 0$.
Since $h^0(\cO_X(H)) = g$, 
any curve in $\abs{H}$ is a
union of the rational curve $R$ with $g$ elliptic curves of class $E$. Using semistable reduction,
we see that any curve in $\abs{H}$ on our original K3 surface degenerates to a curve obtained from a
rational curve in $\overline{M}_{0, g}$ by attaching an elliptic tail at each marked point. Any linear series
degenerates to a cuspidal linear series on the corresponding rational curve. By the results of
\cite{Eisenbud-Harris:cuspidal-rational}, the Brill-Noether varieties of cuspidal linear series have
expected dimension for \emph{every} curve in $\overline{M}_{0, g}$.}
We discuss the differences between the methods presented here and those of
\cite{Lazarsfeld:BN-Petri} in more detail at the end of Section \ref{sect:firstwall}.

We will prove Theorem \ref{thm:BN} as a consequence of Theorem \ref{thm:VrdC}, which also allows for
singular curves and pure torsion sheaves; see the discussion at the end of Section
\ref{sect:BNintro}.

\subsection*{Background}
Over the last few years, stability conditions and wall-crossing have produced many results
in birational geometry completely unrelated to derived categories; we conclude this article with a
survey of such results.
While this development may have come as
a surprise to many, myself included, it is, as often, quite a logical 
development in hindsight---as well as perhaps in the foresight of a few, more on that below.

There are many famous conjectures (due to Bondal, Orlov, Kawamata, Katzarkov, Kuznetsov and others)
predicting precise relations
between the derived category of a variety and its birational geometry. But below the surface,
wall-crossing is much closer connected to vector bundle techniques as
introduced in the 1980s, and as used in Lazarsfeld's proof. I hope that this direct comparison
will illuminate the additional insights coming from the derived category, stability conditions and wall-crossing.

\subsection*{Intended reader} 
I assume that the reader is an algebraic geometer with a passing
familiarity of basic facts about the bounded derived category $\Db(X) = \Db(\Coh X)$ of coherent sheaves on smooth
projective varieties $X$; for references, the reader may consult \cite[Chapter
10]{Weibel:homological} or \cite[Chapters 1--2]{Huybrechts:FM}.

\subsection*{Omissions and apologies}
This survey does not say anything on stability conditions on higher-dimensional varieties. It is
also ignorant of applications of wall-crossing and stability conditions to Donaldson-Thomas theory
(see \cite{Yukinobu:DTsurvey}), to
the derived category itself (as e.g.~in \cite{K3Pic1}), and of connections to mirror symmetry (see \cite{Bridgeland:spaces}). 

The survey would also like to apologise for not actually giving a definition of stability
conditions (instead it only describes the construction of some stability conditions on a K3 surface).
We refer the interested reader to the original articles \cite{Bridgeland:Stab, Bridgeland:K3}, or to
\cite{Daniel:intro-stability, stability-tour, Emolo-Benjamin:lecture-notes} for surveys.

\subsection*{Acknowledgements} Such a survey may be the right place to try to appropriately
thank Aaron Bertram, who stubbornly convinced me
and others of the power of wall-crossing for questions in birational geometry, and whose foresight
influenced my approach to the topic to great extent. Of course, I am also very much indebted to Emanuele
Macr{\`{\i}}---this article is directly inspired by our joint work, and 
greatly benefitted from a number of additional conversations with him. I am also grateful for comments
by Izzet Coskun, Gavril Farkas, Soheyla Feyzbakhsh, Davesh Maulik and Kota Yoshioka.

The author was supported by ERC starting grant no.~337039 ``WallXBirGeom''.

\subsection*{Plan of the paper}

Sections \ref{sect:heart}, \ref{sect:geomstability} and \ref{sect:modulispaces} review properties of
stability conditions on K3 surfaces and moduli space of stable objects; the key results are
Proposition \ref{prop:Cohbeta}, Theorem \ref{thm:Bridgelandmain} and Theorem \ref{thm:dimmodspace}.
Section \ref{sect:BNintro} recalls the basics about Brill-Noether for curves in K3 surfaces and the
associated moduli space of torsion sheaves. The proof of Theorem \ref{thm:BN} is contained in
Sections \ref{sect:firstwall} and \ref{sect:conclusion}. Section \ref{sect:biratBN} reinterprets
the results as a statement of the birational geometry of the moduli space of torsion sheaves.
Section \ref{sect:biratgeneral} systematically reviews results on birational geometry of moduli
space obtained via wall-crossing, as well as other applications of stability conditions on surfaces.

\section{The heart of the matter}
\label{sect:heart}

The key derived category technique that we need is the construction of a certain abelian
subcategory $\Coh^\beta X \subset \Db(X)$ of two-term complexes, see Proposition \ref{prop:Cohbeta}.
More technically, we construct a \emph{bounded t-structure} which has $\Coh^\beta X$ as its
\emph{heart}.
In addition to $H$, it depends on a choice of real number $\beta$.

We recall the slope of a coherent sheaf $E$, for later convenience shifted
by $\beta \in \R$:
\[ \mu_\beta(E) := \begin{cases} 
\frac{H.c_1(E)}{H^2 \rk(E)} - \beta & \text{if $\rk(E) > 0$,} \\
+\infty & \text{otherwise.} \end{cases}
\]
The following slight modification (which I learned from Yukinobu Toda) of the definition of slope-stability implicitly accounts
correctly for torsion sheaves:
\begin{Def} We say that $E \in \Coh X$ is slope-(semi)stable if for all subsheaves $A \subset E$,
we have 
$\mu_\beta(A) < (\leq) \ \mu_\beta(E/A)$. 
\end{Def}

Every sheaf $E$ has a (unique and functorial\footnote{Given $\mu
\in \R$, let $i$ be maximal such that $\mu_\beta(E_i/E_{i-1}) > \mu$, and set $E^\mu := E_i$. Then
the assignment $E \mapsto E^\mu$ is functorial. In particular, the sheaves $T(E)$ and
$F(E)$ occurring in
Proposition \ref{prop:torsionpair} depend functorially on $E$.}) Harder-Narasimhan (HN) filtration: a sequence 
$  0 = E_0 \into E_1 \into \dots \into E_m = E $
of coherent sheaves where $E_i/E_{i-1}$ is slope-semistable for $1 \le i \le m$, and with 
\[
\mu_\beta^+(E) := \mu_\beta\left(E_1/E_0\right) > \mu_\beta\left(E_2/E_1\right) > \dots
>\mu_\beta^-(E) :=  \mu_\beta\left(E_m/E_{m-1}\right).
\]
Moreover, if $E, F$ are slope-semistable with $\mu_\beta(E) > \mu_\beta(F)$, then $\Hom(E, F) = 0$.

We use the existence of HN filtrations to break the abelian category of coherent sheaves
into two pieces $T^\beta, F^\beta \subset \Coh X$:
\begin{align*}
T^\beta
& = \stv{\cT}{\mu_\beta^-(E) > 0}
= \stv{\cT}{\text{all HN-factors of $\cT$ satisfy $\mu_\beta(\blank) > 0$} \vphantom{\mu_\beta^-} } \\
& = \stv{\cT}{\text{all quotients $\cT \onto \cE$ satisfy $\mu_\beta(\cE) > 0$}\vphantom{\mu_\beta^-} } \\
& = \left\langle \cT \colon \text{$\cT$ is slope-semistable with $\mu_\beta(\cT) >
0$}\vphantom{\mu_\beta^-}  \right\rangle \\
F^\beta
& = \stv{\cT}{\mu_\beta^+(E) \leq 0}
= \stv{\cT}{\text{all HN-factors of $\cT$ satisfy $\mu_\beta(\blank) \le 0$}\vphantom{\mu_\beta^-} } \\
& = \stv{\cT}{\text{all subobjects $\cA \into \cT$ satisfy $\mu_\beta(\cA) \le 0$}\vphantom{\mu_\beta^-} } \\
& = \left\langle \cT \colon \text{$\cT$ is slope-semistable with $\mu_\beta(\cT) \le 0$} \vphantom{\mu_\beta^-} \right\rangle \\
\end{align*}
Here $\langle \cdot \rangle$ denotes the extension-closure, i.e., the smallest subcategory of $\Coh X$ 
containing the given objects and closed under extensions. The equivalence of the above formulations
follows from the existence of Harder-Narasimhan filtrations, and the $\Hom$-vanishing between
stable objects mentioned above. 

The formal properties of this
pair of subcategories can be summarised as follows:

\begin{Prop}  \label{prop:torsionpair}
 The pair $(T^\beta, F^\beta)$ is a torsion pair, i.e.:
\begin{enumerate}
\item For $T \in T^\beta, F \in F^\beta$, we have $\Hom(T, F) = 0$. \label{enum:Homvanishing}
\item Each $E \in \Coh X$ fits into a (unique and functorial) short exact sequence
\[ 0 \to T(E) \to E \to F(E) \to 0 \] 
with $T(E) \in T^\beta$ and $F(E) \in F^\beta$. \label{enum:ses}
\end{enumerate}
\end{Prop}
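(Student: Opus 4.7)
The plan is to deduce everything from the existence of Harder--Narasimhan filtrations and the $\Hom$-vanishing between slope-semistable sheaves with strictly decreasing slopes, both of which are recalled just before the proposition. I will prove (\ref{enum:Homvanishing}) first and then use it to establish both uniqueness and functoriality in (\ref{enum:ses}); existence in (\ref{enum:ses}) I will read off directly from the HN filtration.

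\textbf{Step 1: $\Hom$-vanishing.} Given $T \in T^\beta$, $F \in F^\beta$ and $\varphi \colon T \to F$, I would argue by double induction on the lengths of the HN filtrations of $T$ and $F$. In the base case $T$ and $F$ are themselves slope-semistable, with $\mu_\beta(T) > 0 \ge \mu_\beta(F)$, so the recalled $\Hom$-vanishing gives $\varphi = 0$. For the inductive step on $F$, let $0 \subset F_1 \subset \dots \subset F_n = F$ be the HN filtration. The composition $T \to F \twoheadrightarrow F/F_{n-1}$ vanishes, since $F/F_{n-1}$ is slope-semistable with $\mu_\beta(F/F_{n-1}) \le 0$ while all HN factors of $T$ have positive slope (apply the base case to each factor). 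Hence $\varphi$ factors through $F_{n-1} \in F^\beta$, and induction on $n$ finishes. The induction on the HN length of $T$ is symmetric: $\varphi|_{T_{m-1}} = 0$ by induction, so $\varphi$ factors through the semistable quotient $T/T_{m-1}$, reducing to the previous case.

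\textbf{Step 2: Existence in (\ref{enum:ses}).} Let $0 = E_0 \subset E_1 \subset \dots \subset E_m = E$ be the HN filtration of $E$, and let $i$ be maximal with $\mu_\beta(E_i/E_{i-1}) > 0$ (taking $i = 0$ if no such index exists). Set $T(E) := E_i$ and $F(E) := E/E_i$. The HN factors of $T(E)$ are $E_1/E_0, \dots, E_i/E_{i-1}$, all of positive slope, so $T(E) \in T^\beta$; dually, the HN filtration of $F(E)$ consists of factors of non-positive slope, so $F(E) \in F^\beta$. This yields the required short exact sequence.

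\textbf{Step 3: Uniqueness and functoriality.} For any other exact sequence $0 \to T' \to E \to F' \to 0$ with $T' \in T^\beta$ and $F' \in F^\beta$, the composition $T(E) \hookrightarrow E \twoheadrightarrow F'$ lies in $\Hom(T(E), F')$, which vanishes by Step 1; hence $T(E) \subseteq T'$ as subobjects of $E$. The symmetric argument gives $T' \subseteq T(E)$, so $T' = T(E)$ and consequently $F' = F(E)$. For functoriality, a morphism $g \colon E \to E'$ induces $g \colon T(E) \to E'$, and the composition $T(E) \to E' \twoheadrightarrow F(E')$ vanishes by Step 1, so $g$ restricts to a well-defined map $T(g) \colon T(E) \to T(E')$, and passing to quotients gives $F(g) \colon F(E) \to F(E')$. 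Compatibility with composition and identities is immediate.

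\textbf{Main obstacle.} There is no deep obstacle; the content of the proposition is a general fact about torsion pairs arising from slope functions via HN filtrations. The only place that requires genuine care is the $\Hom$-vanishing in Step 1, where one has to be disciplined about devissage through both HN filtrations rather than invoking semistability only once---and this is why the author presents the three equivalent descriptions of $T^\beta$ and $F^\beta$ (as HN-factor conditions, as quotient/subobject conditions, and as extension closures), since the extension-closure description is exactly what makes the inductive $\Hom$-vanishing transparent.
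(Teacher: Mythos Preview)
Your proof is correct, and for part (\ref{enum:ses}) it is essentially identical to the paper's (the paper defines $T(E) = E_i$ for the same index $i$ and leaves uniqueness and functoriality implicit, whereas you spell them out). For part (\ref{enum:Homvanishing}), however, the paper takes a shorter route: rather than running a double induction on HN lengths, it simply observes that for a nonzero $f \colon T \to F$, the image $\im f$ is at once a quotient of $T$ (hence $\mu_\beta(\im f) > 0$ by the quotient description of $T^\beta$) and a subobject of $F$ (hence $\mu_\beta(\im f) \le 0$ by the subobject description of $F^\beta$), a contradiction. This one-line argument exploits the equivalent characterisations of $T^\beta$ and $F^\beta$ more directly than your d\'evissage; your inductive approach works just as well but is more laborious, and your ``Main obstacle'' paragraph slightly misdiagnoses where the content lies---the extension-closure description is not really needed once one has the quotient/subobject descriptions at hand.
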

\begin{proof}
Given a non-zero element $f \in \Hom(T, F)$, we have a surjection $T \onto \im f$ and therefore
$\mu_\beta(\im f) > 0$; but we also have an injection $\im f \into F(E)$ and therefore
$\mu_\beta(\im f) \le 0$. This contradiction proves (\ref{enum:Homvanishing}).

As for (\ref{enum:ses}), consider the HN filtration of $E$, and let $i$ be maximal such that
$\mu_\beta(E_i/E_{i-1}) > 0$; then $T(E) := E_i$ satisfies the claim.
\end{proof}

For us, the most important result on derived categories is the following Proposition; thereafter, all our
arguments will live completely within the newly constructed abelian category.
\begin{Prop}[\cite{Bridgeland:K3, Happel-al:tilting}] \label{prop:Cohbeta}
The following (equivalent) characterisations define an abelian subcategory of $\Db(X)$:
\begin{align*}
\Coh^\beta X
& =  \left\langle T^\beta, F^\beta[1] \right\rangle  \\
& = \stv{E}{H^{0}(E) \in T^\beta, H^{-1}(E) \in F^\beta, H^i(E) = 0 \ \text{for $i \neq 0, -1$}} \\
& = \stv{E}{E \cong F^{-1} \xrightarrow{d} F^0, \ker d \in F^\beta, \cok d \in T^\beta}
\end{align*}
\end{Prop}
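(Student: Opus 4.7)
The plan is to first verify that the three descriptions coincide, and then deduce that $\Coh^\beta X$ is abelian by invoking the tilting theorem of Happel--Reiten--Smal\o{} cited in the statement.

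The equivalence of the second and third characterisations is essentially formal: any object of $\Db(X)$ with cohomology supported only in degrees $-1$ and $0$ admits a representative as a two-term complex $F^{-1} \xrightarrow{d} F^0$ (for instance by forming the cone of a surjection from a locally free sheaf onto $H^0(E)$), and the cohomology of such a complex is $\ker d$ in degree $-1$ and $\cok d$ in degree $0$. For the equivalence with the first description, I argue in both directions. If $E$ fits into a distinguished triangle $F[1] \to E \to T \to F[2]$ with $T \in T^\beta$ and $F \in F^\beta$, the long exact sequence of standard cohomology immediately gives $H^{-1}(E) = F$, $H^0(E) = T$, and $H^i(E) = 0$ otherwise. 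Conversely, if $E$ has cohomology only in degrees $-1, 0$, the canonical truncation distinguished triangle $\tau^{\le -1}E \to E \to \tau^{\ge 0}E$ takes the form $H^{-1}(E)[1] \to E \to H^0(E)$, exhibiting $E$ as an element of the extension-closure $\langle T^\beta, F^\beta[1] \rangle$.

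For abelianness, the cleanest route is to apply the tilting theorem: given any torsion pair $(\cT, \cF)$ in the heart of a bounded $t$-structure on a triangulated category $\cD$, the extension closure $\langle \cT, \cF[1] \rangle$ is itself the heart of a new bounded $t$-structure on $\cD$ (the \emph{tilted} $t$-structure), and in particular is an abelian subcategory of $\cD$. Proposition \ref{prop:torsionpair} has already established that $(T^\beta, F^\beta)$ is a torsion pair in $\Coh X$, which is the heart of the standard $t$-structure on $\Db(X)$, so the conclusion follows directly.

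The main work concealed in that citation is the construction of kernels and cokernels in $\Coh^\beta X$ satisfying the correct universal property, and the verification that image and coimage coincide. If I wanted to verify this by hand rather than cite the result, I would form the mapping cone of a morphism $\phi \colon E \to F$ in $\Db(X)$, examine the standard cohomology sheaves of the cone, and use the $\Hom$-vanishing $\Hom(T^\beta, F^\beta) = 0$ together with the torsion-pair decomposition of those cohomology sheaves to extract the kernel and cokernel as objects of $\Coh^\beta X$. This bookkeeping is the main technical obstacle, but it is entirely routine once the torsion pair is in hand, which is why I would prefer to invoke the general tilting theorem.
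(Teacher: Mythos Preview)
Your proposal is correct and is precisely the intended argument: the paper does not actually give a proof of this proposition, but instead cites \cite{Bridgeland:K3, Happel-al:tilting} and moves on to discuss intuition, so your invocation of the Happel--Reiten--Smal\o{} tilting theorem is exactly what the citation points to. Your verification of the equivalence of the three descriptions (via the truncation triangle and a two-term representative) is the standard bookkeeping that the paper leaves implicit.
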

Rather than giving a proof, I will try to convey some intuition for the behaviour of this
abelian category. To begin with, short
exact sequences in $\Coh^\beta X$ are exactly those exact triangles
$A \to E \to B \to A[1]$ in $\Db(X)$ for which all of $A, E, B$ are in $\Coh^\beta X$; then
$A$ is the subobject, and $B$ is the quotient. In particular, every object $E \in \Coh^\beta$ fits
into a short exact sequence 
\begin{equation} \label{eqn:obviousses}
H^{-1}(E)[1] \into E \onto H^0(E).
\end{equation}
The isomorphism class of $E$ is determined by the extension class in $\Ext^1(H^0(E), H^{-1}(E)[1]) = \Ext^2(H^0(E), H^{-1}(E))$.

Every short exact sequence 
in $\Coh^\beta X$ gives a six-term long exact sequence in cohomology (with respect to $\Coh X$)
\begin{equation} \label{eqn:longexact}
 0 \to H^{-1}(A) \to H^{-1}(E) \to H^{-1}(B) \to H^0(A) \to H^0(E) \to H^0(B) \to 0
\end{equation}
with $H^{-1}(\blank) \in F^\beta$ and $H^0(\blank) \in T^\beta$.

The following observation already illustrates how closely the abelian category $\Coh^\beta X$ is
related to classical vector bundle techniques.
\begin{Prop} \label{prop:subobject}
Let $E \in T^\beta$, considered as an object of $\Coh^\beta X$. To give a subobject $A \into E$ of
$E$ (with respect to the abelian category $\Coh^\beta X$) is equivalent of giving a sheaf
$A \in T^\beta$ with a map $f \colon A \to E$ whose kernel (as a map of coherent
sheaves) satisfies $\ker f \in F^\beta$.
\end{Prop}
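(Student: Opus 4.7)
The plan is to check the two directions of the equivalence separately, using the long exact sequence (\ref{eqn:longexact}) as the principal tool, together with the general fact (which follows from $\Coh^\beta X$ being the heart of a t-structure) that a distinguished triangle $A \to E \to B \to A[1]$ in $\Db(X)$ corresponds to a short exact sequence in $\Coh^\beta X$ if and only if all three vertices $A, E, B$ lie in $\Coh^\beta X$.

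For the forward direction, suppose $A \hookrightarrow E$ is a subobject in $\Coh^\beta X$, with quotient $B \in \Coh^\beta X$. Apply (\ref{eqn:longexact}) and use that $E \in T^\beta \subset \Coh X$, so $H^{-1}(E) = 0$ and $H^0(E) = E$. The sequence collapses to
\[ 0 \to H^{-1}(A) \to 0 \to H^{-1}(B) \to H^0(A) \to E \to H^0(B) \to 0, \]
forcing $H^{-1}(A) = 0$ (so $A = H^0(A) \in T^\beta$ is an honest sheaf) and identifying the kernel of the underlying sheaf map $f \colon A \to E$ with $H^{-1}(B)$, which lies in $F^\beta$ by Proposition \ref{prop:Cohbeta}.

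For the reverse direction, given $A \in T^\beta$ and $f \colon A \to E$ with $\ker f \in F^\beta$, set $B := \cone(f) \in \Db(X)$. Because $A$ and $E$ are sheaves, $B$ is concentrated in degrees $-1, 0$, with $H^{-1}(B) = \ker f$ and $H^0(B) = \cok f$. By hypothesis $H^{-1}(B) \in F^\beta$, and since $\cok f$ is a quotient of $E \in T^\beta$, it lies in $T^\beta$ as well—this uses that $T^\beta$ is closed under quotients, which is immediate from the characterization $T^\beta = \{\cT : \mu_\beta^-(\cT) > 0\}$ and the standard inequality $\mu_\beta^-(Q) \ge \mu_\beta^-(E)$ for a quotient $E \onto Q$. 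Thus $B \in \Coh^\beta X$, the triangle becomes a short exact sequence in $\Coh^\beta X$, and $A \hookrightarrow E$ is the desired subobject.

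The only real subtlety is conceptual rather than computational: one needs to be comfortable translating between (i) subobjects in the tilted heart, (ii) distinguished triangles with all vertices in the heart, and (iii) the cohomology-sheaf description of $\Coh^\beta X$. Once this dictionary is in hand, both directions reduce to reading off the six-term sequence (\ref{eqn:longexact}) and checking the elementary closure property of $T^\beta$ under quotients.
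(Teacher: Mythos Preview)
Your proof is correct and follows essentially the same approach as the paper: both directions use the long exact cohomology sequence \eqref{eqn:longexact} for the forward implication, and the cone construction together with the closure of $T^\beta$ under quotients for the converse. The only cosmetic difference is that you (correctly) observe $\cok f$ is a quotient of $E$, whereas the paper phrases it as a quotient of $A$; since both $A$ and $E$ lie in $T^\beta$, either justification works.
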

\begin{proof}
Given a subobject $A \into E$, consider the associated long exact cohomology sequence
\eqref{eqn:longexact}. We immediately see that $H^{-1}(A) = 0$, and therefore $A = H^0(A)$ is a
sheaf. The map $f \colon A \to E$, considered as a map of coherent sheaves, has kernel
$\ker f = H^{-1}(B) \in F^\beta$.

Conversely, assume we are given a map $f \colon A \to E$ as specified. Let $B$ be the cone of $f$,
which is the two-term complex with $B^{-1} = A, B^0 = E$, and the differential given by $f$. Then
there is an exact triangle $A \xrightarrow{f} E \to B$. By assumption, $H^{-1}(B) = \ker f \in
F^\beta$; on the other hand, $H^0(B)$ is a quotient of $A \in T^\beta$, and therefore is also in 
$T^\beta$. This shows that $B \in \Coh^\beta X$; hence $A \to E \to B$ is a short exact sequence
and $f$ is injective as a map in $\Coh^\beta X$.
\end{proof}

We conclude this section with a tangential observation on $\Coh^\beta X$. One of the
features of the derived category is that cohomology classes of coherent sheaves become morphisms:
\[ \gamma \in H^k(X, G) = \Hom(\cO_X, G[k]). \]
However, this feature is only useful with additional structures on $\Db(X)$; the abelian category
$\Coh^\beta X$ can precisely play this role. For example, if $k = 1$,
$\beta < 0$ (hence $\cO_X \in T^\beta \subset \Coh^\beta X$) and $G \in F^\beta$, then $\gamma$
becomes a morphism $\cO_X \to G[1]$ in the abelian category $\Coh^\beta X$. This immediately gives additional
methods: one can consider the image of $\gamma$, or one can
try to deduce its vanishing from stability;  see \cite{AB:Reider} for an example of this type of
argument, in this case reproving Reider's theorem for adjoint bundles on surfaces. If instead $k = 2$, then 
$\gamma$ becomes an \emph{extension} $\Ext^1(\cO_X, G[1])$ between two objects within the
same abelian category, and thus produces a corresponding object in $\Coh^\beta X$.
One can, for example, try to determine stability of this object (or study its HN
filtration when it is unstable); see \cite{BBMT:Fujita} for a conjectural application of this idea towards
Fujita's conjecture for threefolds.

\section{Geometric stability}
\label{sect:geomstability}

The goal of this section is to fully explain the meaning of the following result:

\begin{Thm}[\cite{Bridgeland:K3}] \label{thm:Bridgelandmain}
Let $(X, H)$ be a polarised K3 surface. For each $\alpha, \beta \in \R$ with $\alpha > 0$, consider the pair
$\sigma_{\alpha, \beta} := \left(\Coh^\beta X, Z_{\alpha, \beta}\right)$ with $\Coh^\beta X$ as
constructed in Proposition \ref{prop:Cohbeta}, and with $Z_{\alpha, \beta} \colon K(\Db(X)) \to \C$
defined by
\[
Z_{\alpha, \beta}(E) = \left \langle e^{\sqrt{-1} \alpha H + \beta H}, v(E) \right \rangle.
\]
This pair defines a Bridgeland stability condition on $\Db(X)$ if $Re Z_{\alpha, \beta}(\delta) > 0$
for all roots $\delta \in \Halg, \delta^2 = -2$ with $\rk(\delta) > 0$ and $\mu_{H, \beta}(\delta) =
0$; in particular this holds for $\alpha^2 H^2 \ge 2$. 

Moreover, the family of stability conditions $\sigma_{\alpha, \beta}$ varies continuously as
$\alpha, \beta$ vary in $\R_{>0} \times \R$.
\end{Thm}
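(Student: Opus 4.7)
The plan is to verify the standard axioms of a Bridgeland stability condition: positivity of $Z_{\alpha,\beta}$ on $\Coh^\beta X$, existence of Harder--Narasimhan filtrations, and the support property (from which local finiteness and the continuity statement will follow via Bridgeland's deformation theorem). First I would reduce positivity to slope-semistable inputs. Using the decomposition $\Coh^\beta X = \langle T^\beta, F^\beta[1] \rangle$ together with additivity of $Z_{\alpha,\beta}$ in short exact sequences, it suffices to check $Z_{\alpha,\beta}(E) \in \mathbb{H} \cup \R_{<0}$ when $E$ is either a slope-semistable sheaf in $T^\beta$ or a shift $F[1]$ with $F \in F^\beta$ slope-semistable. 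A direct expansion gives $\mathrm{Im}\,Z_{\alpha,\beta}(E) = \alpha H^2 \rk(E)\mu_\beta(E)$ when $\rk(E) > 0$ and $\mathrm{Im}\,Z_{\alpha,\beta}(E) = \alpha H.c_1(E)$ when $\rk(E) = 0$. These quantities are strictly positive except in two degenerate cases: either $E$ is a zero-dimensional sheaf, in which case $\Real Z_{\alpha,\beta}(E) = -\ch_2(E) < 0$ is immediate, or $E = F[1]$ with $F$ a torsion-free $\mu$-semistable sheaf satisfying $\mu_\beta(F) = 0$. The substantive work is to show $\Real Z_{\alpha,\beta}(F) > 0$ in this second case.

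For this I would combine the Hodge index theorem with the Bogomolov--Mukai inequality. Writing $v(F) = (r, c_1, \ch_2 + r)$, substituting $H.c_1 = \beta H^2 r$ into the real part of $Z_{\alpha,\beta}$, and rearranging, the desired inequality is equivalent to
\[
\alpha^2 H^2 r^2 + \bigl(\beta^2 H^2 r^2 - c_1^2\bigr) > -\langle v(F), v(F) \rangle.
\]
Hodge index gives $c_1^2 \le \beta^2 H^2 r^2$, so the parenthesised term is non-negative, while the Mukai refinement of Bogomolov $\langle v(F), v(F) \rangle \ge -2$ bounds the right-hand side by $2$. Hence whenever $\alpha^2 H^2 \ge 2$ and $r \ge 1$ the inequality holds, except possibly when all three bounds are simultaneously saturated. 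Chasing the saturation conditions forces $v(F)$ to be a $(-2)$-root $\delta$ with $\rk(\delta) > 0$ and $\mu_\beta(\delta) = 0$---precisely the classes for which the hypothesis of the theorem directly imposes $\Real Z_{\alpha,\beta}(\delta) > 0$.

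To finish, I would verify the support property with respect to the Mukai form $Q$: Bogomolov--Mukai gives $Q \ge 0$ on classes of semistable objects, while a direct computation shows that $Q$ is negative definite on the real kernel of $Z_{\alpha,\beta}$. The support property implies local finiteness of $Z_{\alpha,\beta}$, and combined with Noetherianity of $\Coh^\beta X$ (inherited from $\Coh X$ via the torsion pair $(T^\beta, F^\beta)$) it yields Harder--Narasimhan filtrations by Bridgeland's general argument. Continuity in $(\alpha, \beta)$ is then a formal consequence of the deformation theorem: the central charge varies analytically, the support-property form varies continuously in the parameters, and any small perturbation preserving the support property with respect to the same $Q$ lifts uniquely to a continuous family of stability conditions. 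The main obstacle is the second step: handling the $(-2)$-root classes is the K3-specific input, reflecting the existence of spherical objects whose Mukai vectors violate the strict Bogomolov bound, and it is exactly this subtlety that the explicit root hypothesis of the theorem is crafted to exclude.
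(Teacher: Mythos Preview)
Your positivity argument is essentially the paper's own (its Lemmas \ref{lem1} and \ref{lem2}): reduce to a $\mu$-semistable sheaf $F$ with $\mu_\beta(F)=0$, then combine Hodge index with the bound $v(F)^2\ge -2$. One small correction: that bound holds only for $\mu$-\emph{stable} $F$ (for $F=G^{\oplus n}$ with $G$ spherical one gets $-2n^2$), so you must first pass to stable Jordan--H\"older factors, as the paper does explicitly. Once that reduction is made, the case split is cleaner than your ``three simultaneous saturations'': since the Mukai lattice is even, either $v(F)^2\ge 0$ and your displayed inequality holds for every $\alpha>0$, or $v(F)^2=-2$ and $v(F)$ is literally one of the roots $\delta$ covered by the hypothesis---this is how the general root condition (not just the special case $\alpha^2 H^2\ge 2$) enters.

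Your support-property sketch, however, has a real gap. The Mukai form $Q(v)=\langle v,v\rangle$ is indeed negative definite on $\ker Z_{\alpha,\beta}$, but it does \emph{not} satisfy $Q\ge 0$ on Mukai vectors of semistable objects: every spherical object---$\cO_X$ itself, for instance---is $\sigma_{\alpha,\beta}$-stable with $Q=-2$. This is exactly the phenomenon you flag in your final sentence, so the naive quadratic-form argument cannot work as stated; one needs either Bridgeland's original discreteness argument or a modified form. Relatedly, tilted hearts are not in general Noetherian (and $\Coh^\beta X$ need not be for generic $\beta$), so HN filtrations also require a separate argument rather than following formally from Noetherianity of $\Coh X$. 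The paper itself proves neither of these points---it states the HN property without proof and only alludes to the support property---so your outline matches the paper's depth, but the specific justifications you offer for these last two steps would not go through as written.
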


We begin by explaining the notation. The Mukai vector
of an object $E \in \Db(X)$ given by
\[
v(E) = (v_0(E), v_1(E), v_2(E)) := \ch(E) \cdot \sqrt{\td_X}
	= \left(\ch_0(E), \ch_1(E), \ch_2(E) + \ch_0(E)\right)
\]
lies in the algebraic cohomology $\Halg$.
The pairing $\langle \blank , \blank \rangle$ is the Mukai pairing
\begin{align*}
\langle v(E), v(F) \rangle & = - \chi(E, F) = \sum_i (-1)^i \dim \Hom(E, F[i]) \\
& = v_1(E) v_1(F) - v_0(E) v_2(F) - v_2(E) v_0(F).
\end{align*}
It equips $\Halg$ with the structure of an even lattice of signature $(2, \rho(X))$, where $\rho(X)$
is the Picard rank of $X$. Roots in this lattice are classes $\delta$ with $\delta^2 = -2$.

Explicitly, the central charge $Z_{\alpha, \beta}$ is given by
\begin{align} \label{eqn:centralchargeexplicit}
Z_{\alpha, \beta}(E) =  &
\sqrt{-1} \alpha H \bigl(v_1(E) - \beta H \rk(E) \bigr) \\
& - v_2(E) + \beta H v_1(E) + \frac{\alpha^2 H^2 - \beta^2 H^2}2 v_0(E)  \nonumber
\end{align}

For a sheaf $E$, we have 
$ \Im Z_{\alpha, \beta}(E) \ge 0$ if and only if $\mu_\beta(E) \ge 0$.
Using the short exact sequence \eqref{eqn:obviousses} and $Z_{\alpha, \beta}(F[1]) = - Z_{\alpha,
\beta}(F)$, one can immediately conclude 
\begin{Lem} \label{lem1}
If $E \in \Coh^\beta X$, then $\Im Z_{\alpha, \beta}(E) \ge 0$.
\end{Lem}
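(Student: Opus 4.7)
The plan is to reduce the statement to the two stated facts: the sign of $\Im Z_{\alpha, \beta}$ on a coherent sheaf is controlled by its slope $\mu_\beta$, and $Z_{\alpha, \beta}$ is additive on short exact sequences (since it factors through $K(\Db(X))$). Together with the canonical presentation \eqref{eqn:obviousses} of an object of $\Coh^\beta X$ as an extension of its cohomology sheaves, this should immediately give the inequality.

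First I would verify the comparison between $\Im Z_{\alpha,\beta}$ and $\mu_\beta$ on a coherent sheaf $E$ directly from \eqref{eqn:centralchargeexplicit}. When $\rk(E) > 0$, one reads off
\[
\Im Z_{\alpha, \beta}(E) = \alpha H^2 \rk(E)\, \mu_\beta(E),
\]
which has the same sign as $\mu_\beta(E)$. When $\rk(E) = 0$ and $E \neq 0$, the formula simplifies to $\Im Z_{\alpha,\beta}(E) = \alpha H \cdot c_1(E) \ge 0$, since $H$ is ample and $c_1(E)$ is an effective class (possibly zero, for $0$-dimensional $E$); this handles the case $\mu_\beta(E) = +\infty$ of the definition. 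In particular, any slope-semistable sheaf with $\mu_\beta > 0$, and any torsion sheaf, satisfies $\Im Z_{\alpha,\beta} \ge 0$, while a slope-semistable sheaf with $\mu_\beta \le 0$ satisfies $\Im Z_{\alpha,\beta} \le 0$.

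Next I would apply these observations to the two terms of the short exact sequence \eqref{eqn:obviousses} in $\Coh^\beta X$,
\[
H^{-1}(E)[1] \into E \onto H^0(E).
\]
The quotient $H^0(E)$ lies in $T^\beta$, so its Harder--Narasimhan factors (with respect to $\Coh X$) are slope-semistable with $\mu_\beta > 0$, or torsion; by additivity of $\Im Z_{\alpha,\beta}$ on short exact sequences of coherent sheaves one obtains $\Im Z_{\alpha,\beta}(H^0(E)) \ge 0$. Similarly $H^{-1}(E) \in F^\beta$ has HN factors with $\mu_\beta \le 0$, so $\Im Z_{\alpha,\beta}(H^{-1}(E)) \le 0$, and the identity $Z_{\alpha,\beta}(F[1]) = -Z_{\alpha,\beta}(F)$ then gives $\Im Z_{\alpha,\beta}(H^{-1}(E)[1]) \ge 0$. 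Adding these via the additivity of $Z_{\alpha,\beta}$ on the short exact sequence \eqref{eqn:obviousses} yields $\Im Z_{\alpha,\beta}(E) \ge 0$, as required.

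There is essentially no substantive obstacle: the result is a bookkeeping translation between the tilting construction of $\Coh^\beta X$ and the formula for $Z_{\alpha,\beta}$. The only mild nuisance is the rank-zero case in the first step, where the convention $\mu_\beta = +\infty$ must be replaced by a direct appeal to the ampleness of $H$ to ensure $\Im Z_{\alpha,\beta} \ge 0$ on torsion sheaves.
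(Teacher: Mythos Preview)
Your proposal is correct and follows essentially the same approach as the paper: the paper's argument is the single sentence preceding the lemma, which invokes exactly the sign comparison $\Im Z_{\alpha,\beta}(E) \ge 0 \iff \mu_\beta(E) \ge 0$ for sheaves, the short exact sequence \eqref{eqn:obviousses}, and the identity $Z_{\alpha,\beta}(F[1]) = -Z_{\alpha,\beta}(F)$. You have simply spelled out the details (the HN filtration step and the torsion case) that the paper leaves implicit.
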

In other words, $\Im Z_{\alpha, \beta}$ behaves like a rank function on the abelian category $\Coh^\beta X$: it is a
non-negative function on its set of objects that is additive on short exact sequences. 
We want to define a notion of slope by using the real part $\Re \Zab$ as a degree:
\begin{equation} \label{eq:defnu}
E \in \Coh^\beta X \mapsto \nu_{\alpha, \beta}(E) = \frac{-\Re Z_{\alpha, \beta}(E)}{\Im \Zab(E)}.
\end{equation}
To make this well-behaved, we need one further observation:
\begin{Lem} \label{lem2}
Assume $\alpha, \beta$ satisfy the assumptions of Theorem \ref{thm:Bridgelandmain}.
If $0 \neq E \in \Coh^\beta X$ satisfies $\Im \Zab(E) = 0$, then $\Re \Zab(E) < 0$.
\end{Lem}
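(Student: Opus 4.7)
The plan is to reduce the assertion to two ``building block'' cases carved out by the condition $\Im Z_{\alpha,\beta}(E) = 0$. Using the canonical short exact sequence \eqref{eqn:obviousses} in $\Coh^\beta X$ together with Lemma \ref{lem1} and additivity of $\Im Z_{\alpha,\beta}$, the hypothesis forces $\Im Z_{\alpha,\beta}(H^{-1}(E)[1]) = 0$ and $\Im Z_{\alpha,\beta}(H^0(E)) = 0$ separately. Additivity of $\Re Z_{\alpha,\beta}$ then reduces the proof to establishing the sign statement on each of the two pieces, with strict inequality whenever that piece is nonzero.

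For $G := H^0(E) \in T^\beta$, inspecting $\Im Z_{\alpha,\beta}(G) = \alpha H \cdot \bigl(v_1(G) - \beta H \rk(G)\bigr)$ against the definition of $T^\beta$ forces the torsion-free part of $G$ to vanish and $H \cdot c_1(G) = 0$ on the torsion part; by ampleness of $H$ the sheaf $G$ is then supported in dimension zero, so $v(G) = (0,0,\ell)$ and $\Re Z_{\alpha,\beta}(G) = -\ell$, strictly negative when $G \neq 0$. For $F := H^{-1}(E) \in F^\beta$, the analogous HN analysis forces $F$ to be torsion-free and slope-semistable with $\mu_\beta(F) = 0$, so it suffices to show $\Re Z_{\alpha,\beta}(F) > 0$ whenever $F \neq 0$, giving $\Re Z_{\alpha,\beta}(F[1]) = -\Re Z_{\alpha,\beta}(F) < 0$.

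To handle $F$, I pass to Jordan--H\"older factors and reduce to $F$ being slope-stable. The relation $H v_1(F) = \beta H^2 \rk(F)$ substituted into \eqref{eqn:centralchargeexplicit} gives
\[
\Re Z_{\alpha,\beta}(F) = -v_2(F) + \tfrac{1}{2}(\alpha^2+\beta^2) H^2 \rk(F).
\]
Two standard inputs bound $v_2(F)$ from above: the Hodge index theorem applied to $v_1(F) \in \NS(X)$ gives $v_1(F)^2 \le (H v_1(F))^2 / H^2 = \beta^2 H^2 \rk(F)^2$, and stability on a K3 surface together with Serre duality forces $v(F)^2 = v_1(F)^2 - 2\rk(F) v_2(F) \ge -2$ (equivalent to $\ext^1(F,F) \ge 0$). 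Combining these under $\alpha^2 H^2 \ge 2$ yields $\Re Z_{\alpha,\beta}(F) \ge \rk(F) - \rk(F)^{-1} \ge 0$.

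The main obstacle is upgrading this to the strict inequality $\Re Z_{\alpha,\beta}(F) > 0$. Equality along the chain of estimates above forces $\rk(F) = 1$, $\alpha^2 H^2 = 2$, and simultaneous saturation in both Hodge index and the Bogomolov-type bound; this makes $v(F)$ a root $\delta$ with $\rk(\delta) > 0$, $\mu_{H,\beta}(\delta) = 0$, and $\Re Z_{\alpha,\beta}(\delta) = 0$, which is exactly the configuration excluded by the root hypothesis of Theorem \ref{thm:Bridgelandmain}. Hence $\Re Z_{\alpha,\beta}(F) > 0$ whenever $F \neq 0$, and combined with the torsion case we conclude $\Re Z_{\alpha,\beta}(E) < 0$.
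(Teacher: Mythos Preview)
Your overall strategy matches the paper's proof exactly: reduce via the short exact sequence \eqref{eqn:obviousses}, show $H^0(E)$ is zero-dimensional torsion, reduce $H^{-1}(E)$ to a slope-stable sheaf $F$ with $\mu_\beta(F)=0$, and then bound $\Re Z_{\alpha,\beta}(F)$ using $v(F)^2 \ge -2$ together with the Hodge index inequality. The handling of $H^0(E)$ and the reduction to stable $F$ are fine.

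There is, however, a genuine gap in your treatment of $F$. You fold the two inputs into the single estimate
\[
\Re Z_{\alpha,\beta}(F) \ \ge\ \frac{\alpha^2 H^2\,\rk(F)}{2} - \frac{1}{\rk(F)},
\]
and then invoke $\alpha^2 H^2 \ge 2$ to make the right-hand side non-negative. But the hypothesis of the Lemma is the \emph{root condition} of Theorem~\ref{thm:Bridgelandmain}, which is strictly weaker: for generic $\beta$ there are no roots $\delta$ with $\mu_{H,\beta}(\delta)=0$, so the root condition is vacuous and $\alpha$ can be arbitrarily small. In that regime your bound can be negative (e.g.\ $\rk(F)=1$, $\alpha^2 H^2 < 2$), and your equality analysis in the final paragraph never gets off the ground. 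The paper avoids this by separating the two cases rather than combining them: if $v(F)^2=-2$ then $v(F)$ is precisely one of the roots $\delta$ in the hypothesis, so $\Re Z_{\alpha,\beta}(F)>0$ holds by assumption; if $v(F)^2 \ge 0$ then your same computation (now without the $+2$) gives $v_2(F) \le \tfrac{1}{2}\beta^2 H^2\,\rk(F)$ and hence $\Re Z_{\alpha,\beta}(F) \ge \tfrac{1}{2}\alpha^2 H^2\,\rk(F) > 0$ for every $\alpha>0$, with no lower bound on $\alpha^2 H^2$ needed. Splitting the cases this way closes the gap and recovers the full statement.
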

\begin{proof}
The short exact sequence \eqref{eqn:obviousses} shows that
\[ \Im \Zab\left(H^{-1}(E)\right) = 0 = \Im \Zab \left(H^0(E)\right).
\]
It follows that if $H^0(E)$ is non-trivial, then it is a zero-dimensional torsion sheaf, in which
case $\Re \Zab (H^0(E)) = -\ch_2(E) < 0$. If $H^{-1}(E) \neq 0$, then it must be a slope-semistable
sheaf with $\mu_\beta \left(H^{-1}(E)\right) = 0$. It is enough to consider the case that it is
\emph{stable}. Then with $\vv := v\left(H^{-1}(E)\right)$ we have $\vv^2 \ge -2$ by
Hirzebruch-Riemann-Roch and Serre duality. If $\vv^2 = -2$,
the claim follows from our assumptions on $\alpha, \beta$. Otherwise, if $\vv^2 \ge 0$, then
\begin{align*}
\Re \Zab(\vv) = \left\langle \Re e^{\sqrt{-1}\alpha H} , e^{-\beta H} \vv \right\rangle = 
\left\langle \left(1, 0, -\frac{\alpha^2H^2}2\right), \left(r, 0, s\right) \right\rangle
=  -s + \frac{\alpha^2H^2 \cdot r}2
\end{align*}
Since $r > 0$ and $-2rs = \left(e^{-\beta H} \vv\right)^2 = \vv^2 \ge 0$ we have
\[ \Re \Zab\left(H^{-1}(E)\right) = - \Re \Zab(\vv) < 0 \]
proving the claim.
\end{proof}

This finally leads to a notion of stability for objects in $\Db(X)$: we say that $E \in \Db(X)$ is 
$\sigma_{\alpha, \beta}$-semistable if some shift $E[k]$ is contained in the abelian category
$\Coh^\beta X$, and if that object $E[k]$ is slope-semistable with respect to the slope-function
$\nu_{\alpha, \beta}$. We need one more result to conclude that $(\Coh^\beta X, \Zab)$ is a
stability condition (which we state without proof):
\begin{Prop} Any $E \in \Coh^\beta X$ admits a HN filtration: a filtration whose quotients are
$\nu_{\alpha, \beta}$-semistable objects of decreasing slopes.
\end{Prop}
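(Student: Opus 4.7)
The plan is to apply Bridgeland's standard criterion \cite[Prop.~2.4]{Bridgeland:Stab} for existence of Harder--Narasimhan filtrations in an abelian category equipped with a stability function: we need (i) that $Z_{\alpha,\beta}$ takes values in the union of the strict upper half-plane with $\R_{<0}$ on every nonzero object of $\Coh^\beta X$, and (ii) that the two chain conditions hold---no infinite sequence of strict subobjects $\cdots \into E_{j+1} \into E_j$ with strictly increasing slopes $\nu_{\alpha,\beta}$, and dually no infinite sequence of strict quotients with strictly decreasing slopes. Condition (i) is the conjunction of Lemmas \ref{lem1} and \ref{lem2}, so all the work is in (ii).

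For (ii), the first substep is to observe that when $\beta H \in \NS(X)_\Q$, the explicit formula \eqref{eqn:centralchargeexplicit} shows that $\Im Z_{\alpha,\beta}$ takes values in a discrete subgroup of $\R_{\geq 0}$. Given any descending chain of subobjects as above, the function $j \mapsto \Im Z_{\alpha,\beta}(E_j)$ is non-increasing by additivity, hence eventually constant, so each quotient $E_j/E_{j+1}$ eventually lies in the subcategory of objects with $\Im Z_{\alpha,\beta} = 0$. The second substep is to show that $\Coh^\beta X$ is Noetherian: I would do this via a tilting argument following Bridgeland, leveraging Proposition \ref{prop:Cohbeta}, the six-term sequence \eqref{eqn:longexact}, and the Noetherianness of $\Coh X$. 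Combined, these two inputs preclude infinite chains, and the dual condition on quotients follows symmetrically. With the chain conditions in hand, the HN filtration is constructed by the usual procedure: find the maximal destabilising subobject $A \into E$ (which exists by the chain conditions), verify it is $\nu_{\alpha,\beta}$-semistable, and iterate on $E/A$.

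For irrational $\beta$, discreteness of $\Im Z_{\alpha,\beta}$ fails, and I would handle this by a limiting argument: approximate $\beta$ by rational values $\beta_n \to \beta$ and use the continuity of $Z_{\alpha,\beta}$ asserted in Theorem \ref{thm:Bridgelandmain} to transport HN data, or alternatively invoke the support property to establish local finiteness of $\sigma_{\alpha,\beta}$ directly, which in Bridgeland's framework is equivalent to existence of HN filtrations.

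The main obstacle is the Noetherianness of $\Coh^\beta X$: this is a genuine derived-categorical property that does not follow formally from the torsion-pair construction of Proposition \ref{prop:Cohbeta}, and it is the technically delicate ingredient that ultimately makes the whole machinery of Bridgeland stability on surfaces work. A secondary difficulty is the passage to irrational $\beta$, where the naive discreteness argument breaks down and one must genuinely use the deformation theory of stability conditions.
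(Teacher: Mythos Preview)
The paper does not prove this Proposition; it is explicitly ``state[d] without proof'', deferring to \cite{Bridgeland:K3}. Your outline is essentially Bridgeland's original argument and is correct in its broad shape, but the logic is muddled in two places.

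First, the two chain conditions of \cite[Prop.~2.4]{Bridgeland:Stab} are not symmetric, and your assignment of which tool handles which is off. For a descending chain $\cdots \subset E_{j+1} \subset E_j$ with strictly increasing phase, once discreteness forces $\Im Z_{\alpha,\beta}(E_j)$ to stabilise at some $c \ge 0$, Lemma~\ref{lem2} gives $Z(E_j/E_{j+1}) \in \R_{<0}$, so $\Re Z(E_j)$ is strictly increasing in $j$; with $\Im Z$ fixed this forces the phases $\phi(E_j)$ to \emph{decrease} (or all equal $1$ if $c = 0$), contradicting the hypothesis. Thus condition (a) needs only discreteness and Lemma~\ref{lem2}---Noetherianness is neither used nor helpful for a descending chain. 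It is the \emph{quotient} chain, condition (b), where Noetherianness enters: the kernels of $E_1 \onto E_j$ form a strictly ascending chain in $E_1$. Noetherianness of $\Coh^\beta X$ is indeed a theorem, but it does not follow formally from tilting a Noetherian heart; the actual proof (see e.g.\ \cite{Emolo-Benjamin:lecture-notes}) again uses discreteness of $\Im Z$ together with the structure of the $\Im Z = 0$ subcategory, so your two ``substeps'' are not as cleanly separable as you suggest, and ``the dual condition follows symmetrically'' is not a valid move here.

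Second, your fallback for irrational $\beta$ via the support property is circular: the support property and local finiteness are conditions imposed on a stability condition---which by definition already has HN filtrations---not tools to manufacture HN filtrations from a heart and central charge. Your first alternative, deforming from rational $\beta$ via Bridgeland's deformation theorem, is the correct one and is what \cite{Bridgeland:K3} does; the remaining work is then to identify the heart of the deformed stability condition with $\Coh^\beta X$.
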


Finally, we need to explain what we mean by a \emph{continuous} family of stability conditions. The
technical underlying notion here is the \emph{support property}; it implies that for small variations 
of the central charge $Z$, the variation of the phases $\phi(\blank) = \frac 1\pi \cot^{-1}
\nu(\blank)$ of all semistable objects can be bounded 
simultaneously. What we need is the following consequence:

\begin{Cor} \label{cor:wallcrossing}
Given a class $\vv \in \Halg$, 
there is a chamber decomposition induced by a locally finite set of walls in $\R \times \R_{>0}$
with the following property: for objects of Mukai vector
$\vv$, being $\sab$-stable (or semistable) is independent on the choice of $(\beta, \alpha)$ in any
given chamber.
\end{Cor}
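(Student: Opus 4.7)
The plan is to identify walls as loci where a potential Jordan--H\"older factor attains the same $\sab$-slope as the class $\vv$, turn these into a real-algebraic conditions in $(\alpha,\beta)$, and then use the support property (the additional technical ingredient packaged into $\sab$) to rule out all but finitely many such candidate classes in any compact region of parameter space.

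\textbf{Numerical walls.} If $E$ with $v(E) = \vv$ changes $\sab$-stability as $(\alpha,\beta)$ varies along a path, then at the critical value $E$ must be strictly semistable, so it admits a proper subobject $A \into E$ in $\Coh^\beta X$ satisfying $\nu_{\alpha,\beta}(A) = \nu_{\alpha,\beta}(\vv)$, equivalently
\[ \Im\left(\overline{\Zab(\aa)} \cdot \Zab(\vv)\right) = 0, \]
where $\aa := v(A) \in \Halg$. Using the explicit formula \eqref{eqn:centralchargeexplicit}, for each $\aa$ not $\Q$-proportional to $\vv$ this is a single real-algebraic equation in $(\alpha,\beta)$ cutting out a \emph{numerical wall} $W_\aa \subset \R \times \R_{>0}$ (which after elementary rewriting is a line or circle). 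Any true wall for objects of class $\vv$ is contained in some $W_\aa$.

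\textbf{Main obstacle: local finiteness.} The key difficulty is showing that only finitely many $W_\aa$ meet any compact subset $K \subset \R \times \R_{>0}$. At any point of $W_\aa \cap K$ the class $\aa$ is realised by a $\sab$-semistable subobject $A$ of some $E$ of class $\vv$, so by Lemma \ref{lem1} both $\aa$ and $\vv-\aa$ have non-negative imaginary part, and the equal-slope condition places $\Zab(\aa)$ on the segment from $0$ to $\Zab(\vv)$; in particular $|\Zab(\aa)|$ is bounded uniformly on $K$. The support property then converts this bound on $|\Zab(\aa)|$ into a bound on the lattice norm of $\aa$ in $\Halg$: it supplies a quadratic form that is negative definite on $\ker \Zab$ yet satisfies $Q(\bb) \geq 0$ on every $\sab$-semistable class $\bb$. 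Only finitely many $\aa \in \Halg$ are compatible with these constraints, so $\{W_\aa\}$ is locally finite on $\R \times \R_{>0}$.

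\textbf{Constancy of stability on chambers.} Off the union of walls, no potential destabilizing subobject of a class-$\vv$ object can share its slope, and so stability and semistability coincide. Combined with the openness of (semi)stability in $(\alpha,\beta)$---a further standard consequence of the support property---this forces $\sab$-(semi)stability of class-$\vv$ objects to be locally constant on the complement of $\bigcup_\aa W_\aa$, hence constant on each connected component. The connected components of the complement are the required chambers.
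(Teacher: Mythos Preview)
The paper does not actually prove this Corollary: it is stated immediately after the paragraph introducing the support property as ``the following consequence,'' with no further argument, and the subsequent Remark~\ref{rem:wallsaslines} only discusses the \emph{shape} of the walls. Your sketch correctly unpacks the standard argument behind this black-box citation---numerical walls from equal-slope conditions, local finiteness via the support property bounding the lattice classes of potential Jordan--H\"older factors, and openness of stability on the complement---so your approach is exactly what the paper is implicitly invoking, just made explicit.

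One small point worth tightening if you want the sketch to be self-contained: the support property furnishes a quadratic form $Q$ for a \emph{single} stability condition, whereas your finiteness step needs a uniform bound as $(\alpha,\beta)$ ranges over the compact set $K$. This uniformity is part of Bridgeland's deformation result (the support property persists with a uniform constant on compact subsets of $\Stab(X)$), and is precisely what the paper's phrase ``continuous family'' is packaging; you use it implicitly when you say ``uniformly on $K$,'' but it is the one place where a reader might ask why.
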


\begin{figure}[!tbp]
 \begin{subfigure}[b]{0.4\textwidth}

\begin{tikzpicture}[line cap=round,line join=round,>=triangle 45,x=1.0cm,y=1.0cm]
\draw[->,color=black] (-4.3,0) -- (3.3,0);
\foreach \x in {-4,-3,-2,-1,1,2,3}
\draw[shift={(\x,0)},color=black] (0pt,2pt) -- (0pt,-2pt) node[below] {\footnotesize $\x$};
\draw[->,color=black] (0,-0.5) -- (0,3.3);
\foreach \y in {,1,2,3}
\draw[shift={(0,\y)},color=black] (2pt,0pt) -- (-2pt,0pt) node[left] {\footnotesize $\y$};
\draw[color=black] (0pt,-10pt) node[right] {\footnotesize $0$};
\clip(-4.3,-0.5) rectangle (3.3,3.3);
\draw [shift={(-2.5,0)}] plot[domain=0:3.141,variable=\t]({1*1.5*cos(\t r)+0*1.5*sin(\t
r)},{0*1.5*cos(\t r)+1*1.5*sin(\t r)});
\draw [shift={(-2.46,0)}] plot[domain=0:3.141,variable=\t]({1*0.92*cos(\t r)+0*0.92*sin(\t
r)},{0*0.92*cos(\t r)+1*0.92*sin(\t r)});
\draw [shift={(-2.43,0)}] plot[domain=0:3.141,variable=\t]({1*0.43*cos(\t r)+0*0.43*sin(\t
r)},{0*0.43*cos(\t r)+1*0.43*sin(\t r)});
\draw (-0.5,0) -- (-0.5,3.3);
\draw [shift={(1.5,0)}] plot[domain=0:3.141,variable=\t]({1*1.5*cos(\t r)+0*1.5*sin(\t r)},{0*1.5*cos(\t
r)+1*1.5*sin(\t r)});
\draw [shift={(1.46,0)}] plot[domain=0:3.141,variable=\t]({1*0.92*cos(\t r)+0*0.92*sin(\t
r)},{0*0.92*cos(\t r)+1*0.92*sin(\t r)});
\draw [shift={(1.43,0)}] plot[domain=0:3.141,variable=\t]({1*0.43*cos(\t r)+0*0.43*sin(\t
r)},{0*0.43*cos(\t r)+1*0.43*sin(\t r)});
\draw[color=black] (0.25,3.2) node {$\alpha$};
\draw[color=black] (3.2,0.3) node {$\beta$};
\end{tikzpicture}

\end{subfigure}
\hfill
 \begin{subfigure}[b]{0.4\textwidth}

\begin{tikzpicture}[line cap=round,line join=round,>=triangle 45,x=1.0cm,y=1.0cm]
\clip(-1.1,-0.6) rectangle (3.8,5.4);
\draw(1.36,2.64) circle (2.02cm);
\draw [domain=0.58:3.8] plot(\x,{(-2.78--4.72*\x)/0.76});
\draw [domain=0.58:3.8] plot(\x,{(-2.71--4.5*\x)/1.69});
\draw [domain=0.58:3.8] plot(\x,{(-1.82--2.96*\x)/1.72});
\draw [domain=0.58:3.8] plot(\x,{(-1.4--2.18*\x)/2.22});
\draw (0.58,-0.06) -- (0.58,5.4);
\draw [domain=-1.1:0.58] plot(\x,{(-2.39--4.19*\x)/-0.59});
\draw [domain=-1.1:0.58] plot(\x,{(-1.9--3.39*\x)/-1.12});
\fill [color=black] (1.34,4.66) circle (1.5pt);
\draw[color=black] (1.61,4.9) node {$\alpha \gg 0$};
\fill [color=black] (0.58,-0.06) circle (1.5pt);
\draw[color=black] (0.94,-0.26) node {$\vv$};
\end{tikzpicture}

\end{subfigure}
\caption{Walls as semi-circles in the upper half plane, or as lines in the projective plane}
\label{fig:wallspictures}
\end{figure}
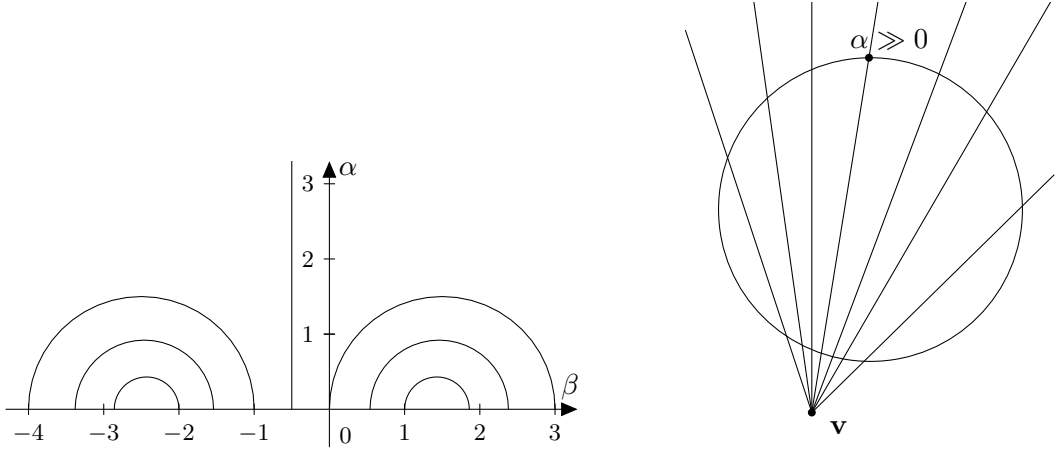

\begin{Rem} \label{rem:wallsaslines}
Locally, such walls are given by the condition that $Z(A)$ and $Z(E)$ are aligned, 
where $E$ is of class $\vv$ and $A \into E$ is a semistable subobject. 
In the $(\beta, \alpha)$-plane, this condition is given by a semi-circle. It is sometimes easier to
visualise the walls if we think of central charges as being characterised by their kernel.
For example, if $\rho(X) = 1$, then the kernel of $\Zab$ is a line inside the negative cone in $\R^3 \cong \Halg \otimes \R$;
our formula for $\Zab$ is the natural identification of the upper half plane with the
projectivization of the negative cone inside $\P^2_\R$. 
The condition that $Z(A)$ and $Z(E)$ are aligned is equivalent to the condition that the kernel is
contained in the rank two sublattice spanned by $v(A)$ and $\vv = v(E)$; thus all walls become lines
in (the image of the negative cone inside) $\P^2_\R$ going through the point corresponding to $\R
\cdot \vv$, see fig.~\ref{fig:wallspictures}. 
\end{Rem}

\section{Moduli spaces of stable objects}
\label{sect:modulispaces}

The most important result on moduli spaces of stable sheaves on K3 surfaces is that they have
expected dimension, and that they are non-empty whenever this dimension is non-negative and the Mukai vector
$\vv$ satisfies some obvious assumptions.
The same holds for moduli spaces of stable objects in the derived category (in
which case we can in fact also drop all assumptions on $\vv$):
\begin{Thm}[Mukai, Yoshioka, Toda, \dots] \label{thm:dimmodspace}
Consider a primitive vector $\vv \in \Halg$, and let $\sigma = \sigma_{\alpha, \beta}$ be a
stability condition that is generic\footnote{This means that $\sigma$ is not on any of the walls for
the wall-and-chamber decomposition described in Corollary \ref{cor:wallcrossing}.}
with respect to $\vv$. Then the coarse moduli space
$M_{\sigma}(\vv)$ of $\sab$-stable objects of Mukai vector $\vv$ exists as a smooth projective
irreducible holomorphic symplectic variety.

It is non-empty iff $\vv^2 \ge -2$, and its dimension is given by $\dim M_{\sigma}(\vv) = \vv^2 + 2$.
\end{Thm}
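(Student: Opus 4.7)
\textbf{Proof plan for Theorem \ref{thm:dimmodspace}.}
The plan is to establish the local properties (smoothness, dimension, symplectic form) uniformly via Mukai's deformation-theoretic argument for any $\sigma$-stable object on a K3, and then reduce the global properties (non-emptiness, irreducibility, projectivity) to the classical case of Gieseker-stable sheaves via the wall-and-chamber decomposition of Corollary \ref{cor:wallcrossing}. The existence of $M_\sigma(\vv)$ as a proper algebraic space follows from general results on moduli of objects in a noetherian heart of a bounded t-structure (Lieblich, Abramovich-Polishchuk, Toda), applied to $\Coh^\beta X$.

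For the local structure, fix a $\sigma$-stable $E$ of Mukai vector $\vv$. Stability forces $\Hom(E,E) = \C$, and since $K_X \cong \cO_X$, Serre duality gives $\Ext^2(E,E) \cong \Hom(E,E)^\vee = \C$. The trace map $\Ext^2(E,E) \to H^2(\cO_X) = \C$ is an isomorphism, so the traceless obstruction space $\Ext^2(E,E)_0$ vanishes; this is Mukai's original argument, which shows that the deformation theory of $E$ is unobstructed and the moduli space is smooth at $[E]$. Hirzebruch-Riemann-Roch then computes
\[
\dim_{[E]} M_\sigma(\vv) = \dim \Ext^1(E,E) = 2 - \chi(E,E) = \vv^2 + 2.
\]
Mukai's pairing on $\Ext^1(E,E)$, induced by Yoneda composition into $\Ext^2(E,E) \cong \C$, is skew-symmetric and nondegenerate, and globalises to a holomorphic symplectic form on $M_\sigma(\vv)$. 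Emptiness for $\vv^2 < -2$ is forced by $\dim \Ext^1(E,E) = \vv^2 + 2 < 0$, which is impossible.

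For non-emptiness, irreducibility, and projectivity, I would deform $\sigma$ to the large-volume region along a path in $\R \times \R_{>0}$. By Corollary \ref{cor:wallcrossing} this path meets only finitely many walls; since $\vv$ is primitive, a generic choice keeps semistability equivalent to stability on each chamber, so $M_\sigma(\vv)$ remains smooth, proper and symplectic throughout. For $\alpha \gg 0$ (with $\beta$ fixed), $\sab$-stability of an object of class $\vv$ coincides with Gieseker stability of a torsion-free sheaf, so $M_\sigma(\vv)$ agrees with the classical Gieseker moduli space, whose non-emptiness (for $\vv^2 \geq -2$) and irreducibility are Yoshioka's theorems. Across each intermediate wall, the strictly semistable locus has positive codimension, and the two adjacent moduli spaces are birational and smooth of the same dimension, so irreducibility and non-emptiness propagate. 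Projectivity in every chamber follows from the Bayer-Macri positivity lemma: each $\sab$ produces a canonical nef class $\ell_\sigma$ on $M_\sigma(\vv)$, strictly positive on curves parametrising non-isomorphic $S$-equivalent objects, and ample when $\sigma$ is generic.

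The main obstacle is projectivity: no GIT construction is available for objects in $\Db(X)$, so one cannot simply transport the classical argument. This is exactly where the positivity lemma, the deepest input of the Bayer-Macri programme, is indispensable; non-emptiness and irreducibility are then comparatively straightforward consequences of wall-crossing plus Yoshioka's sheaf-theoretic results at the large-volume limit.
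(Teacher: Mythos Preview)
Your local analysis (Mukai's deformation argument, Serre duality, Hirzebruch-Riemann-Roch) matches the paper exactly. The divergence is in how the global statements are reduced to the Gieseker case. The paper does not walk along a path through walls; instead it cites the observation of Minamide--Yanagida--Yoshioka (generalised in \cite{BM:projectivity}) that for any generic $\sigma$ there is a Fourier--Mukai transform identifying $M_\sigma(\vv)$ directly with a Gieseker moduli space of sheaves, from which projectivity, irreducibility, and Yoshioka's non-emptiness (itself proved by deformation to elliptic K3s and further FM reduction to Hilbert schemes) are inherited in one step.

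Your wall-crossing route has genuine gaps as written. First, the large-volume identification with Gieseker stability (Theorem~\ref{thm:largevolume}) only applies when $v_0 > 0$, or $v_0 = 0$ with $v_1$ effective; for arbitrary primitive $\vv$ (say of negative rank) there is no chamber in the $(\alpha,\beta)$-plane where $\sab$-stability is Gieseker stability, so your path has nowhere to land. Second, the propagation step assumes the strictly semistable locus at each wall has positive codimension, but this fails at \emph{totally semistable walls} (see the discussion in Section~\ref{sect:biratgeneral}), where every object of class $\vv$ becomes strictly semistable and the adjacent moduli spaces need not share any open subset; neither non-emptiness nor irreducibility propagates by your argument there. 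Third, the Positivity Lemma yields only a nef class that is positive on every curve; this alone does not give ampleness (Nakai--Moishezon requires positivity on higher-dimensional subvarieties), so projectivity still needs further input---in practice, again the Fourier--Mukai reduction. The FM approach bypasses all three issues simultaneously.
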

This is the deepest ingredient in our arguments, and it comes from various sources. The existence
of the moduli space as an algebraic space was proved in \cite{Toda:K3}.
An crucial observation in \cite{Minamide-Yanagida-Yoshioka:wall-crossing} (generalised to our
situation in \cite{BM:projectivity}) uses a Fourier-Mukai transform to reduce all
the statements above to the case of Gieseker-stable sheaves. 
It follows immediately from Serre duality and
Hirzebruch-Riemann-Roch that for $\vv^2 < -2$, the moduli space is empty. If it is non-empty,
Mukai's arguments in \cite{Mukai:Symplectic} shows that the moduli space is smooth and symplectic of
the given dimension. The most difficult statement is the non-emptiness for $\vv^2 \ge -2$; 
its proof uses deformation to elliptic K3 surfaces followed by Fourier-Mukai
transforms to reduce to the case of Hilbert schemes, see \cite[Theorem 8.1]{Yoshioka:Abelian} as well as \cite[Section
2.4]{KLS:SingSymplecticModuliSpaces}.

Assume for simplicity that $M_\sigma(\vv)$ is a fine moduli space, i.e. that it admits a universal
family $\cE \in \Db(M_\sigma(\vv) \times X)$; let $p, q$ denote the projections from the product to 
$M_\sigma(\vv)$ and $X$, respectively. Let $F \in \Db(X)$ be an object with $(\vv, v(F)) = 0$.
Then the determinant line bundle construction $\det$ of \cite{Knudsen-Mumford}
produces a line bundle on $M_\sigma(\vv)$ via
\[
\det \big( p_* \RlHom(\cE, p^* F) \big).
\]
\begin{Thm}[{\cite[Sections 7 and 8]{Yoshioka:Abelian}}] \label{thm:Mukaiisom}
Assume that $\vv$ is primitive with $\vv^2 > 0$, and that $\sigma$ is generic with respect to $\vv$. Then
the determinant line bundle construction induces an isomorphism
\begin{equation} \label{eq:Mukaiisom}
\theta_v \colon \vv^\perp \to \NS M_\sigma(\vv)
\end{equation}
where $\vv^\perp$ denotes the orthogonal complement of $\vv$ inside the algebraic cohomology $\Halg$.
\end{Thm}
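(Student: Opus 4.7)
My plan is threefold: first verify $\theta_\vv$ is well-defined and lands in $\NS M_\sigma(\vv)$; then reduce via a Fourier--Mukai equivalence to the case of Gieseker-stable sheaves; finally invoke Yoshioka's classical result, which deforms further to a Hilbert scheme.

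For well-definedness, the universal family $\cE$ is canonical only up to $\cE\mapsto\cE\otimes p^*L$ with $L\in\Pic M_\sigma(\vv)$, and such a twist multiplies $\det p_*\RlHom(\cE,p^*F)$ by $L^{-\chi(\cE_m,F)}=L^{(\vv,v(F))}$, which is trivial precisely on $\vv^\perp$. Additivity of $\det$ over exact triangles in $F$ gives $\Z$-linearity, and because $\cE$ is algebraic the image is contained in $\NS M_\sigma(\vv)$.

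For the reduction, the strategy of Minamide--Yanagida--Yoshioka refined in \cite{BM:projectivity}, already cited for Theorem \ref{thm:dimmodspace}, provides a Fourier--Mukai autoequivalence $\Phi\colon\Db(X)\to\Db(Y)$ carrying $\sigma$ into the Gieseker chamber for $\Phi_*\vv$, together with an isomorphism $M_\sigma(\vv)\cong M^H(\Phi_*\vv)$. Since $\Phi$ is an isometry of the Mukai lattice it identifies $\vv^\perp$ with $(\Phi_*\vv)^\perp$; since its kernel transforms the universal family on one side into that of the other, and the determinant line bundle construction is intrinsic to the derived category, $\theta_\vv$ is intertwined with $\theta_{\Phi_*\vv}$ by $\Phi$. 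This reduces the theorem to the Gieseker case. There, Yoshioka's argument deforms $(X,H,\vv)$ through a generic one-parameter family of polarised K3 surfaces to the case $\vv=(1,0,1-n)$, $n=\vv^2/2+1$, where $M^H(\vv)=\Hilb^n(X)$ and $\theta_\vv$ specialises to the classical Beauville--Fogarty isomorphism $\NS(X)\oplus\Z\delta\xrightarrow{\sim}\NS\Hilb^n(X)$. Because both the moduli space and the tautological line bundles deform flatly, the isomorphism at the special fibre propagates.

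The principal obstacle I expect is tracking $\theta_\vv$ itself through these two reductions at the level of honest line bundles, not just cohomology classes: one must check that the formation of $\det p_*\RlHom(\cE,p^*F)$ commutes with the Fourier--Mukai functor $\Phi$ and with base change along the deformation, and that primitivity of $\vv$ and genericity of $\sigma$ are preserved throughout. A secondary but essential input is the equality $\rho(M_\sigma(\vv))=\rho(X)+1$, which matches the rank of $\vv^\perp$ and makes the isomorphism statement even numerically plausible; this ultimately follows from the Hodge-theoretic refinement of $\theta_\vv$ to a Hodge isometry on full transcendental cohomology.
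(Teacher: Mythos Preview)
The paper does not contain a proof of this theorem: it is stated with attribution to \cite[Sections 7 and 8]{Yoshioka:Abelian} and then used as a black box. There is therefore no ``paper's own proof'' to compare against.

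That said, your outline is a faithful sketch of how the result is established in the literature, and the paper itself alludes to the same reduction mechanism one paragraph earlier (in the discussion following Theorem~\ref{thm:dimmodspace}): the observation of Minamide--Yanagida--Yoshioka, generalised in \cite{BM:projectivity}, that a Fourier--Mukai transform reduces statements about $M_\sigma(\vv)$ to the classical Gieseker case, where Yoshioka's deformation-to-Hilbert-scheme argument applies. Two small corrections: the equivalence $\Phi\colon\Db(X)\to\Db(Y)$ need not be an \emph{auto}equivalence (in general $Y$ is a Fourier--Mukai partner of $X$, not $X$ itself); and the identification of $\NS\Hilb^n(X)$ is usually attributed to Beauville alone. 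The obstacles you flag---compatibility of the determinant construction with $\Phi$ and with base change in families, and preservation of primitivity and genericity---are exactly the technical points that the cited references work through, so your assessment of where the content lies is accurate.
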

We will call $\theta_v$ the \emph{Mukai isomorphism}. 

\begin{Rem} \label{rmk:thetapairing}
In addition, $\theta_v$ identifies the restriction of the Mukai pairing in $\Halg$ with the
\emph{Beauville-Bogomolov-pairing} on the N\'eron-Severi group of the moduli space; however, we will not need that fact
for the proof of Theorem \ref{thm:BN}, only in the concluding sections \ref{sect:biratBN} and
\ref{sect:biratgeneral} on birational geometry of moduli spaces.
\end{Rem}

Consider equations \eqref{eqn:centralchargeexplicit} and \eqref{eq:defnu} for $\alpha \gg 0$; then the slope
$\nu_{\alpha, \beta}(E)$ is approximately given by $- \frac{\alpha} {\mu_{\beta(E)}}$. This observation,
combined with Proposition \ref{prop:subobject} (as well as the bound on Mukai vectors of stable
objects in Theorem \ref{thm:dimmodspace}) leads to the following result:

\begin{Thm} \label{thm:largevolume}
Let $\vv = (v_0, v_1, v_2)$ be a primitive class in $\Halg$ having either positive rank $v_0 > 0$, 
or satisfying $v_0 = 0$ with $v_1$ being effective.
Then there exists $\alpha_0$ such that for all $\alpha \ge \alpha_0$ and all
$\beta < \frac{H.v_1}{H^2 v_0}$ (or $\beta$ arbitrary in case $v_0 = 0$),
the moduli space $M_{\sab}(\vv)$ is equal to the moduli space $M_H(\vv)$ of
$H$-Gieseker-stable sheaves of class $\vv$. More precisely, an object $E \in \Db(X)$ with $v(E) =
\vv$ is 
$\sab$-stable if and only if it is the shift of a Gieseker-stable sheaf.
\end{Thm}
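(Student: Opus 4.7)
The strategy relies on the asymptotic $\nu_{\alpha,\beta}(E) \sim -\alpha/(2\mu_\beta(E))$ as $\alpha \to \infty$ for positive-rank $E \in \Coh^\beta X$: to leading order the $\sigma_{\alpha,\beta}$-slope inverts the $\mu_\beta$-slope, and the $\alpha^{-1}$-correction encodes the $v_2/\rk$ refinement (which, via Riemann--Roch on a K3, matches $\chi/\rk$ up to an additive constant) that turns slope stability into Gieseker stability. So one expects $\sigma_{\alpha,\beta}$-stability at large volume to coincide exactly with Gieseker stability.

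For the direction Gieseker-stable $\Rightarrow \sigma_{\alpha,\beta}$-stable, let $F$ be a Gieseker-stable sheaf of class $\vv$; since $\beta < \mu(\vv)$, $\mu_\beta(F) > 0$, so $F \in T^\beta \subset \Coh^\beta X$. Let $A \hookrightarrow F$ be a non-trivial proper subobject in $\Coh^\beta X$. By Proposition~\ref{prop:subobject}, $A$ is a sheaf with a map $f\colon A \to F$ whose kernel $K := \ker f$ lies in $F^\beta$; let $A' := \im f \subset F$. The sheaf exact sequence $0 \to K \to A \to A' \to 0$ gives the weighted-average identity
\[
\mu_\beta(A)\cdot\rk(A) = \mu_\beta(K)\cdot\rk(K) + \mu_\beta(A')\cdot\rk(A'),
\]
and together with $\mu_\beta(K) \le 0$ and $\mu_\beta(A') \le \mu_\beta(F)$ (the latter from slope-semistability of $F$) this forces $\mu_\beta(A) \le \mu_\beta(F)$, with equality only when $K = 0$ and $\mu(A) = \mu(F)$. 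In the strict case the leading asymptotic gives $\nu_{\alpha,\beta}(A) < \nu_{\alpha,\beta}(F)$ for $\alpha \gg 0$. In the equal-slope case one has $\Im Z_{\alpha,\beta}(A)/\Im Z_{\alpha,\beta}(F) = \rk(A)/\rk(F)$, so a direct computation yields
\[
\nu_{\alpha,\beta}(A) - \nu_{\alpha,\beta}(F) = \frac{\rk(F)}{\Im Z_{\alpha,\beta}(F)}\left(\frac{v_2(A)}{\rk(A)} - \frac{v_2(F)}{\rk(F)}\right),
\]
which is strictly negative by Gieseker stability of $F$.

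For the converse, let $E \in \Db(X)$ be $\sigma_{\alpha,\beta}$-stable with $v(E) = \vv$; after a shift, $E \in \Coh^\beta X$. The canonical inclusion $H^{-1}(E)[1] \hookrightarrow E$ would, if nonzero, have $\nu$-slope tending to a non-negative value as $\alpha \to \infty$ (since $\mu_\beta(H^{-1}(E)) \le 0$) while $\nu_{\alpha,\beta}(E) \to -\infty$, contradicting stability; hence $H^{-1}(E) = 0$ and $E$ is a sheaf. Any Gieseker-destabilizing subsheaf $A \subset E$ lying in $T^\beta$ gives a $\Coh^\beta X$-subobject, and by reversing the slope comparison of the first direction this would violate $\sigma_{\alpha,\beta}$-stability; subsheaves of $E$ not in $T^\beta$ have $\mu \le \beta < \mu(E)$ and so cannot Gieseker-destabilize. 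Thus $E$ is Gieseker-stable.

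The main obstacle is uniformity of the threshold $\alpha_0$ over all $\beta < \mu(\vv)$: the strict-slope step requires a bound on $\alpha$ independent of the potential destabilizer $A$, which amounts to showing that only finitely many Mukai vectors $v(A)$ can arise. This is supplied by combining the inequalities $v(A)^2 \ge -2$ and $(\vv - v(A))^2 \ge -2$ coming from Theorem~\ref{thm:dimmodspace} with the locally finite wall structure of Corollary~\ref{cor:wallcrossing}: the walls for $\vv$ are bounded semicircles with uniformly bounded radii in the relevant region of the $(\beta,\alpha)$-plane. The rank-zero case with $v_1$ effective is parallel, with Simpson stability for pure one-dimensional sheaves replacing Gieseker stability.
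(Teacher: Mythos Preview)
The paper does not give a detailed proof of this theorem; it only sketches the argument in the sentence preceding the statement, invoking the asymptotic $\nu_{\alpha,\beta}(E) \approx -\alpha/\mu_\beta(E)$, Proposition~\ref{prop:subobject}, and the bound on Mukai vectors of stable objects from Theorem~\ref{thm:dimmodspace}. Your expansion uses exactly these three ingredients in the intended way, so the approach matches.

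One step needs correcting. In the converse direction you assert that ``subsheaves of $E$ not in $T^\beta$ have $\mu \le \beta$''. This is false as stated: $A \notin T^\beta$ only means $\mu_\beta^-(A) \le 0$, i.e.\ some Harder--Narasimhan \emph{quotient} of $A$ has slope $\le \beta$, which does not force $\mu(A) \le \beta$. The standard fix is to take $A$ to be the \emph{maximal} Gieseker-destabilising subsheaf of $E$: then $A$ is itself Gieseker-semistable, hence slope-semistable with $\mu(A) \ge \mu(E) > \beta$, so $A \in T^\beta$ automatically and your slope comparison applies. (Equivalently, for an arbitrary destabilising $A$ one may pass to its $T^\beta$-part $T(A) \subset A \subset E$, which still Gieseker-destabilises since the quotient $F(A)$ has $\mu^+ \le \beta < \mu(E)$ and hence strictly smaller reduced Hilbert polynomial.) With this adjustment your argument goes through.
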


\section{Brill-Noether and the moduli space of torsion sheaves} \label{sect:BNintro}

From now on, let $(X, H)$ be a polarised K3 surfaces satisfying Assumption \star,
and let $d \in \Z$ be a degree.
The natural moduli space related to Brill-Noether is 
$M_H(\vv)$ for $\vv = (0, H, d+1-g)$: it parameterises purely one-dimensional sheaves $F$ of Euler
characteristic $d+1 - g$ whose support $\abs{F}$ is a curve in $\abs{H}$. By \cite{Beauville:ACIS},
the map
\begin{equation} \label{eq:Beauville}
\pi \colon M_H(\vv) \to \abs{H} \cong \P^{g}, \quad F \mapsto \abs{F}
\end{equation}
is a Lagrangian fibration, called the \emph{Beauville integrable system}.
The fibre over a smooth curve $C \subset \abs{H}$ is the Picard variety
$\Pic^d(C)$, and the restriction of the symplectic form to any fibre vanishes. 

We will make all our definitions in the context of $M_H(\vv)$. In particular, let
$T_d(C) = \pi^{-1}(C)$ be the moduli space of pure torsion sheaves supported on $C$ and with Euler
characteristic $d + 1 - g$.
\begin{Def} 
We define the following constructible subsets of $T_d(C)$.
\begin{itemize}
\item $W^r_d(C)$ is the set of globally generated sheaves with at least $r+1$ global sections;
\item $\overline{W}^r_d(C)$ is as above, but without the assumption of being globally generated;
\item $V^r_d(C)$ is the set of sheaves with exactly $r+1$ sections. 
\end{itemize}
In addition, let $\VrdH := \bigcup_{C \in \abs{H}} \VrdC$.
\end{Def}

The expected dimension for each of them is given by the Brill-Noether number
\[
\rho(r, d, g) = g - (r+1)(g-d+r).
\]

Our wall-crossing methods most naturally deal with $\VrdH$ and $V^r_d(C)$; we will prove:
\begin{Thm}\label{thm:VrdC}
Assume $(X, H)$ satisfies Assumption \star, and that  $C \in \abs{H}$ is an arbitrary curve
(possibly singular). If $r, d$ satisfy $0 < d \le g-1$ and
$r \ge 0$, then $\VrdC$ is non-empty if and only if $\rho(r, d, g) \ge 0$, in which case $\dim V^r_d(C) = \rho(r, d, g)$.
\end{Thm}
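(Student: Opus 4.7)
The plan is to deduce Theorem \ref{thm:VrdC} by wall-crossing for the Mukai vector $\mathbf{v} = (0, H, d+1-g)$. By Theorem \ref{thm:largevolume}, for $\alpha \gg 0$ and $\beta$ slightly negative, $M_{\sigma_{\alpha,\beta}}(\mathbf{v}) = M_H(\mathbf{v})$ is the Beauville moduli of pure torsion sheaves supported on curves in $|H|$. Decreasing $\alpha$, I will locate the first wall $W$ for $\mathbf{v}$ and identify it, using Assumption \star, as the wall defined by alignment of $Z_{\alpha,\beta}(\mathcal{O}_X)$ with $Z_{\alpha,\beta}(\mathbf{v})$. The role of Assumption \star is numerical: the divisibility condition $H^2 \mid H.D$ restricts which Mukai vectors $\mathbf{s}$ (satisfying $\mathbf{s}^2 \geq -2$ with $Z$-alignment) can define walls, forcing the only relevant destabilizer to be a multiple of $v(\mathcal{O}_X) = (1,0,1)$.

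For $F \in V^r_d(C)$, the $(r+1)$-dimensional space of global sections gives an evaluation map $\ev \colon \mathcal{O}_X^{\oplus (r+1)} \to F$ in $\Coh^\beta X$. The cone $G := \cone(\ev)$ has Mukai vector $\mathbf{w} := \mathbf{v} - (r+1) v(\mathcal{O}_X) = (-(r+1), H, d-g-r)$, and the wall analysis above will show $G$ to be a $\sigma'$-stable object for $\sigma'$ just on the other side of the wall. Conversely, any $G$ of class $\mathbf{w}$ gives $F$ via an extension $0 \to \mathcal{O}_X^{\oplus (r+1)} \to F \to G \to 0$ in $\Coh^\beta X$; modulo the $\mathrm{GL}_{r+1}$-action on $\mathcal{O}_X^{\oplus (r+1)}$, this assignment is bijective on a dense locus, identifying $V^r_d(C)$ with a fiber of the natural ``support'' morphism on $M_{\sigma'}(\mathbf{w})$.

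The dimension count then proceeds via Theorem \ref{thm:dimmodspace}: $M_{\sigma'}(\mathbf{w})$ is non-empty iff $\mathbf{w}^2 \geq -2$ iff $\rho(r,d,g) \geq 0$, and in that case is smooth of dimension
\[
\mathbf{w}^2 + 2 = 2g + 2(r+1)(d-g-r) = 2\rho(r,d,g).
\]
Since $v_1(\mathbf{w}) = H$, the support morphism $M_{\sigma'}(\mathbf{w}) \to |H|$ should be a Lagrangian fibration --- a derived-categorical analogue of the Beauville integrable system --- whose fiber over $[C]$ has dimension $\rho(r,d,g)$. Identifying this fiber with $V^r_d(C)$ via the correspondence above yields both the non-emptiness criterion and the claimed dimension.

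The main obstacle is twofold. First, one must verify that Assumption \star genuinely rules out all walls for $\mathbf{v}$ other than the $v(\mathcal{O}_X)$-wall; this is a careful enumeration of Mukai vectors $\mathbf{s}$ with $\mathbf{s}^2 \geq -2$ and the correct numerical alignment, and it underpins the whole strategy. Second, one must show that the support morphism on $M_{\sigma'}(\mathbf{w})$ is equidimensional of fiber dimension $\rho$ for \emph{every} $C$, not merely generic $C$ --- this is precisely the strengthening over Lazarsfeld's result. The derived-category perspective is essential here: objects of class $\mathbf{w}$ are genuinely two-term complexes rather than sheaves (since $v_0(\mathbf{w}) < 0$), and for singular $C$ the corresponding $F$ may fail to be a line bundle, so a purely sheaf-theoretic argument in $\Coh X$ would be insufficient.
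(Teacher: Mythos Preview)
Your overall wall-crossing setup is correct and matches the paper through the identification of the first wall, the destabilising short exact sequence $\cO_X^{\oplus(r+1)} \hookrightarrow F \twoheadrightarrow G$, and the computation $\ww^2 + 2 = 2\rho$. The family statement $\dim V^r_d(\abs{H}) = \rho + g$ does follow exactly as you outline, via the Grassmannian-bundle description over $M^{\st}_{\overline\sigma}(\ww)$.

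The gap is in the passage from the family to each individual curve. You propose a ``support morphism'' $M_{\sigma'}(\ww) \to \abs{H}$ and assert it is a Lagrangian fibration with fibre $V^r_d(C)$. No such morphism exists: the objects of class $\ww = (-(r{+}1), H, d{-}g{-}r)$ have rank $-(r{+}1) \neq 0$, hence are two-term complexes with $H^{-1}$ a rank-$(r{+}1)$ vector bundle supported on all of $X$, not torsion sheaves on a curve. The condition $v_1(\ww) = H$ does not give a support map. Even the dimensions are wrong: a Lagrangian fibration of a $2\rho$-dimensional space would need a $\rho$-dimensional base, not $\abs{H} \cong \P^g$. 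Moreover, the map $V^r_d(\abs{H}) \to M^{\st}_{\overline\sigma}(\ww)$ has Grassmannian fibres of positive dimension, so $V^r_d(C)$ is certainly not identified with a single fibre of anything on $M(\ww)$.

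What the paper actually does for the fibrewise bound is a symplectic argument that you are missing entirely. One has \emph{two} maps out of $V^r_d(\abs{H})$: the Grassmannian-bundle map $\phi$ to $M^{\st}_{\overline\sigma}(\ww)$, and the restriction $\overline\pi$ of the Beauville map $M_H(\vv) \to \abs{H}$. The key inputs are: (i) the restriction of the symplectic form on $M_H(\vv)$ to $V^r_d(\abs{H})$ equals $\phi^*\overline\omega$, where $\overline\omega$ is the symplectic form on $M^{\st}_{\overline\sigma}(\ww)$; (ii) no curve in $V^r_d(\abs{H})$ is contracted by both $\phi$ and $\overline\pi$. Since $V^r_d(C) = \overline\pi^{-1}(C)$ lies in a Lagrangian fibre of $\pi$, it is $\omega$-isotropic; by (i), $\phi(V^r_d(C))$ is then isotropic in $M^{\st}_{\overline\sigma}(\ww)$, whence $\dim \phi(V^r_d(C)) \le \rho$; by (ii), $\dim V^r_d(C) = \dim \phi(V^r_d(C)) \le \rho$. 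Equality and non-emptiness then follow from the family dimension count by comparing against the generic fibre. This isotropy argument is the genuine content needed to go from ``generic $C$'' to ``every $C$'', and it cannot be replaced by a Lagrangian-fibration claim on $M(\ww)$.
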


We will briefly explain how Theorem \ref{thm:VrdC} implies \ref{thm:BN}. 
Since $\rho(r, d, g)$ is a strictly decreasing function of $r$ in our range $d \le
g-1$, and since
\[ \overline{W}^r_d(C) = V^r_d(C) \setminus \bigcup_{r' > r} V^{r'}_d(C),
\]
we conclude $\dim \overline{W}^r_d(C) = \rho(r, d, g)$ for all $d \le g-1$. 
Similarly,
\[ \overline{W}^r_d(C) = W^r_d(C) \cup \bigcup_{d' < d} B_{d'} \]
where $B_{d'}$ parametrises the sheaves whose global sections generate a subsheaf of Euler
characteristic $d' + 1 - g$. Since $C$ is assumed to smooth, we have $B_{d'} \subset
\overline{W}^r_{d'}(C) \times \Sym^{d-d'}(C)$, and thus
\[ \dim B_{d'} \le \rho(r, d', g) + d-d' < \rho(r, d, g), \]
where the last inequality used the assumption $r  \ge 1$ of Theorem \ref{thm:BN}. This 
proves $\dim W^r_d(C) = \rho(r, d, g)$ as claimed. Finally, the case $d > g-1$ follows 
via Serre duality on $C$.

\section{Hitting the wall}
\label{sect:firstwall}

We now consider wall-crossing for the moduli space $M_{\sab}(\vv)$, with $\vv = (0, H, d+1-g)$ as above.
By Theorem \ref{thm:largevolume}, we have
$M_{\sab}(\vv) = M_H(\vv)$ for $\alpha \gg 0$, and we want to find the wall bounding this
Gieseker-chamber.

Consider $\beta = 0$. In this case $\Im Z_{\alpha, 0}(\cO_X) = 0$; by Theorem
\ref{thm:Bridgelandmain}, this means we have stability conditions for
\[ \alpha > \alpha_0 := \sqrt{\frac{2}{H^2}}. \]
For these stability conditions, note that $\cO_X[1]$ is an object of $\Coh^0 X$ with
$\Im Z_{\alpha, 0}(\cO_X[1]) = 0$, i.e. of slope $+\infty$; therefore it is automatically semistable. 
Proposition \ref{prop:subobject} in fact shows that it has no subobjects in $\Coh^0 X$,
and so $\cO_X[1]$ is \emph{stable} for $\beta = 0$. 
(This also shows that the bound of Theorem \ref{thm:Bridgelandmain} is sharp: we have
$Z_{\alpha, 0}(\cO_X[1]) \to 0$ as $\alpha \to \alpha_0$, and the central charge of semistable objects
can never become zero.)


\begin{Lem} \label{lem:nowallshere}
For $\alpha > \alpha_0$ and $\beta = 0$, we have an isomorphism
$M_{\sigma_{\alpha, 0}}(\vv) = M_H(\vv)$ identifying the stable objects with stable sheaves.
\end{Lem}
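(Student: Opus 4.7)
My plan is to combine Theorem~\ref{thm:largevolume} with Corollary~\ref{cor:wallcrossing}: I will show that the vertical ray $R := \{(\beta, \alpha) : \beta = 0,\ \alpha > \alpha_0\}$ meets no wall for the Mukai vector $\vv := (0, H, d+1-g)$, so $R$ lies in a single chamber. Since $v_0(\vv) = 0$, Theorem~\ref{thm:largevolume} applies at $\beta = 0$ for all sufficiently large $\alpha$, identifying $M_{\sab}(\vv)$ with $M_H(\vv)$ and identifying $\sab$-stable objects with $H$-Gieseker-stable sheaves. If $R$ is wall-free, this identification propagates down to every $\alpha > \alpha_0$, giving the lemma.

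To rule out walls on $R$, I assume for contradiction that $(0, \alpha^*) \in R$ lies on a wall. Then there is a strictly $\sigma_{\alpha^*, 0}$-semistable object $E \in \Coh^0 X$ of class $\vv$. Since $H$ is primitive in $\NS(X)$, $\vv$ is primitive in $\Halg$, so in the Jordan--H\"older filtration of $E$ in $\Coh^0 X$ at least one stable factor $A$ has $v(A) = (a_0, a_1, a_2)$ not a scalar multiple of $\vv$ (otherwise all factors would be positive integer multiples of $\vv$ summing to $\vv$, forcing a single factor and hence $E$ stable). Each JH factor shares the phase of $\vv$, which lies strictly in $(0, 1)$ since $\Im Z_{\alpha^*, 0}(\vv) = \alpha^* H^2 > 0$; by Lemma~\ref{lem2}, a factor with vanishing imaginary part would have phase $1$, so every JH factor, and in particular $A$, satisfies $\Im Z_{\alpha^*, 0} > 0$. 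Summing imaginary parts to $\Im Z_{\alpha^*, 0}(\vv) = \alpha^* H^2$ with at least two strictly positive summands then yields
\[
0 \;<\; \alpha^* (H \cdot a_1) \;<\; \alpha^* H^2, \qquad \text{i.e.\ } 0 < H \cdot a_1 < H^2.
\]

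This contradicts Assumption~\star: although stated for curve classes, the divisibility $H^2 \mid H \cdot D$ extends to all $D \in \NS(X)$ by writing $D = D_+ - nH$ with $D_+$ effective for $n \gg 0$, so $H \cdot a_1$ must be an integer multiple of $H^2$, and no such multiple lies strictly between $0$ and $H^2$. Hence $R$ meets no wall, the Gieseker chamber covers all of $R$, and the lemma follows. The step I expect to need the most care is the extraction of a JH factor of non-proportional class together with the verification that it has strictly positive imaginary part---once that is in place, Assumption~\star\ closes the argument in a single stroke.
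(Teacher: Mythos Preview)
Your proof is correct and follows essentially the same approach as the paper: both argue that Assumption~\star\ forces $\Im Z_{\alpha,0}$ of any object in $\Coh^0 X$ to lie in $\Z_{\ge 0}\cdot \alpha H^2$, so an object of class $\vv$ (with $\Im Z_{\alpha,0} = \alpha H^2$) cannot have two or more Jordan--H\"older factors of the same phase, each with strictly positive imaginary part. Your detour through primitivity and a factor of non-proportional class is unnecessary---it suffices to note that strict semistability already gives at least two JH factors---but the argument is sound.
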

In other words, there is no wall intersecting the line segment
$\beta = 0, \alpha \in \left(\frac{2}{H^2}, +\infty\right)$.
\begin{proof}
This is a direct consequence of Assumption \star: the objects in $M_H(\vv)$ have ``rank one'' 
in $\Coh^\beta X$, and thus can never be destabilised.

To elaborate, consider equation \eqref{eqn:centralchargeexplicit}. 
We have $\Im Z_{\alpha, \beta= 0}(E) = \alpha H.c_1(E) \in \Z_{\ge 0} \alpha H^2$ for all $E \in
\Coh^0 X$.
Any $L \in M_H(\vv)$ has $\Im Z_{\alpha, \beta= 0}(L) = \alpha H^2$. If $L$ were semistable, each
of its Jordan-H\"older factors $A_i$ would have to have $\Im Z_{\alpha, \beta = 0}(A_i) > 0$
(otherwise it could not have the same slope as $L$), and thus $\Im Z_{\alpha, \beta = 0}(A_i) \ge
\alpha H^2$. This is a contradiction. Combined with Corollary \ref{cor:wallcrossing}, this means
they remain stable along the entire path.
\end{proof}

The key observation linking Brill-Noether to wall-crossing is the following Lemma; the case
$d = g-1$ is one of the first wall-crossings studied in the
literature, see \cite{Aaron-Daniele}.
\begin{Lem} \label{lem:firstwall} 
There is a wall bounding the Gieseker-chamber where $\Zab(\cO_X)$ aligns with $\Zab(\vv)$. 
The sheaves $L \in M_{\sab}(\vv)$ getting destabilised are exactly those with $h^0(L) > 0$, and the destabilising
short exact sequences are given by
\begin{equation} \label{eq:firstwall}
\cO_X^{\oplus h^0(L)} \into L \onto W
\end{equation}
for some object $W$ that remains stable at the wall.
\end{Lem}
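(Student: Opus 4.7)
The plan is to find the wall numerically, exhibit the destabilising sequences via global sections, and then prove the converse together with stability of $W$. Substituting $v(\cO_X) = (1, 0, 1)$ and $\vv = (0, H, d+1-g)$ into \eqref{eqn:centralchargeexplicit} and imposing the alignment condition $\Re Z_{\alpha,\beta}(\cO_X)\, \Im Z_{\alpha,\beta}(\vv) = \Im Z_{\alpha,\beta}(\cO_X)\, \Re Z_{\alpha,\beta}(\vv)$ produces the equation of a semicircle in the $(\alpha, \beta)$-plane; a direct check shows this passes through the boundary point $(\alpha_0, 0)$ of the stability manifold (consistently with $Z_{\alpha_0, 0}(\cO_X) = 0$). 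By Lemma \ref{lem:nowallshere}, no wall crosses the open ray $\beta = 0,\ \alpha > \alpha_0$, so any wall bounding the Gieseker chamber from below must touch the boundary at $(\alpha_0, 0)$; combined with Remark \ref{rem:wallsaslines}, this $\cO_X$-semicircle is the relevant candidate.

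For the destabilisation, I would fix $(\alpha, \beta)$ on the candidate wall with $\beta \in (-1, 0)$ close to $0$, so that $\cO_X \in T^\beta \subset \Coh^\beta X$ and $L \in T^\beta$ as a torsion sheaf. Given $L \in M_H(\vv)$ with a non-zero section $s \in H^0(L)$, the sheaf morphism $\cO_X \xrightarrow{s} L$ has kernel the ideal sheaf $I_C$ of $C = \abs{L}$; since $I_C$ is rank-one torsion-free of slope $-1$, we have $I_C \in F^\beta$, and Proposition \ref{prop:subobject} promotes $s$ to a monomorphism in $\Coh^\beta X$. Iterating (lifting fresh sections through successive cokernels via $\Ext^1(\cO_X, \cO_X) = H^1(X, \cO_X) = 0$) yields an inclusion $\cO_X^{\oplus h^0(L)} \into L$ in $\Coh^\beta X$ whose cone $W$ has $H^{-1}(W) = \ker(\ev) \in F^\beta$ (a subsheaf of $\cO_X^{\oplus h^0(L)}$ whose total slope $-1/h^0(L)$ and HN slopes can be kept below $\beta$ for $|\beta|$ small enough) and $H^0(W) = \cok(\ev) \in T^\beta$ (torsion). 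The wall equation then forces $\cO_X^{\oplus h^0(L)}$, $L$, and $W$ to share the same phase, giving the destabilising SES.

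For the converse, if $h^0(L) = 0$ then $\Hom_{\Coh^\beta X}(\cO_X, L) = 0$ and $L$ has no $\cO_X$-subobject; a numerical analysis based on the wall equation—using Assumption \star to constrain the Mukai vectors of potential destabilisers to the sublattice $\Z \cdot v(\cO_X) + \Z \cdot \vv$—forces any destabiliser here to be of the form $\cO_X^{k}$ up to shift, so $L$ remains stable. For the cone $W$ in the destabilising case, applying $\Hom(\cO_X, -)$ to the triangle $\cO_X^{\oplus h^0(L)} \to L \to W$ gives $\Hom(\cO_X, W) = 0$, and the same numerical argument promotes $W$ from semistable to stable on the wall.

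The principal obstacle will be the tilt-heart kernel condition in the second step: ensuring $\ker(\ev) \in F^\beta$ at each stage of the iteration. The plan is to work on the wall at a point with $\beta$ sufficiently close to $0$ that the slope window remains favourable throughout; Assumption \star plays a role by preventing hidden intermediate numerical classes from creating secondary walls that could disrupt the iteration.
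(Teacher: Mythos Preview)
Your strategy is essentially the paper's, but you are making the injectivity of $\cO_X^{\oplus h^0(L)}\to L$ much harder than necessary. The paper bypasses what you identify as the ``principal obstacle'' by working inside the abelian category $\cA$ of $\overline{\sigma}$-semistable objects of the same phase as $\vv$. Once one knows that $\cO_X$ is $\overline{\sigma}$-\emph{stable} at the wall (the paper argues this from $\abs{Z_{\alpha,\beta}(\cO_X)}\to 0$ as $(\beta,\alpha)\to(0,\alpha_0)$; you do not address this point), $\cO_X$ is a simple object of $\cA$. The evaluation map is then a morphism in $\cA$; any nonzero kernel would have $\cO_X$ as a simple subobject, contradicting the linear independence of the chosen sections. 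Hence the map is injective in $\cA$ and its cokernel $W$ is automatically $\overline{\sigma}$-semistable---no tilt-heart kernel check, no iteration, no delicate choice of $\beta$. Your direct route via Proposition~\ref{prop:subobject} can be made to work (show $\Hom(\cO_X,\ker(\ev))=0$; deduce that any slope-$0$ semistable subsheaf of $\ker(\ev)\subset\cO_X^{\oplus n}$ would be an iterated self-extension of $\cO_X$ and hence $\cO_X^{\oplus k}$, a contradiction; then Assumption~\star\ bounds $\mu^+(\ker(\ev))\le -1/h^0(L)$ uniformly), but this is substantially more effort for the same conclusion, and it obscures the point the paper emphasises: semistability of $W$ is \emph{automatic}, independent of any slope estimate for the Lazarsfeld--Mukai bundle.

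There is also a small gap in your stability argument for $W$. The lattice analysis you invoke (the paper states it separately as Lemma~\ref{lem:Wstable}) shows that a $\overline{\sigma}$-semistable object of class $\vv - t\,v(\cO_X)$ is strictly semistable only if $\cO_X$ occurs as a subobject \emph{or} as a quotient. You check $\Hom(\cO_X,W)=0$ from the long exact sequence, but you also need $\Hom(W,\cO_X)=0$; this holds because $W$ is a quotient of $L$ in $\cA$ and $\Hom(L,\cO_X)=0$ (torsion to torsion-free), but it must be said.
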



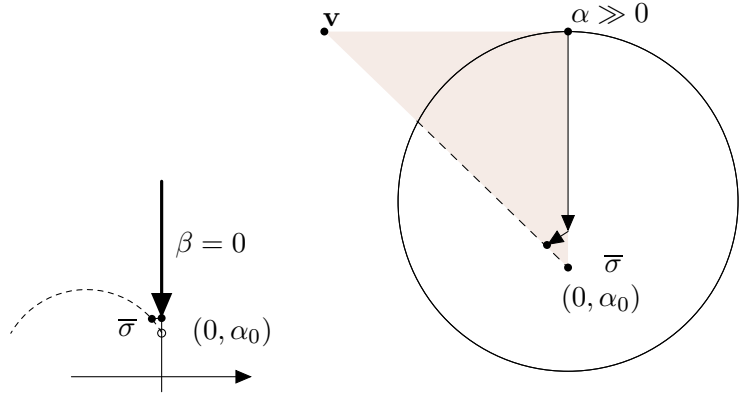
\begin{figure}[!tbp]
 \begin{subfigure}[b]{0.25\textwidth}


\begin{tikzpicture}[line cap=round,line join=round,>=triangle 45,x=1.0cm,y=1.0cm]
\draw[->,color=black] (-1.2,0) -- (1.2,0);
\draw[color=black] (0,-0.2) -- (0,2.6);
\draw [shift={(-1,0)},dash pattern=on 2pt off 2pt]
plot[domain=0.53:2.62,variable=\t]({1*1.16*cos(\t r)+0*1.16*sin(\t r)},{0*1.16*cos(\t
r)+1*1.16*sin(\t r)});
\draw [->,line width=1.2pt] (0,2.6) -- (0,0.78);
\draw (0,0.78) -- (-0.13,0.77);
\draw [color=black] (0,0.58) circle (1.5pt);
\draw[color=black] (0.94,0.6) node {$(0, \alpha_0)$};
\fill [color=black] (0,0.78) circle (1.5pt);
\draw[color=black] (0.65555,1.75) node {$\beta = 0$};
\fill [color=black] (-0.13,0.77) circle (1.5pt);
\draw[color=black] (-0.46,0.66) node {$\overline{\sigma}$};
\end{tikzpicture}


\end{subfigure}
 \begin{subfigure}[b]{0.4\textwidth}

\definecolor{zzttqq}{rgb}{0.6,0.2,0}
\begin{tikzpicture}[line cap=round,line join=round,>=triangle 45,x=1.0cm,y=1.0cm]
\clip(-2.4,0) rectangle (3.5,5.5);
\fill[line width=0pt,color=zzttqq,fill=zzttqq,fill opacity=0.1] (-2.1,4.8) -- (1.14,1.66) --
(1.14,4.8) -- cycle;
\draw(1.14,2.54) circle (2.26cm);
\draw(1.14,2.54) circle (2.26cm);
\draw [dash pattern=on 3pt off 3pt] (-0.87,3.61)-- (1.14,1.66);
\draw [->] (1.14,4.8) -- (1.14,2.14);
\draw [->] (1.14,2.14) -- (0.86,1.96);
\fill [color=black] (1.14,4.8) circle (1.5pt);
\draw[color=black] (1.7,5.04) node {$\alpha \gg 0$};
\fill [color=black] (1.14,1.66) circle (1.5pt);
\draw[color=black] (1.58,1.24) node {$(0, \alpha_0)$};
\fill [color=black] (-2.1,4.8) circle (1.5pt);
\draw[color=black] (-2,4.96) node {$\vv$};
\fill [color=black] (0.86,1.96) circle (1.5pt);
\draw[color=black] (1.74,1.72) node {$\overline{\sigma}$};
\end{tikzpicture}

\end{subfigure}
\caption{From the large volume limit to $\osigma$. No walls in the shaded region!}
\label{fig:pathtoosigma}
\end{figure}

\begin{proof}
This is perhaps most easily explained using the visualisation of walls as lines in the projective
plane discussed in Remark \ref{rem:wallsaslines}. The locus where the central charges of all objects
in  \eqref{eq:firstwall} are aligned is the line segment between $\vv$ and $v(\cO_X)$; in the
upper half-plane picture, it is the arc of a circle ending at $(0, \alpha_0)$. 
Now consider the path in the upper half plane as in fig.~\ref{fig:pathtoosigma} that starts at
$\beta = 0, \alpha \gg 0$, goes straight to a point $(0, \alpha_0 + \epsilon)$ just slightly above
$(0, \alpha_0)$, and then turns left until it hits the above semi-circle. The visualisation of walls via
lines shows immediately that if this path would hit any
other wall beforehand, then that wall would also intersect the straight line segment $\beta = 0,
\alpha \in (\alpha_0, +\infty)$ in contradiction to Lemma \ref{lem:nowallshere}.
Also, $\cO_X$ cannot be destabilised along this path: for $(\beta, \alpha)$ near $(0, \alpha_0)$, we
have $\abs{\Zab(\cO_X)} \ll 1$, and it is the only stable object with that property.

Let $\osigma = (\Coh^{\overline{\beta}} X, \overline{Z})$ be the stability condition at the wall. In
the abelian category of $\osigma$-semistable
objects with central charge aligned with $\overline{Z}(\vv)$, the object $\cO_X$ is a simple object;
hence the natural map $\cO_X^{\oplus h^0(L)} \to L$ must necessarily be an injective map, and the
quotient $W$ must be semistable. 

It remains to prove that $W$ is stable. Note the $\Hom(W, \cO_X)  = 0$ as $W$ is a quotient of $L$.
Moreover, $\Hom(\cO_X, W) = 0$ follows by applying $\Hom(\cO_X, \blank)$ to the short exact sequence
defining $W$. Hence stability of $W$ follows from the following Lemma.
\end{proof}

\begin{Lem} \label{lem:Wstable}
Let $\osigma$ be a stability condition on the wall constructed above.
Let $W$ be an object of class $\vv - t v(\cO_X)$ for some $t \in \Z$, and assume that $W$ is $\osigma$-semistable. Then $W$ is stable
if and only if $\Hom(\cO_X, W) = \Hom(W, \cO_X) = 0$.
\end{Lem}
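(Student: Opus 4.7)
My plan is to work entirely inside the finite-length abelian category $\cP(\phi) \subset \Db(X)$ of $\osigma$-semistable objects of phase $\phi = \phi(\overline{Z}(\vv))$; by Lemma~\ref{lem:firstwall} this category contains $\cO_X$ as a simple object and, by hypothesis, contains $W$. The ``only if'' direction is then immediate: $v(W) = (-t, H, d+1-g-t)$ and $v(\cO_X) = (1,0,1)$ disagree in the $v_1$-component since $H \neq 0$, so $W \not\cong \cO_X$, and two non-isomorphic stable objects of the same phase in any Bridgeland heart satisfy the standard vanishing $\Hom(W, \cO_X) = \Hom(\cO_X, W) = 0$.

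For the converse I would argue the contrapositive: assuming $W$ is strictly semistable in $\cP(\phi)$, I produce a nonzero Hom with $\cO_X$. Classes of simples in $\cP(\phi)$ lie in the rank-two sublattice $\Lambda \subset \Halg$ spanned by $v(\cO_X)$ and $\vv$; I write $v(A) = \alpha_A v(\cO_X) + \beta_A \vv$ for a simple $A$. Any such simple with $\beta_A = 0$ satisfies $v(A)^2 = -2\alpha_A^2 \geq -2$ by Theorem~\ref{thm:dimmodspace}, hence $\alpha_A = 1$ by positivity of $\Im \overline{Z}(A)$; uniqueness of the spherical stable object of class $v(\cO_X)$ then forces $A \cong \cO_X$. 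Because $v(W) = -t\, v(\cO_X) + \vv$ has $\beta$-coefficient equal to $1$, the Jordan--H\"older factors $A_j$ of $W$ with multiplicities $m_j$ satisfy $\sum_j m_j \beta_{A_j} = 1$; combined with positivity $\Im\overline{Z}(A_j) > 0$ and the Mukai-square bound $\langle v(A_j), v(A_j)\rangle \geq -2$, this should pin down a unique non-$\cO_X$ simple $W_\#$ with $\beta_{W_\#} = 1$ and multiplicity exactly $1$, the remaining $k \geq 0$ JH factors being copies of $\cO_X$.

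Strict semistability forces $k \geq 1$, so the JH has length at least $2$. In any JH filtration the first subquotient $W_1$ is a simple subobject of $W$ and the last subquotient $W/W_{m-1}$ is a simple quotient; since $W_\#$ has JH-multiplicity exactly $1$, it cannot simultaneously occupy both ends, so at least one of these two simples is $\cO_X$, producing either an inclusion $\cO_X \hookrightarrow W$ or a surjection $W \twoheadrightarrow \cO_X$ and thereby a genuine nonzero element of $\Hom(\cO_X, W)$ or $\Hom(W, \cO_X)$, contradicting the hypothesis. The main obstacle is the classification step of the previous paragraph: ruling out hypothetical simples in $\cP(\phi)$ with $\beta_A \leq -1$ or $\beta_A \geq 2$ that could combine into the class of $W$. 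I would handle these using that we sit on the first wall below the Gieseker chamber: any such stable object would either create an additional wall between $\osigma$ and the large-volume limit, contradicting Lemma~\ref{lem:nowallshere} together with the path argument in the proof of Lemma~\ref{lem:firstwall}, or would violate positivity of the imaginary part of its central charge on the aligned ray.
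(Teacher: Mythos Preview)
Your overall architecture is correct: the ``only if'' direction is fine, and once you know that all but one Jordan--H\"older factor of $W$ in $\cP(\phi)$ is isomorphic to $\cO_X$, the conclusion that $\cO_X$ appears as a sub or a quotient is immediate. The gap is precisely where you flag it, and neither of your proposed remedies closes it. The ``additional wall'' argument does not give information: any simple $A \in \cP(\phi)$ has Mukai vector in the rank-two lattice $\Lambda$ spanned by $\ss := v(\cO_X)$ and $\vv$ (at least when $\rho(X)=1$), and the potential wall determined by $\Lambda$ is exactly the wall you are already sitting on, so Lemma~\ref{lem:nowallshere} yields no contradiction. The Mukai-square bound together with ray positivity does not by itself exclude $\beta_A \ge 2$ (e.g.\ $\aa = 2\vv - \ss$ satisfies both for $r = \lvert\overline Z(\ss)\rvert/\lvert\overline Z(\vv)\rvert$ small); it can exclude $\beta_A \le -1$, but only after a quantitative estimate on $r$ at the specific $\osigma$, which you have not carried out and which would make the argument depend on the location of $\osigma$ on the wall.

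The missing constraint is one you have already used in the ``only if'' direction. For any simple $A \not\cong \cO_X$ in $\cP(\phi)$, the same $\Hom$-vanishing between non-isomorphic stable objects of equal phase gives $\Hom(\cO_X, A) = \Hom(A, \cO_X) = 0$, hence
\[
(\ss, v(A)) \;=\; -\chi(\cO_X, A) \;=\; \ext^1(\cO_X, A) \;\ge\; 0.
\]
In your coordinates $v(A) = \alpha_A \ss + \beta_A \vv$ this reads $-2\alpha_A + \beta_A\,(\ss,\vv) \ge 0$ with $(\ss,\vv) = g-1-d \ge 0$, while ray positivity reads $\alpha_A r + \beta_A > 0$. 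If $\beta_A \le 0$, the first forces $\alpha_A \le 0$, and then the second fails; hence $\beta_A \ge 1$ for every simple $A \neq \cO_X$, and your counting $\sum m_j \beta_{A_j} = 1$ finishes the job. This is exactly the paper's argument, and it makes the $v(A)^2 \ge -2$ bound unnecessary. One further point: when $\rho(X) > 1$ the classes of simples need not lie in $\Lambda$; following the paper, replace $v(A)$ by its projection $\bigl(\rk A,\; \tfrac{1}{H^2} H.c_1(A),\; v_2(A)\bigr) \in \Z^3$ using Assumption \star, and run the same computation.
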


\begin{figure}
\begin{tikzpicture}[
    media/.style={font={\footnotesize\sffamily}},
    interface/.style={
        postaction={draw,decorate,decoration={border,angle=-45,
                    amplitude=0.3cm,segment length=2mm}}},
    ]

    \fill[gray!10,rounded corners] (0, 0) --  (1, -3) -- (5,-3) -- (5,3) -- (1.5, 3);
    \draw[blue,line width=.5pt,interface](1,-3)--(-1,3);
    \draw[blue](1, -2) node[right]{$\overline Z(\aa)$ aligned with $\overline Z(\vv)$};
    \draw[red,line width=.5pt,interface](-1.5,-3)--(1.5,3);
    \draw[red](1.5, 2) node[right]{$(\ss, \aa) \ge 0$};
    \draw[dashed,gray](0,-3)--(0,3);
    \draw[dashed,gray](-1.5,0)--(5,0);
    \draw[<->,line width=1pt] (1,0) node[above]{$\vv$}-|(0,1) node[above]{$\ss$};

    %
\end{tikzpicture}
\caption{Jordan-H\"older factors of $W$}
\label{fig:ranktwosublattice}
\end{figure}
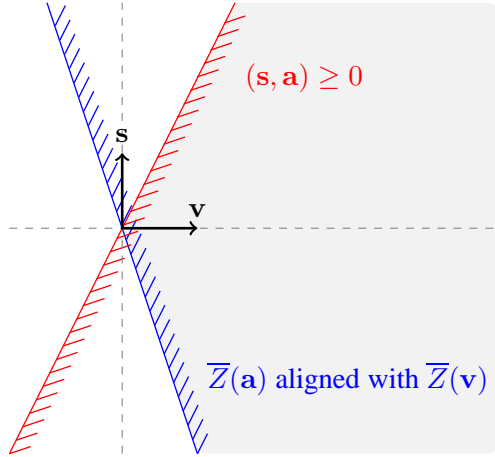

\begin{proof}
Assume first that $\rho(X) = 1$, and consider the Mukai vector $\aa$ of any Jordan-H\"older factor
$A$ of $W$. It must be contained in the rank two sublattice generated by $\vv$ and $\ss:= v(\cO_X)$,
otherwise its central charge would not be aligned with $\overline Z(\vv)$. Further, since $\overline
Z(\aa)$ must be on the same \emph{ray} as $\overline Z(\vv)$, there is a half-plane in this rank two
sublattice containing $\ss, \vv$ and $\aa$, see fig.~\ref{fig:ranktwosublattice}

On the other hand, if $A \neq \cO_X$, then $(\ss, \aa) = - \chi(\cO_X, A) \ge 0$;
since $\ss^2 = -2$ and $(\ss, \vv) > 0$ this cuts out a second half-plane with
configuration as in fig.~\ref{fig:ranktwosublattice}: $\aa$ must lie in the shaded area of the
figure.

It follows that either $\aa = \ss$, or $\aa = a \vv + b \ss$ with $a > 0$. But since
$\ss$ and $\vv$ are a basis for this rank two lattice we must have $a \ge 1$; it follows that
either $a = 1$ or $A = \cO_X$. Hence all but one of the Jordan-H\"older factors of $W$ are
isomorphic to $\cO_X$; so $\cO_X$ is either a subobject or a quotient of $W$, a
contradiction.

When $\rho(X) > 1$, the same arguments apply if we replace all Mukai vectors $\aa = v(A)$ with the
vector $\left(\rk(A), \frac 1{H^2} H.c_1(A), v_2(A)\right) \in \Z^3$; note again that Assumption
\star{} is essential here.
\end{proof}


Let $\ww_r = \vv - (r+1) v(\cO_X) = \left(-(r+1), H, d-g-r)\right)$ be the Mukai vector of $W$.
Note that $\ww_r^2 = 2 \rho(r, d, g) -2$. As in \cite{Lazarsfeld:BN-Petri}, this immediately leads to
the first conclusion:
\begin{Cor}
If $\rho(r, d, g) < 0$, then $V^r_d(C) = \emptyset$ for all $C \in \abs{H}$.
\end{Cor}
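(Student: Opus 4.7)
The plan is to combine Lemma~\ref{lem:firstwall} with the classical numerical lower bound on $v(E)^2$ for stable objects on a K3 surface, which is an immediate consequence of Serre duality.

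Arguing by contradiction, I would suppose some $L \in V^r_d(C)$ exists. Then $L \in M_H(\vv)$ is $H$-Gieseker-stable and satisfies $h^0(L) = r+1$, so by Theorem~\ref{thm:largevolume} it is $\sab$-stable throughout the Gieseker chamber (note that since $v_0(\vv) = 0$, this chamber is available for arbitrary $\beta$ once $\alpha$ is large enough). Crossing the wall $\osigma$ produced by Lemma~\ref{lem:firstwall} then yields the destabilising short exact sequence
\[
\cO_X^{\oplus(r+1)} \into L \onto W
\]
in $\Coh^{\overline{\beta}} X$, where $W$ is an $\osigma$-\emph{stable} object of Mukai vector $\ww_r = \vv - (r+1)v(\cO_X) = (-(r+1), H, d-g-r)$.

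The conclusion is then purely numerical. Stability of $W$ together with Serre duality on the K3 surface $X$ (using $K_X = \cO_X$) gives $\hom(W,W) = \ext^2(W,W) = 1$, so
\[
-\ww_r^2 = \chi(W,W) = 2 - \ext^1(W,W) \leq 2,
\]
i.e.\ $\ww_r^2 \geq -2$. Since $\ww_r^2 = 2\rho(r,d,g) - 2$, this forces $\rho(r,d,g) \geq 0$, contradicting the hypothesis.

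I do not anticipate any real obstacle: all the substantive work has already been carried out in the preceding sections. The only point that requires a moment's care is that the final Mukai bound uses \emph{stability} of $W$ rather than mere semistability—and this is precisely what the closing assertion of Lemma~\ref{lem:firstwall} (via Lemma~\ref{lem:Wstable}) supplies.
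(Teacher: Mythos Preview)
Your argument is correct and follows essentially the same route as the paper: invoke Lemma~\ref{lem:firstwall} to produce a $\osigma$-stable object $W$ of class $\ww_r$, then use the bound $\ww_r^2 \ge -2$ together with $\ww_r^2 = 2\rho(r,d,g)-2$. The only cosmetic difference is that the paper cites Theorem~\ref{thm:dimmodspace} for this bound, whereas you inline its easy direction via Serre duality and Hirzebruch--Riemann--Roch; your appeal to Theorem~\ref{thm:largevolume} is also harmless but not strictly needed, since Lemmas~\ref{lem:nowallshere} and~\ref{lem:firstwall} already place $L$ in the Gieseker chamber up to the wall.
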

\begin{proof}
If $V^r_d(C)$ is non-empty, then by Lemma \ref{lem:firstwall} there exists a $\bar\sigma$-stable object of
class $\ww_r$; by Theorem \ref{thm:dimmodspace} this implies $\ww_r^2 \ge -2$.
\end{proof}
Let us now write $\sigma_+$ for a stability condition on the path of fig.~\ref{fig:pathtoosigma}
just before hitting the wall at $\osigma$.
We will prove a converse to Lemma \ref{lem:firstwall}:

\begin{Lem} \label{lem:plusstable}
Let $W \in M_{\bar\sigma}^{\st}(\ww_r)$ be an object which is $\osigma$-\emph{stable}.
Consider any extension of the form
\[ \cO_X^{r+1} \to E \to W \]
induced by an $(r+1)$-dimensional subspace of $\Ext^1(W, \cO_X)$.
Then $E$ is $\sigma_+$-stable.
\end{Lem}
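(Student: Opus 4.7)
The plan is to argue by contradiction: suppose $E$ admits a proper non-zero $\sigma_+$-destabilizing subobject $A \subsetneq E$ with $\phi_+(A) \ge \phi_+(E)$, and derive a contradiction by combining the Jordan--H\"older structure of $E$ at $\osigma$ with two Hom-vanishings coming from the injectivity of the chosen subspace $V \subset \Ext^1(W, \cO_X)$.

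First, I would observe that $E$ is $\osigma$-semistable, as an extension of the two $\osigma$-semistable objects $\cO_X^{r+1}$ and $W$ of the same phase, so its $\osigma$-JH factors are precisely $r+1$ copies of $\cO_X$ and one copy of $W$. By local finiteness of walls (Corollary \ref{cor:wallcrossing}) and continuity of phases, any $\sigma_+$-destabilizing subobject $A$ of $E$ must itself be $\osigma$-semistable with $\phi_{\osigma}(A) = \phi_{\osigma}(E)$; in particular its JH factors form a sub-multiset of those of $E$, i.e., $a$ copies of $\cO_X$ and $b$ copies of $W$ with $0 \le a \le r+1$ and $b \in \{0,1\}$.

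Next I would record the two key Hom-vanishings. Applying $\Hom(-, \cO_X)$ to the defining triangle $\cO_X^{r+1} \to E \to W$ and using $\Hom(W, \cO_X) = 0$ produces
\[ 0 \to \Hom(E, \cO_X) \to \C^{r+1} \xrightarrow{\rho} \Ext^1(W, \cO_X), \]
where $\rho$ sends $(a_1, \dots, a_{r+1}) \mapsto \sum a_i \eta_i$ for a basis $\eta_1, \dots, \eta_{r+1}$ of $V$; linear independence of the $\eta_i$ forces $\Hom(E, \cO_X) = 0$. Dually, applying $\Hom(W, -)$ and using $\Hom(W, \cO_X^{r+1}) = 0$ and the fact that $\mathrm{id}_W \in \Hom(W, W) = \C$ is sent to the non-zero class $\eta \in \Ext^1(W, \cO_X^{r+1})$ gives $\Hom(W, E) = 0$. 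With these in hand, I can split into the two cases allowed by the JH count. When $b = 0$, the factors of $A$ are all $\cO_X$, and since $\Ext^1_{\Db(X)}(\cO_X, \cO_X) = H^1(X, \cO_X) = 0$ on a K3 surface, necessarily $A \cong \cO_X^{\oplus a}$; but on the Gieseker side of the wall $\phi_+(\cO_X) < \phi_+(\vv) = \phi_+(E)$, contradicting the destabilizing property of $A$. When $b = 1$, the quotient $E/A$ has JH factors consisting only of $\cO_X$'s, and the same $\Ext^1$-vanishing forces $E/A \cong \cO_X^{\oplus(r+1-a)}$; if $a < r+1$, composing with a projection onto a single factor gives a non-zero element of $\Hom(E, \cO_X)$, contradicting the vanishing above, whereas $a = r+1$ forces $A = E$, contradicting properness.

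The step I expect to be most delicate is making precise the transition statement that a $\sigma_+$-destabilizing subobject $A$ really arises from an $\osigma$-semistable subobject of $E$ of phase $\phi_{\osigma}(E)$. This depends on the compatibility of the hearts of $\sigma_+$ and $\osigma$ near the wall, together with the standard observation that a proper destabilizer cannot have phase strictly below $\phi_{\osigma}(E)$ (else continuity keeps $\phi_+(A) < \phi_+(E)$) and cannot have phase strictly above (else it would already destabilize $E$ at $\osigma$). Once this comparison is in place, the remainder is pure bookkeeping with JH multiplicities and the two Hom-vanishings above.
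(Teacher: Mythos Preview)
Your proof is correct and follows essentially the same approach as the paper: reduce to the finite-length abelian category of $\osigma$-semistables of phase $\phi_{\osigma}(E)$, classify the possible subobjects of $E$ there, and observe that none can destabilise at $\sigma_+$. The paper compresses this into one sentence (``all subobjects of $E$ are of the form $\cO_X^d$''), whereas you make explicit the two $\Hom$-vanishings $\Hom(E,\cO_X)=0$ and $\Hom(W,E)=0$ coming from the $(r+1)$-dimensional subspace hypothesis; these are exactly what is needed to justify that terse claim, so your version is a more detailed write-up of the same argument rather than a different one.
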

\begin{proof} Evidently, $E$ is $\overline{\sigma}$-semistable. If $E$ is not $\sigma_+$-stable,
then the destabilising subobject $A \into E$ would necessarily be in the abelian category of
$\overline{\sigma}$-semistable objects of
the same slope as $E$. But since $\cO_X$ and $W$ are simple objects in that category, we can
determine all subobjects of $E$: they are all of the form $\cO_X^d$ for some $d < r+1$. But $\cO_X$
has smaller slope than $E$, a contradiction.
\end{proof}
However, note that by the previous Lemmas, $\sigma_+$ is in the Gieseker-chamber: $M_{\sigma_+}(\vv) = M_H(\vv)$.
Hence such an $E$ is automatically a torsion sheaf in $M_H(\vv)$, with $h^0(E) = r+1$, i.e. $E \in
\VrdH$! To confirm
the existence of such $E$, we need one more result:

\begin{Lem} \label{lem:wrstable}
If $\rho(r, d, g) \ge 0$, then the set of $\bar\sigma$-stable objects in $M_{\sigma_+}(\ww_r)$ is open and non-empty.
\end{Lem}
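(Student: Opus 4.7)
The plan is to split the proof into openness and non-emptiness.

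Openness follows directly from Lemma \ref{lem:Wstable}: every object of $M_{\sigma_+}(\ww_r)$ is automatically $\osigma$-semistable (since semistability is a closed condition and $\osigma$ lies in the closure of the $\sigma_+$-chamber), and among $\osigma$-semistable objects of class $\ww_r$, being $\osigma$-stable is equivalent to the open condition $\Hom(\cO_X, W) = \Hom(W, \cO_X) = 0$.

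For non-emptiness, I would first invoke Theorem \ref{thm:dimmodspace}: Assumption (*) ensures that $\ww_r$ is primitive, and $\ww_r^2 = 2\rho(r,d,g) - 2 \ge -2$, so $M_{\sigma_+}(\ww_r)$ is non-empty, smooth, and irreducible of dimension $2\rho(r,d,g)$. It then suffices to show that the closed subset $S \subset M_{\sigma_+}(\ww_r)$ of strictly $\osigma$-semistable objects has dimension strictly less than $2\rho(r,d,g)$; the $\osigma$-stable locus will then be its open dense (hence non-empty) complement.

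To bound $\dim S$, I would stratify $S$ by the multiset of Jordan--H\"older factors at $\osigma$. Rerunning the rank-2 sublattice analysis from the proof of Lemma \ref{lem:Wstable} on each individual JH factor (which is $\osigma$-stable with phase aligned to $\overline{Z}(\ww_r)$) shows that each factor has Mukai vector of the form $n\vv + b\ss$ with $\ss := v(\cO_X)$, $n \ge 0$, and with $n = 0$ forcing the factor to be isomorphic to $\cO_X$. Since the $n$'s must sum to the $\vv$-coefficient of $\ww_r$, namely $1$, every JH type consists of a single distinguished $\osigma$-stable factor $A$ of class $\vv - m\ss$, where $m \ge r+2$ and $\rho(m-1, d, g) \ge 0$ (the latter needed for $A$ to exist, by Theorem \ref{thm:dimmodspace}), together with $m-r-1$ copies of $\cO_X$. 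A Mukai-style computation gives $\ext^1(A, \cO_X) = \ext^1(\cO_X, A) = (\ss, \vv - m\ss) = (g-d-1)+2m$ (using Serre duality on $X$ together with the vanishing of $\Hom$ and $\Ext^2$ between distinct aligned simples), and a standard wall-crossing dimension count on the resulting extension space yields $\dim(\text{stratum}) \le \rho(r,d,g) + \rho(m-1,d,g)$. Since $\rho(\cdot, d, g)$ is strictly decreasing in its first argument throughout the range $0 < d \le g-1$, this is strictly less than $2\rho(r,d,g)$ for all $m \ge r+2$, giving $\dim S < 2\rho(r,d,g)$ as required.

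The main technical difficulty lies in the higher-length strata (when $m \ge r+3$ and $\cO_X$ appears with multiplicity $\ge 2$): the extension data is then a $GL_{m-r-1}$-orbit space on $\ext^1(\cO_X, A)^{\oplus m-r-1}$ rather than a projective space, and one must verify the stated bound carefully. It persists thanks to the numerical identity $\rho(r,d,g) - \rho(m-1,d,g) = \sum_{j=r}^{m-2}(g-d+2j+2)$, which exactly matches the orbit-space contribution coming from each additional copy of $\cO_X$.
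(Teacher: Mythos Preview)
Your overall strategy coincides with the paper's: show non-emptiness of $M_{\sigma_+}(\ww_r)$ via Theorem \ref{thm:dimmodspace}, then bound the dimension of the strictly $\osigma$-semistable locus by stratifying according to the Jordan--H\"older data, which by the rank-two lattice argument consists of one stable factor $W'$ of class $\ww_r - k\ss$ together with $k$ copies of $\cO_X$. The numerical identity you state and your final bound both agree with the paper's computation.

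Where you diverge is in handling the extension data, and here you miss the simplification that makes the paper's count a one-liner. The paper observes that a $\sigma_+$-stable object $W$ cannot admit $\cO_X$ as a \emph{quotient}: on the $\sigma_+$ side of the wall, $\cO_X$ has strictly smaller phase than $W$, so such a quotient would destabilise. Hence the Jordan--H\"older filtration at $\osigma$ is forced to take the form $\cO_X^{k} \hookrightarrow W \twoheadrightarrow W'$, and the stratum is exactly a $\Gr\bigl(k, \Ext^1(W', \cO_X)\bigr)$-bundle over $M_{\osigma}^{\mathrm{stable}}(\ww_r - k\ss)$, giving dimension $\ww_r^2 + 2 - ke - k^2$ with $e = (\ww_r, \ss)$.

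By contrast, you describe the extension data as a $GL_k$-orbit space on $\Ext^1(\cO_X, A)^{\oplus k}$, which parameterises extensions with $\cO_X$ as \emph{quotient}---precisely the filtrations that never arise for $\sigma_+$-stable $W$. Without the observation above you would in principle need to treat all mixed filtration types (some $\cO_X$'s as subobjects, some as quotients), and your ``standard wall-crossing dimension count'' is not self-evidently valid in that generality. So the direction of your extension is backwards, and the inductive step-by-step build-up you sketch is unnecessary once one notes that $\sigma_+$-stability pins down the filtration shape.
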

\begin{proof}
By Theorem \ref{thm:dimmodspace}, the moduli space $M_{\sigma_+}(\ww_r)$ is non-empty of dimension
$\ww_r^2 + 2 = 2\rho \ge 0$; each of its objects are  $\osigma$-semistable of class
$\ww_r$. Consider the Jordan-H\"older factors of such an object $W$. It cannot have $\cO_X$ as a
quotient - otherwise $W$ would not be $\sigma_+$-stable. By Lemma \ref{lem:Wstable}, it is either
stable, or has $\cO_X$ as a subobject.

By induction, it follows just as in Lemma \ref{lem:firstwall} that the Jordan-H\"older filtration of
$W$ is of the form $\cO_X^d \into W \onto W'$, where $W'$ is $\osigma$-stable. 
We want to compute the dimension of the space of such
extension for all such $d$ (if it is non-empty). We write $\ss := v(\cO_X)$ as before, and  set $e
:= (\ww_r, v(\cO_X))$; note $e >0 $. Since $\cO_X$ and $W'$ are $\osigma$-\emph{stable} of the same phase,
we have $\Hom(W', \cO_X) = 0 = \Hom(\cO_X, W')$ and therefore
$\dim \Ext^1(W', \cO_X) = (\ss, v(W'))$. From this, we compute 
the dimension of the space of extensions as
\begin{multline*}
\dim M_{\osigma}^{\mathrm{stable}}(\ww_r - d \ss) + \dim \bigl(\Gr(d, \Ext^1(W', \cO_X)) \bigr) \\
 = \ww_r^2 - 2d e - 2d^2 + 2 + \dim \bigl(\Gr(d, (\ss, \ww_r - d\ss))\bigr) \\
 = \ww_r^2 - 2d e - 2d^2 + 2 + d (e + d)
 = \ww_r^2 + 2 - de - d^2  < \dim M_{\sigma_+}(\ww_r).
\end{multline*}
Therefore, there is an open subset of $M_{\sigma_+}(\ww_r)$ not contained in any of these loci.
\end{proof}

\begin{Cor} \label{cor:dimVrdH}
The set $V^r_d(\abs{H})$ is a Grassmannian-bundle\footnote{When 
$M_{\overline{\sigma}}^\st(\ww_r)$ is a fine moduli space, i.e. it admits a universal family, then
this bundle will be Zariski-locally trivial; in general it will be locally trivial in the \'etale
topology.} 
over $M_{\overline{\sigma}}^{\mathrm{stable}}(\ww_r)$, and its dimension is
\[ \dim V^r_d(\abs{H}) = \rho(r, d, g) + g. \]
\end{Cor}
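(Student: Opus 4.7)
The plan is to show that the map
\[
q \colon \VrdH \to M_{\osigma}^{\st}(\ww_r), \quad E \mapsto W := E / \bigl(H^0(E) \otimes \cO_X\bigr)
\]
(cokernel formed in $\Coh^{\overline{\beta}} X$) exhibits $\VrdH$ as a Grassmannian bundle with fibre $\Gr(r+1, \Ext^1(W, \cO_X))$. Well-definedness is immediate from Lemma~\ref{lem:firstwall}: the evaluation map is precisely the destabilising inclusion, and its cokernel $W$ is $\osigma$-stable of class $\ww_r$.

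To set up the fibre structure, I first compute $\dim \Ext^1(W, \cO_X)$. Since $\cO_X$ and $W$ are both $\osigma$-stable of equal phase with distinct Mukai vectors, $\Hom(\cO_X, W) = \Hom(W, \cO_X) = 0$; Serre duality on the K3 surface gives $\Ext^2(W, \cO_X) \cong \Hom(\cO_X, W)^\vee = 0$. Hence
\[
\dim \Ext^1(W, \cO_X) = -\chi(W, \cO_X) = (\ww_r, v(\cO_X)) = (r+1) + (g-d+r),
\]
by a direct Mukai-pairing computation. For the bijection: any subspace $V \subset \Ext^1(W, \cO_X)$ of dimension $r+1$ gives a universal extension $0 \to V \otimes \cO_X \to E_V \to W \to 0$, which by Lemma~\ref{lem:plusstable} is $\sigma_+$-stable, hence in $M_H(\vv) = M_{\sigma_+}(\vv)$ by Theorem~\ref{thm:largevolume}; applying $\mathbf{R}\Hom(\cO_X, -)$ to the triangle and using $\Hom(\cO_X, W) = 0$ together with $H^1(\cO_X) = 0$ yields $H^0(E_V) \cong V$, so $E_V \in \VrdH$. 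Conversely, any $E \in \VrdH$ has extension class in $H^0(E) \otimes \Ext^1(W, \cO_X)$; the corresponding linear map $H^0(E)^\vee \to \Ext^1(W, \cO_X)$ is injective (otherwise a trivial summand $\cO_X$ would split off $E$, contradicting simplicity of the stable sheaf $E$), with image the subspace $V$ recovering $E_V \cong E$.

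Finally, I would globalise this pointwise bijection. When a universal family $\cW$ exists on $M_{\osigma}^{\st}(\ww_r) \times X$, cohomology-and-base-change together with the vanishings above imply that
\[
\cN := R^1 p_{M*}\, \RlHom\bigl(\cW, p_X^* \cO_X\bigr)
\]
is locally free of rank $(r+1) + (g-d+r)$, and $\VrdH$ is identified with the Grassmannian bundle $\Gr(r+1, \cN)$ via the universal extension on the tautological subbundle. The dimension is then
\[
\dim \VrdH = 2\rho(r,d,g) + (r+1)(g-d+r) = \rho(r, d, g) + g,
\]
using $\dim M_{\osigma}^{\st}(\ww_r) = \ww_r^2 + 2 = 2\rho(r,d,g)$ from Theorem~\ref{thm:dimmodspace} and the identity $(r+1)(g-d+r) = g - \rho(r,d,g)$. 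The main obstacle is the non-fine case: without a genuine universal family, one must work with twisted universal sheaves or descend étale-locally, which accounts for the bundle being only étale-locally trivial as noted in the footnote.
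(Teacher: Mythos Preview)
Your approach is essentially identical to the paper's: both establish the Grassmannian-bundle structure from Lemmas~\ref{lem:firstwall} and~\ref{lem:plusstable}, compute $\dim \Ext^1(W,\cO_X) = 2r+1+g-d$ via stability and Serre duality, and then add dimensions. Your treatment is more explicit about the pointwise bijection and the globalisation via a universal family, whereas the paper is terse on these points.

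There is one technical gap. You justify $\dim M_{\osigma}^{\st}(\ww_r) = \ww_r^2 + 2$ by citing Theorem~\ref{thm:dimmodspace}, but that theorem is stated only for stability conditions \emph{generic} with respect to the given class, and $\osigma$ lies on a wall for $\ww_r$ as well (objects of class $\ww_r$ can acquire $\cO_X$ as a Jordan--H\"older factor there, as the proof of Lemma~\ref{lem:wrstable} shows). The paper closes this by explicitly invoking Lemma~\ref{lem:wrstable}: the $\osigma$-stable locus is an open, non-empty subset of $M_{\sigma_+}(\ww_r)$, and $\sigma_+$ \emph{is} generic for $\ww_r$, so Theorem~\ref{thm:dimmodspace} applies to the latter. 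Without this step, neither the non-emptiness of the base nor its dimension is established.
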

\begin{proof}
As we already hinted at above, the first statement follows from Lemma \ref{lem:firstwall}, Lemma
\ref{lem:plusstable} (observe that by the long exact cohomology sequence, $E$ as in that Lemma automatically satisfies $h^0(E) = r+1$), and the identification $M_{\sigma_+}(\vv) = M_H(\vv)$. 

By Lemma \ref{lem:wrstable}, this bundle is non-empty. As in the previous Lemma, we can use
stability with respect to $\osigma$ to compute 
\[ \dim \Ext^1(W, \cO_X) = - \chi(W, \cO_X) = (\ww_r, v(\cO_X)) = 2r + 1 + g -d \]
for all $W \in M_{\osigma}^{\mathrm{stable}}(\ww_r)$; the dimension of the
Grassmannian-bundle is therefore
\begin{align*}
\dim V^r_d(\abs{H}) & = \dim M_{\osigma}^{\mathrm{stable}}(\ww_r) + \dim \mathrm{Gr}(r+1, 2r+1+g-d)
\\ & = \ww_r^2+2 + (r+1)(r+g-d) = \rho(r, g,d ) + g.
\end{align*}
\end{proof}


\subsection*{Comparison}
It is quite useful to compare our approach directly to the one taken in \cite{Lazarsfeld:BN-Petri}. 
When the line bundle $L$ is globally generated, then the object $W$ is the shift $M_L[1]$ of the
kernel $M_L$ of the evaluation map $\cO_X^{h^0(L)} \onto L$ (which is surjective as a map of
sheaves, but injective in our abelian category $\Coh^\beta X$), now called Lazarsfeld-Mukai bundle.

Lazarsfeld shows that when all curves in $\abs{H}$ are irreducible, then $M_L$ cannot have
non-trivial endomorphisms. (Otherwise, there would exist an endomorphism $\phi$ of $M_L^\vee$ that
drops rank at some point, and thus everywhere; then he shows that $c_1(\im \phi) + c_1(\cok \phi)$
would be an effective decomposition of $c_1(M_L^\vee) = H$.) This already implies
$v(M_L)^2 \ge -2$, i.e. $\rho \ge 0$ whenever $\VrdH$ is non-empty. 
Now let $P^r_d$ be the $\mathrm{GL}(r+1)$-bundle over $\VrdH$ corresponding to a choice of basis of
global sections.  Lazarsfeld shows directly (with arguments similar to Mukai's arguments behind Theorem
\ref{thm:dimmodspace}) that $P^r_d$ is smooth of expected dimension, and thus the same
hold for $\VrdH$. Combined with the non-emptiness of $\VrdC$ proven in
\cite{Kleiman-Laksov:BN-existence} for all curves, this implies that $\VrdC$ must have expected dimension for generic $C$. 

One new ingredient in our approach, coming directly from stability conditions is that, even without
Assumption \star, it is completely automatic that the object $W$ is $\osigma$-semistable, see the 
proof of Lemma \ref{lem:firstwall}.
The other difference is that wall-crossing gives a global description of $\VrdH$:
the approach in \cite{Lazarsfeld:BN-Petri} is entirely based on an infinitesimal analysis,
in particular, it is agnostic about which vector bundles can appear as $M_L$---in contrast 
to Lemma \ref{lem:plusstable}.

In the next section, we will see that this allows us to both prove the non-emptiness of $\VrdC$ (the
result of \cite{Kleiman-Laksov:BN-existence}) and to determine its dimension (our strengthening of
the result of \cite{Lazarsfeld:BN-Petri}) at the same time, for all $C$: indeed, our Lemma
\ref{lem:nocurves}, and consequently the proof of Theorem \ref{thm:VrdC} at the end of the following
section depend on having a global picture available.

\section{Conclusion}
\label{sect:conclusion}

Corollary \ref{cor:dimVrdH} is a family version of the Brill-Noether theorem in the form of Theorem
\ref{thm:VrdC}. To make conclusions about each individual curve, we will use additional input
from the restriction of the Beauville integrable system \eqref{eq:Beauville}. It gives a map
$\overline{\pi} \colon \VrdH \to \P^g$; and it remains
to prove that all its fibres have the same dimension $\rho(r, d, g) = \dim(\VrdH) - g$.
We will prove this using
fairly standard arguments for maps between holomorphic symplectic varieties, as well as one more
categorical ingredient.

Consider the following diagram of maps:

\begin{equation} \label{maindiagram}
\xymatrix{
\VrdH \ar@{^{(}->}[r] \ar[d]^\phi \ar[rd]^{\overline{\pi}} & M_H(\vv) \ar[d]^{\pi} \\
M_{\bar \sigma}^\st(\ww_r) & \P^g
}
\end{equation}

\begin{Lem} \label{lem:nocurves}
There is no compact curve $D \subset \VrdH$ that is contracted by both $\overline{\pi}$ and by $\phi$.
\end{Lem}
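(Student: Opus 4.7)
The plan is to argue by contradiction: assuming such a $D$ exists, I show that the restriction of $\overline{\pi}$ to the Grassmannian fiber of $\phi$ containing $D$ is a finite morphism, contradicting the assumption that $\overline{\pi}$ contracts $D$.

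Since $\phi(D)$ is a single point $W \in M_{\bar\sigma}^{\mathrm{stable}}(\ww_r)$ and $\phi \colon V^r_d(|H|) \to M_{\bar\sigma}^{\mathrm{stable}}(\ww_r)$ is a Grassmannian bundle (Corollary \ref{cor:dimVrdH}), the curve $D$ sits inside the fiber $G_W := \phi^{-1}(W) \cong \mathrm{Gr}(r+1, \Ext^1(W, \cO_X))$. The aim is to show that $\overline{\pi}|_{G_W}^* \cO_{|H|}(1) \cong \cO_{G_W}(1)$ (the very ample Pl\"ucker line bundle), which forces $\overline{\pi}|_{G_W}$ to be a finite morphism between projective varieties, hence ruling out the contracted curve $D$.

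To compute this pullback, write $W$ as a two-term complex $[M \to Q]$ with $M := H^{-1}(W)$ torsion-free of rank $r+1$ and $\det M = \cO_X(-H)$, and $Q := H^0(W)$ a zero-dimensional torsion sheaf (with $Q = 0$ in the ``globally generated'' case where $W$ is a shifted locally free sheaf). A point $\Lambda \in G_W$ corresponds, up to $\mathrm{GL}(r+1)$, to a morphism $\psi_\Lambda \colon M \to \cO_X^{r+1}$ obtained as the image of $\Lambda$ under the natural map $\Ext^1(W, \cO_X) \to \Hom(M, \cO_X)$ given by $H^{-1}$ of the connecting morphism $W \to \cO_X^{r+1}[1]$. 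Taking cohomology of the extension triangle $\cO_X^{r+1} \to E_\Lambda \to W$ yields the exact sequence of coherent sheaves
\[
0 \to M \xrightarrow{\psi_\Lambda} \cO_X^{r+1} \to E_\Lambda \to Q \to 0,
\]
so the 1-dimensional support of $E_\Lambda$---which determines its class in $|H|$, as the 0-dimensional quotient $Q$ only contributes embedded points---is the vanishing divisor of
\[
\det(\psi_\Lambda) \in \Hom(\det M, \det \cO_X^{r+1}) = H^0(X, \cO_X(H)).
\]
Because $W$ is $\bar\sigma$-stable, Lemma \ref{lem:plusstable} forces $E_\Lambda$ to be $\sigma_+$-stable, hence Gieseker-stable of class $\vv$ (Theorem \ref{thm:largevolume}); the rank constraint $v_0(E_\Lambda) = 0$ then forces $\psi_\Lambda$ to be generically injective, so $\det(\psi_\Lambda) \neq 0$ for every $\Lambda \in G_W$.

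Consequently, $\overline{\pi}|_{G_W}$ factors as the Pl\"ucker embedding $G_W \hookrightarrow \P(\wedge^{r+1} \Ext^1(W, \cO_X))$ followed by the rational map induced by the natural linear map $\wedge^{r+1} \Ext^1(W, \cO_X) \to H^0(X, \cO_X(H))$ that sends a decomposable element $\phi_1 \wedge \cdots \wedge \phi_{r+1}$ to $\det(\psi_\Lambda)$. By the previous paragraph, the Pl\"ucker image of $G_W$ avoids the indeterminacy locus of this rational map, so $\overline{\pi}|_{G_W}$ is a regular morphism and $\overline{\pi}|_{G_W}^* \cO_{|H|}(1) \cong \cO_{G_W}(1)$ is the very ample Pl\"ucker line bundle. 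Hence $\overline{\pi}|_{G_W}$ is finite, completing the contradiction. The main technical point to verify is the identification of the 1-dimensional part of $\supp(E_\Lambda)$ with the vanishing divisor of $\det(\psi_\Lambda)$ when $W$ is a genuine two-term complex (i.e.\ $Q \neq 0$); this is a routine consequence of the four-term exact sequence above, together with the observation that embedded zero-dimensional points do not alter the first Chern class.
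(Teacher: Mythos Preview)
Your proof is correct and more explicit than the paper's sketch. The paper argues numerically: since Grassmannians have Picard rank one, all $\phi$-contracted curves are proportional, so if $\overline{\pi}$ contracted one it would factor through $\phi$; the paper then rules this out either via the Mukai isomorphism (computing $\pi^*\cO(1) \cdot [\text{line}] \neq 0$) or by exhibiting a single $W = \cV[1]$ with $\cV^\vee$ globally generated for which the support visibly varies along the fibre. Your approach is a fleshed-out, strengthened version of the second alternative: rather than reducing to one convenient $W$, you show $\overline{\pi}|_{G_W}$ is finite for \emph{every} $W$ by identifying $\pi(E_\Lambda)$ with the vanishing divisor of $\det(\psi_\Lambda)$ and recognising the resulting map to $\abs{H}$ as linear in Pl\"ucker coordinates. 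This yields the sharper statement $\overline{\pi}|_{G_W}^*\cO_{\abs{H}}(1) \cong \cO_{G_W}(1)$, at the cost of handling the case $Q \neq 0$. That case is fine (though your phrasing ``embedded points'' is slightly off, since $Q$ is a quotient of $E_\Lambda$, not a subsheaf): away from the fixed finite set $\supp Q$ one has $E_\Lambda \cong \cok(\psi_\Lambda)$, so the Fitting support of $E_\Lambda$ and the divisor $\{\det\psi_\Lambda = 0\}$ agree on a dense open and hence coincide as Cartier divisors in $\abs{H}$.
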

\begin{proof}[Proof (sketch).]
Since the Grassmannian has Picard rank one, all curves contracted by $\phi$ are proportional (in the
group of curves in $M_H(\vv)$ modulo numerical equivalence) to the line in the Grassmannian given as
one of the fibres. If $\overline{\pi}$ were to contract any such curve, it would contract all of
them, and so $\overline{\pi}$ would factor via $\phi$.


There are various ways to see that this is not possible. For example, using the description of the
N\'eron-Severi group of $M_H(\vv)$ in Theorem \ref{thm:Mukaiisom} one can compute both the class
of $L:= \pi^*(\cO_{\P^g}(1)) \in \NS(M_H(\vv))$, and the class of the line $l$ in one of the fibres
of $\phi$; then one sees easily that $L.l \neq 0$. 
Alternatively, the moduli space
$M_{\bar \sigma}^\st(\ww_r)$ contains objects of the form $\cV[1]$ where $\cV$ is a vector bundle whose dual $\cV^\vee$ is
globally generated; that means that varying the extension subspace in 
$\Ext^1(\cV[1], \cO_X) = \Hom(\cO_X, \cV^\vee)$ will result in varying the support of the line
bundle in $\VrdH$.
\end{proof}

Let $\omega$ denote the symplectic form on $M_H(\vv)$.
Recall that $\pi$ is a Lagrangian fibration; in particular, the restriction of $\omega$ to any fibre
$\VrdC$ of $\overline{\pi}$ vanishes. On the other hand:

\begin{Lem} \label{lem:pullback}
The restriction $\omega|_{\VrdH}$ is the pull-back $\phi^* \overline{\omega}$ of the
symplectic form on $M_{\bar \sigma}^\st(\ww_r)$.
\end{Lem}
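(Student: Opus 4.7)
Following Mukai, the symplectic form at a point $E$ of any moduli space of $\sigma$-stable objects on a K3 surface is given by
\[
\omega_E(u, v) = \mathrm{tr}_E(u \circ v)
\]
for $u, v \in \Ext^1(E, E)$, where $\circ$ is Yoneda composition and $\mathrm{tr}_E \colon \Ext^2(E, E) \to H^2(\cO_X) \cong \C$ is the canonical trace. Applied to both $L \in M_H(\vv)$ and $W = \phi(L) \in M_{\osigma}^\st(\ww_r)$, the lemma reduces to proving the tangent-level identity
\[
\mathrm{tr}_L(u \circ v) = \mathrm{tr}_W\bigl(d\phi(u) \circ d\phi(v)\bigr)
\]
for all $u, v \in T_L V^r_d(\abs{H}) \subset \Ext^1(L, L)$.

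\textbf{Step 1: describe $d\phi$ via the short exact sequence.} Using the SES $A \to L \to W$ of Lemma \ref{lem:firstwall} with $A = \cO_X^{\oplus (r+1)}$, a tangent vector $u \in T_L V^r_d(\abs{H})$ arises from a first-order deformation $\widetilde{L}$ preserving the condition $h^0 \ge r+1$. The $(r+1)$-dimensional space of global sections extends (possibly after a change of basis), so $u$ lifts to a compatible first-order deformation of the entire SES
\[
\widetilde{A} \to \widetilde{L} \to \widetilde{W}.
\]
Since $\Ext^1(A, A) = H^1(\cO_X)^{\oplus (r+1)^2} = 0$ on a K3, the deformation $\widetilde{A}$ is canonically trivial; equivalently, the induced class $u_A \in \Ext^1(A, A)$ vanishes. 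The induced $\widetilde{W}$ yields $d\phi(u) =: u_W \in \Ext^1(W, W)$.

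\textbf{Step 2: trace additivity along the SES.} A standard fact in the derived category asserts that for any exact triangle $A \to E \to B$ and a triangle-preserving class $f \in \Ext^n(E, E)$, the trace is additive:
\[
\mathrm{tr}_E(f) = \mathrm{tr}_A(f_A) + \mathrm{tr}_B(f_B).
\]
Apply this with $E = L$, $B = W$, $n = 2$, $f = u \circ v$. By Step 1, both $u$ and $v$ preserve the SES, so does $u \circ v$, with induced classes $u_A \circ v_A = 0$ (since $u_A = v_A = 0$) and $u_W \circ v_W = d\phi(u) \circ d\phi(v)$. Hence
\[
\mathrm{tr}_L(u \circ v) = \mathrm{tr}_A(0) + \mathrm{tr}_W\bigl(d\phi(u) \circ d\phi(v)\bigr) = \mathrm{tr}_W\bigl(d\phi(u) \circ d\phi(v)\bigr),
\]
which is the desired identity.

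\textbf{Main obstacle.} The two conceptual ingredients---Mukai's tangent-level formula for the symplectic form, and trace additivity along a triangle---are classical and well-documented in both the sheaf and the derived-category setting. The technical point requiring care is the \emph{canonical} lift in Step 1: that a first-order deformation of $L$ in $V^r_d(\abs{H})$ comes equipped with a canonical deformation of the entire triangle $A \to L \to W$, equivalently, that $u_A$ vanishes canonically. This ultimately boils down to the rigidity of $\cO_X^{\oplus (r+1)}$: because $\Ext^1(A, A) = 0$, the subobject $A \subset L$ has no non-trivial first-order deformations, so it must deform trivially inside any deformation of $L$ preserving the subspace of sections. Once this is in place, the lemma follows formally.
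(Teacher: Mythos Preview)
Your argument is correct and rests on the same underlying idea as the paper's: identify the tangent space to $\VrdH$ at $L$ with those $f \in \Ext^1(L,L)$ annihilating $\alpha$, factor $f$ through the quotient map $\beta$, and exploit that the $\cO_X^{r+1}$-piece contributes nothing. The packaging differs. The paper writes $f = g\circ\beta$, sets $f_W = \beta[1]\circ g$, and computes $\omega(f,f') = \omega(f_W,f'_W)$ directly by moving $\beta$ across the Serre duality pairing using its bifunctoriality---a short, self-contained chain of equalities. You instead invoke the more general principle of trace additivity for endomorphisms compatible with a triangle, together with the rigidity $\Ext^1(A,A)=0$ forcing $u_A=0$. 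Your route is more conceptual (it makes transparent \emph{why} the $A$-term drops out) but imports an external lemma; the paper's route is more explicit and needs nothing beyond the functoriality of the pairing. One small point worth tightening in your Step~1: the well-definedness of $u_W = d\phi(u)$ requires checking that the lift $g$ with $f = g\circ\beta$ determines $\beta[1]\circ g$ uniquely; this follows from $\Hom(\cO_X,W)=0$, which the paper established in Lemma~\ref{lem:firstwall}.
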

\begin{proof}
The proof is very similar to arguments in \cite{Mukai:Symplectic}. 

Consider $L \in \VrdH$, and write
$\cO_X^{r+1} \xrightarrow{\alpha} L \xrightarrow{\beta} W$ for the
associated short exact sequence \eqref{eq:firstwall}.  Recall that the tangent space of $M_H(\vv)$ at $L$ is 
$\Hom(L, L[1])$. The subspace tangent to $\VrdH$ are all $f \in \Hom(L, L[1])$ that provide no
obstructions to lifting global sections to the associated extensions. This means
$f \circ \alpha = 0$, or, equivalently, $f = g \circ \beta$ for some $g \in \Hom(W, L[1])$. 
Let $f_W = \beta[1] \circ g \in \Hom(W, W[1])$ denote the associated deformation class of $W$.

A choice of symplectic form on $X$ makes the Serre duality 
$\Hom(A, B) \times \Hom(B, A[2]) \to \C$ on $\Db(X)$ canonical and bi-functorial in
both arguments.
That choice determines the symplectic form on $M_H(\vv)$ using the Serre duality pairing
\[ \Hom(L, L[1]) \times \Hom(L[1], L[2]) \to \C \]
via $\omega(f, f') = \langle f, f'[1] \rangle$; analogously for
$M_{\overline{\sigma}}^{\mathrm{stable}}(\ww_r)$.
Now assume we are given $f, g, f_W$ as above, and $f', g', f'_W$ analogously, see diagram
\eqref{bigdiagram} below for illustration.
We can compute
\begin{align*}
\omega(f, f') & = \langle f, f'[1] \rangle
= \langle g \circ \beta, g'[1] \circ \beta[1] \rangle  
= \langle \beta[1] \circ g \circ \beta, g'[1] \rangle  
= \langle \beta[1] \circ g,  \beta[2] \circ g'[1] \rangle   \\
& = \langle f_W, f'_W \rangle 
= \omega(f_W, f'_W),
\end{align*}
which is precisely the claim.
\begin{equation} \label{bigdiagram}
\xymatrix{
\cO_X^{r+1} \ar[r]^{\alpha} \ar[d]^0 & L \ar[r]^{\beta} \ar[d]^f & W \ar[dl]_g \ar[d]^{f_W} \\
\cO_X^{r+1}[1] \ar[r]^{\alpha[1]} \ar[d]^0 & L[1] \ar[r]^{\beta[1]} \ar[d]^{f'} & W[1]\ar[dl]_{g'}
\ar[d]^{f'_W}  \\
\cO_X^{r+1}[2] \ar[r]^{\alpha[2]} & L[2] \ar[r]^{\beta[2]} & W[2] \\
}
\end{equation}
\end{proof}

\begin{proof}[Proof of Theorem \ref{thm:VrdC}]
By Lemma \ref{lem:nocurves}, we have
\[ \dim \VrdC = \dim \overline{\pi}^{-1}(C) = \dim \phi\left(\overline{\pi}^{-1}(C)\right).
\]
On the other hand, we know that $\VrdC \subset M_H(\vv)$ is isotropic, as it is a subset of the Lagrangian
subvariety $\pi^{-1}(C)$; by Lemma \ref{lem:pullback},
the same holds true for $\phi\left(\VrdC\right) \subset M_{\bar \sigma}^\st(\ww_r)$.

Therefore, 
\begin{equation} \label{eq:upperbound}
 \dim \VrdC = \dim \phi\left(\VrdC\right)  \le \frac 12 \dim M_{\bar \sigma}(\ww_r) = \rho(r, d, g).
\end{equation}
Equality, including the non-emptiness of $\VrdC$, follows from classical results
\cite{Kleiman-Laksov:BN-existence}, but it can also be deduced in our
context. Recall that
\[ \overline{W}^r_d(\abs{H}) = \bigcup_{r' \ge r} V^{r'}_d(\abs{H}) \]
is a closed subvariety of $M_H(\vv)$, and thus projective. Combining Corollary \ref{cor:dimVrdH} with
\eqref{eq:upperbound} (for all $r' \ge r$) we see that in the map $\overline{W}^r_d(\abs{H}) \to \abs{H} \cong \P^g$, all
fibres have at most expected dimension
$\rho(r, g, d) = \dim\overline{W}^r_d(\abs{H}) - \dim \P^g$. Therefore, all fibres have exactly expected
dimension. Again applying the inequality \eqref{eq:upperbound}, this time for all $r' > r$, it follows that
we must have equality.
\end{proof}

\section{Geometry of the Brill-Noether locus and birational geometry of the moduli space}
\label{sect:biratBN}

Our proof of Theorem \ref{thm:BN} in fact provides a geometric description of the Brill-Noether locus
\[ 
\mathrm{BN}_d(\abs{H}) = \stv{L \in M_H(\vv)}{h^0(L) > 0}.
\]
We have shown that in the natural stratification 
\begin{equation} \label{eq:BNstratas}
\mathrm{BN}_d(\abs{H}) = \bigcup_{r \ge 0} \stv{L \in M_H(\vv)}{h^0(L) = r+1} = \bigcup_r \VrdH,
\end{equation}
each stratum is a Grassmannian-bundle of $(r+1)$-dimensional subspaces in a $2r + 1 + g - d$-dimensional
space over a holomorphic symplectic variety of dimension $2 \rho(r, g, d)$. This recovers a result
by Markman \cite{Markman:BNduality} and Yoshioka (\cite[Lemma 2.4 and Theorem
2.5]{Yoshioka:example-reflections} and \cite[Theorem 4.17]{Yoshioka:Brill-Noether}). One advantage in our description is that we need not
distinguish between Gieseker-stable sheaves in $M_H(\ww_r)$ that are locally free versus those that
are just torsion-free: our discussion in the previous sections shows that instead, $M_{\osigma}(\ww_r)$ is
the right moduli space to consider. One can show that $M_{\sigma_+}(\ww_r)$ consists of
shifts $W = V^\vee[1]$ of derived duals of Gieseker-stable sheaves $V$ of appropriate class. Such a derived dual can be a locally
free sheaf (when $V$ is locally free), or a non-trivial complex $W$ with $H^0(W)$ being a
0-dimensional torsion sheaf. The support of $W$ is simultaneously the locus where the corresponding line bundle in $\VrdH$ is not globally
generated, and where $V$ is not locally free.

Moduli spaces of Gieseker-stable sheaves come equipped with ample line bundles constructed via GIT. 
The closest analogue for moduli spaces of Bridgeland-stable objects comes from the
following result:
\begin{PosLem}[\cite{BM:projectivity}]
Let $\sigma$ be a stability condition on $\Db(X)$ for a smooth projective\footnote{See
\cite{BCZ:nef} for a generalisation to singular quasi-projective varieties and moduli spaces of
objects with compact support.} variety $X$,
and assume we are given a family $\cE$ of
$\sigma$-semistable objects parameterised by a variety $S$. Then this induces a real nef divisor
class\footnote{To be precise, when $S$ is singular we obtain a numerical Cartier divisor class.}
$l_\sigma \in \NS(S) \otimes \R$ on $S$. Moreover, for a curve $C \subset S$ we have $l_\sigma.C = 0$ if and only if 
the objects parameterised by $C$ are \emph{S-equivalent} to each other.
\end{PosLem}
Any $\sigma$-semistable object has a \emph{Jordan-H\"older filtration}:
a filtration whose factors are $\sigma$-\emph{stable} of the same slope. 
Two semistable objects are called $S$-equivalent if their Jordan-H\"older filtrations have the same
stable quotients. In practice, this often means that the Positivity Lemma not only produces nef
divisors, but also dually extremal curves describing a boundary facet of the nef cone.

The line bundle can be constructed as follows: we can always normalise the central charge to satisfy
$Z(\vv) = -1$. Via the Mukai pairing, the imaginary part $\Im Z$ of the central charge can be identified with
an element of $\vv^\perp \otimes \R \subset H^*_{\mathrm{alg}}(X, \R)$. Then
$l_\sigma = \theta_v(\Im Z)$, where $\theta_v$ is the Mukai isomorphism of Theorem
\ref{thm:Mukaiisom}.

We now apply the Positivity Lemma in our situation. Let us again fix $\vv = (0, H, d+1-g)$, and assume for simplicity that
the moduli space $M_H(\vv)$ of torsion sheaves has a universal family. (This assumption is
satisfied when $H^2$ and $d + 1-g$ are coprime; otherwise one can descend the line bundle
constructed in the following from an \'etale cover of the moduli space that admits a universal family.)
We now consider its universal family as a family of
$\sigma_0$-\emph{semistable} objects. The Positivity Lemma produces a nef line bundle $l_{\sigma_0}$
on $M_H(\vv)$. Using Remark \ref{rmk:thetapairing}, one can additionally show that its volume is positive,
and hence that $l_{\sigma_0}$ is big. Since $M$ is K-trivial, the base point free theorem says that 
$l_{\sigma_0}$ is globally generated, and so it produces a birational contraction
\[
\phi_0 \colon M_H(\vv) \to \overline{M}.
\]

To understand the contracted locus, we have to understand S-equivalence for objects in $M_H(\vv)$ with
respect to $\sigma_0$. By Lemma \ref{lem:firstwall}, the Jordan-H\"older filtration of $L \in
M_H(\vv)$ is trivial
when $h^0(L) = 0$; otherwise, its filtration quotients are given by $\cO_X$ with multiplicity
$h^0(L)$, and by the quotient $W$ in the short exact sequence \eqref{eq:firstwall}. In other words,
two objects $L, L'$ are S-equivalent if and only if $h^0(L) = h^0(L')$, i.e. 
$L, L' \in \VrdH$ for some $r$, and if they are in the same Grassmannian fibre of the map $\phi$ in
\eqref{maindiagram}. In summary, we have proved (see also \cite[Section 4]{Yoshioka:Brill-Noether}):

\begin{Thm} \label{thm:contractiongeometry}
Assume that $(X, H)$ satisfy assumption (*), that $\vv = (0, H, d+1-g)$ for some
$0 < d \le g-1$. Then the moduli space $M_H(\vv)$ of torsion sheaves admits a birational contraction
$\overline{\phi} \colon M_H(\vv) \to \overline{M}$, whose 
exceptional locus is the Brill-Noether locus $\mathrm{BN}_d(\abs{H})$. The
natural stratification of $\mathrm{BN}_d(\abs{H})$ by the number of global section corresponds to
the stratification induced by $\overline{\phi}$: each stratum is a Grassmannian-bundle over its image
in $\overline{M}$.
\end{Thm}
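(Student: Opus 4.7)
The plan is to invoke the Positivity Lemma at the wall stability condition $\osigma$. Since $M_H(\vv) = M_{\sigma_+}(\vv)$ and $\sigma_+$ lies just off the wall, every object in the universal family on $M_H(\vv)$ (assumed to exist after possibly passing to an \'etale cover) is $\osigma$-semistable. The Positivity Lemma therefore produces a nef numerical divisor class $l_{\osigma} \in \NS(M_H(\vv)) \otimes \R$.

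Next I would show $l_{\osigma}$ is big. After normalising the central charge so that $\overline Z(\vv) = -1$, the class $l_{\osigma}$ equals $\theta_\vv(\Im \overline Z)$, where $\theta_\vv$ is the Mukai isomorphism of Theorem \ref{thm:Mukaiisom}. By Remark \ref{rmk:thetapairing}, $\theta_\vv$ identifies the Mukai pairing on $\vv^\perp$ with the Beauville--Bogomolov pairing on $\NS(M_H(\vv))$. A direct computation using formula \eqref{eqn:centralchargeexplicit} at the wall shows that $(\Im \overline Z)^2 > 0$ in $\vv^\perp$, so $l_{\osigma}$ has positive BBF-square and is therefore big on the irreducible holomorphic symplectic variety $M_H(\vv)$. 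Since $M_H(\vv)$ has trivial canonical bundle, the base-point-free theorem then promotes $l_{\osigma}$ to a semiample class, producing a birational contraction $\overline{\phi} \colon M_H(\vv) \to \overline{M}$.

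It remains to identify the exceptional locus and its stratification. By the Positivity Lemma, $\overline{\phi}$ contracts a curve $C$ if and only if all objects parameterised by $C$ are $\osigma$-S-equivalent. I would compute the Jordan--H\"older filtrations explicitly: for $L \in M_H(\vv)$ with $h^0(L) = 0$, Lemma \ref{lem:firstwall} implies $L$ is itself $\osigma$-stable, so its S-equivalence class is a single point and it lies outside the exceptional locus. For $L \in \VrdH$ with $r \ge 0$, Lemma \ref{lem:firstwall} furnishes the sequence $\cO_X^{r+1} \into L \onto W$ with $W$ $\osigma$-stable of class $\ww_r$; using Lemma \ref{lem:Wstable} inductively and the simplicity of $\cO_X$ and $W$ in the subcategory of $\osigma$-semistable objects of the appropriate slope, the Jordan--H\"older factors of $L$ are exactly $\cO_X^{\oplus r+1}$ and $W$. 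Hence $L$ and $L'$ are S-equivalent iff $h^0(L) = h^0(L')$ and the quotients $W, W'$ are isomorphic, which by Lemma \ref{lem:plusstable} and the Grassmannian-bundle description of Corollary \ref{cor:dimVrdH} means precisely that $L$ and $L'$ lie in the same fibre of the map $\phi$ in diagram \eqref{maindiagram}. Thus the exceptional locus of $\overline{\phi}$ is $\bigcup_{r \ge 0} \VrdH = \mathrm{BN}_d(\abs{H})$, and on each stratum $\VrdH$ the map $\overline{\phi}$ factors through $\phi$, realising each stratum as a Grassmannian-bundle over its image in $\overline{M}$.

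The main obstacle I expect is verifying bigness of $l_{\osigma}$: one must check that the specific $\Im \overline{Z}$ at the wall has positive BBF-square, carefully tracking sign conventions in the Mukai pairing and the normalisation $\overline Z(\vv) = -1$. The identification of S-equivalence classes is a reasonably formal consequence of the results already established in Sections \ref{sect:firstwall}--\ref{sect:conclusion}, and once bigness is in hand the base-point-free theorem does the geometric heavy lifting.
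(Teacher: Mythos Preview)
Your proposal is correct and follows essentially the same route as the paper's own argument in Section~\ref{sect:biratBN}: apply the Positivity Lemma at the wall $\osigma$ to obtain a nef class, use Remark~\ref{rmk:thetapairing} to verify bigness via the Beauville--Bogomolov square, invoke the base-point-free theorem on the K-trivial variety $M_H(\vv)$ to produce the contraction, and then read off the S-equivalence classes from the Jordan--H\"older filtrations of Lemma~\ref{lem:firstwall}. Your anticipated obstacle (checking bigness carefully) is exactly where the paper also waves its hands, simply asserting that ``one can additionally show that its volume is positive''.
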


In this context, Lemma \ref{lem:pullback} becomes a well-known statement,
see e.g.~\cite[Lemma 2.9]{Kaledin:symplectic-singularities}.

\section{Birational geometry of moduli spaces of sheaves: a quick survey}
\label{sect:biratgeneral}

Many of the statements we have shown so far can be proved in much bigger generality: given a K3
surface $X$ and a primitive class $\vv \in \Halg$, one can describe the location of all walls for
$\vv$ in the entire space of stability conditions, and then in turn use that to completely describe
the birational geometry of the moduli space $M_H(\vv)$ of Gieseker-stable sheaves.

The idea is simple. Let $E$ be an object of Mukai vector $\vv$ that is strictly semistable with respect to
a stability condition $\sigma$ on a general point of a given wall. We consider its Jordan-H\"older factors. If $\aa_1, \dots, \aa_m$ are their Mukai vectors, then
\begin{itemize}
\item $\vv = \aa_1 + \dots + \aa_m$;
\item the wall is locally described by the condition that the
central charges $Z(\aa_i)$ all lie on the same ray;
\item by Theorem \ref{thm:dimmodspace} we have $\aa_i^2 \ge -2$ for all $i$; and
\item all $\aa_i$ are contained in a common rank two sublattice of $\Halg$.
\end{itemize}

One can in fact prove the converse: if all four conditions above are satisfied, then the stability
condition lies on a wall for $\vv$. (The main complication comes from ``totally semistable walls'':
there might not exist any object of class $\aa_i$ that is \emph{stable} on the wall; in this case,
we have to use a different decomposition of $\vv$ within the same rank two sublattice.) Further, one
can determine when there exist curves of S-equivalent objects, and thus whether the wall induces a
birational contraction.

This analysis is the main content of \cite{BM:walls}. It leads, for example, to a complete
description of the nef cones of all birational models of $M_H(\vv)$ inside 
$\NS\left(M_H(\vv)\right) \otimes \R$. To explain that description, we first need to recall a few 
basic facts about birational geometry and the Beauville-Bogomolov form on irreducible holomorphic
symplectic varieties. It is a quadratic form on $\NS(M_H(\vv))$ of signature
$(1, \rho-1)$. The cone defined by $(D, D) > 0$ thus has two components; one of them contains the
ample cone, and we will call this component the \emph{positive cone}. The volume of a divisor
$D$ is, up to a constant factor, given by $(D, D)^n$ where $2n = \dim M_H(\vv)$, and thus the cone
of movable divisors is contained in the closure of the positive cone. The cone of movable divisors admits a chamber
decomposition whose chambers correspond one-to-one to smooth, K-trivial birational models 
$g \colon M_H(\vv) \dashrightarrow N$ of
$M_H(\vv)$: the chamber is given as $g^* \mathrm{Nef}(N)$; see \cite{HassettTschinkel:MovingCone}.

\begin{Thm}[{\cite[Theorem 12.1]{BM:walls}}] \label{thm:nefcones}
Inside the positive cone of $M_H(\vv)$, each chamber of the movable cone is cut out by hyperplanes of
the form $\theta_\vv(\vv^\perp \cap \aa^\perp)$ for all $\aa \in \Halg$ satisfying
$\aa^2 \ge -2$ and $\abs{(\vv, \aa)} \le \frac{\vv^2}2$. 
\end{Thm}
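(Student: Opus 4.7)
The plan is to use the Positivity Lemma to translate the wall-and-chamber structure on $\Stab(X)$ (for the Mukai vector $\vv$) into the wall-and-chamber decomposition of the movable cone of $M_H(\vv)$. The basic correspondence is: for a stability condition $\sigma = (\cA, Z)$ normalized so that $Z(\vv) = -1$, the Positivity Lemma produces a nef class $l_\sigma = \theta_\vv(\Im Z) \in \NS(M_H(\vv)) \otimes \R$; Remark \ref{rmk:thetapairing} then makes $(l_\sigma, l_\sigma)$ (in the Beauville-Bogomolov form) positive, so $l_\sigma$ is also big. When $\sigma$ lies in the interior of a chamber for $\vv$, every $\sigma$-semistable object of class $\vv$ is actually $\sigma$-stable, so $S$-equivalence is trivial on $M_\sigma(\vv)$ and $l_\sigma$ is in fact \emph{ample}; thus $\sigma \mapsto l_\sigma$ sends each chamber into the ample cone of some smooth K-trivial birational model of $M_H(\vv)$, by \cite{BM:projectivity}.

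Next, I would classify the walls of $\Stab(X)$ for $\vv$ via the four bullet points listed just before the statement of the theorem. A wall $\cW$ corresponds to a decomposition $\vv = \aa_1 + \dots + \aa_m$ into Mukai vectors of $\sigma$-stable Jordan-H\"older factors, all contained in a common rank-two primitive sublattice $\cH \subset \Halg$ that contains $\vv$, with $\aa_i^2 \ge -2$ for each $i$, and where $\cW$ itself is cut out by the condition that the $Z(\aa_i)$ are all positive real multiples of $Z(\vv)$. Modulo the subtleties of totally semistable walls, it suffices to consider two-term decompositions $\vv = \aa + (\vv - \aa)$ with $\aa^2 \ge -2$ and $(\vv - \aa)^2 = \vv^2 - 2(\vv, \aa) + \aa^2 \ge -2$; using the symmetry $\aa \leftrightarrow \vv - \aa$ to pick representatives, one extracts exactly the clean condition $\aa^2 \ge -2$ together with $\abs{(\vv, \aa)} \le \vv^2 / 2$.

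Then I would translate each wall $\cW$ into a hyperplane in $\NS(M_H(\vv)) \otimes \R$. The condition that $Z(\aa)$ be real-proportional to $Z(\vv) = -1$ is $\Im Z(\aa) = 0$; viewing $\Im Z$ as an element of $\vv^\perp \otimes \R$ via the Mukai pairing, this reads $\Im Z \in \aa^\perp$. Applying $\theta_\vv$ shows $l_\sigma \in \theta_\vv(\vv^\perp \cap \aa^\perp)$ for every $\sigma \in \cW$. The second part of the Positivity Lemma then provides a curve of $S$-equivalent objects on which $l_\sigma$ vanishes, showing that $\theta_\vv(\vv^\perp \cap \aa^\perp)$ is a genuine wall of the nef cone of one of the birational models of $M_H(\vv)$, and hence a wall of the movable cone.

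The hard part, and the main content of \cite{BM:walls}, is the converse: establishing that these are the \emph{only} walls. This requires showing that $\sigma \mapsto l_\sigma$ covers the entire interior of the movable cone of $M_H(\vv)$, which in turn uses global information about the topology of $\Stab(X)$ together with a Markman-style lattice-theoretic description of the movable cone. A secondary, technically delicate obstacle is the analysis of totally semistable walls, where no object of class $\aa$ is actually $\sigma$-stable on $\cW$ and one must replace the naive two-term decomposition by another effective decomposition within $\cH$ (typically produced by spherical twists). For the statement of Theorem \ref{thm:nefcones}, however, only the rank-two sublattice $\cH$ enters, equivalently $\aa$ modulo $\Z \vv$ and sign; this is exactly the ambiguity reflected in the symmetric bound $\abs{(\vv, \aa)} \le \vv^2/2$, and the final list of hyperplanes survives this subtlety unchanged.
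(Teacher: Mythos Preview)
The paper does not actually prove this theorem; it is quoted from \cite{BM:walls} and only the surrounding paragraphs sketch the underlying ideas (the four bullet-point constraints on Jordan--H\"older factors, the converse, and the caveat about totally semistable walls). Your proposal is a faithful expansion of precisely that sketch: you use the Positivity Lemma to send chambers of $\Stab(X)$ to ample cones of birational models, you read off the hyperplane $\theta_\vv(\vv^\perp \cap \aa^\perp)$ from the alignment condition $\Im Z(\aa)=0$, and you correctly flag the two genuine difficulties (surjectivity of $\sigma \mapsto l_\sigma$ onto the movable cone, and the bookkeeping for totally semistable walls) as the substantive content deferred to \cite{BM:walls}.

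One small point worth tightening: your derivation of the numerical condition $\abs{(\vv,\aa)} \le \vv^2/2$ from $\aa^2 \ge -2$ and $(\vv-\aa)^2 \ge -2$ is not quite right as stated. Those two inequalities give $(\vv,\aa) \le \frac{\vv^2}{2} + \frac{\aa^2}{2} + 1$ rather than $(\vv,\aa) \le \frac{\vv^2}{2}$ directly, and the symmetry $\aa \leftrightarrow \vv - \aa$ alone does not eliminate the extra $\frac{\aa^2}{2}+1$. In \cite{BM:walls} the bound comes instead from analysing which walls actually induce a contraction (i.e.\ admit a curve of $S$-equivalent objects), not merely which rank-two sublattices $\cH$ occur; the class $\aa$ in the statement is chosen within $\cH$ so as to realise the wall as a genuine facet of a nef cone, and it is this refined choice, together with the freedom to translate $\aa$ by multiples of $\vv$, that yields the clean inequality. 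Your outline is on the right track but glosses over this step.
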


In other words, given the arrangement of hyperplanes of the form $\theta_\vv(\vv^\perp \cap \aa^\perp)$
for all $\aa$ as above, each such chamber is a connected component of the complement. Combined with
a similar description of the movable cone (which is due to Markman \cite{Eyal:survey}, but can also
be reproved with the methods discussed here) this leads to a complete list of all birational models of
$M_H(\vv)$; the only necessary ingredient is the Picard lattice of $X$.

In any given example, one can also attempt to study the birational geometry of the contraction in
order to obtain a result analogous to Theorem \ref{thm:contractiongeometry}; this has been done
systematically up to dimension 10 in \cite{HT:survey} (along with other applications).

For an analogue of Theorem \ref{thm:nefcones} for the singular O'Grady spaces of dimension
10, see \cite{Ciaran-Ziyu}. 

\subsection*{Deformations} Using either twistor deformations \cite{Mongardi:note} or deformation
theory of rational curves in families of irreducible holomorphic symplectic manifolds (IHSM) \cite{Mori-cones} one can deform Theorem \ref{thm:nefcones} to an analogue for all
IHSM deformation-equivalent to Hilbert schemes on K3
surfaces; this concludes a programme started in \cite{HassettTschinkel:RationalCurves}. Thus, indirectly, the methods discussed here lead to a description of the birational
geometry of varieties that (currently) have no interpretation as a moduli space. 

\subsection*{Other surfaces} In the case of abelian surfaces, or K3 surfaces of Picard rank one, the
Positivity Lemma was first proved in  \cite{Minamide-Yanagida-Yoshioka:wall-crossing,MYY2} using
Fourier-Mukai transforms. Yoshioka then deduced in \cite{Yoshioka:cones} a
description of nef cones of (Kummer varieties associated to) moduli spaces of sheaves on abelian
surfaces, obtaining a result completely analogous to Theorem \ref{thm:nefcones}.

Extending this result to other surfaces is, to some extent\footnote{The main difficulty specific 
to K3 surfaces is essentially due to the large group of autoequivalences of $\Db(X)$: they produce
many walls where \emph{every} object in a given moduli space becomes strictly semistable. The
easiest example is the analogue of our situation for $d \ge g$: the wall corresponding to Lemma
\ref{lem:firstwall} now destabilises all torsion sheaves. The wall-crossing still induces a
birational transformation of the moduli spaces, but on the common open subset each stable object
gets replaced via its image under the auto-equivalence given by the spherical twist at $\cO_X$.}, much more difficult. Even for
Gieseker-stable sheaves, it is in general unknown for which Chern classes there exist Gieseker-stable
sheaves, i.e.~there is no analogue of Theorem \ref{thm:dimmodspace}. Even when it exists, as in the
case of $\P^2$, the answer \cite{Drezet-LePotier} is quite intricate. Moreover, the answer changes
as we move from Gieseker-stability to Bridgeland stability conditions, making the wall-crossing analysis 
much more of a moving target. 

For an Enriques surfaces $S$, one can circumvent some of these difficulties by using the pull-back
map $\pi^*$ where $\pi \colon X \to S$ is the associated 2:1-covering by a K3 surfaces; this induces
a finite map between corresponding moduli spaces, and can be used to show that the nef divisors
produced by the Positivity Lemma are actually ample.
The results are especially powerful for \emph{unnodal} Enriques surfaces (i.e., not containing a
smooth rational curve); see \cite{Howard:stabEnriques}. 

\subsection*{Projective plane} The entire story originally started with the case of $\P^2$: in \cite{ABCH:MMP}, the authors observed the
correspondence between walls for stability conditions and birational transformations of the Hilbert
scheme of $n$ points on $\P^2$ in many examples---for example, including all walls for all
$n \le 9$; they conjectured the correspondence between stable base loci and destabilised objects in
general. This paper was the original motivation behind all the developments discussed here,
and in particular directly motivated the Positivity Lemma above.

The correspondence of \cite{ABCH:MMP} was generalised to all
Gieseker-moduli spaces and proved in \cite{AaronAndStudents:P2}; a different argument in
\cite{Izzet-Jack:interpolation} treated the case of torus fixed points in the Hilbert scheme. It was
upgraded to a birational correspondence (by proving that all Bridgeland moduli spaces appearing in
the wall-crossing for the Hilbert scheme are irreducible) in \cite{Chunyi-Xiaolei:MMP}. The authors
also extended their results to commutative deformations of $\Hilb^n(\P^2)$ using stability conditions on
the derived category of non-commutative deformations of $\P^2$. 

From this correspondence, one can deduce a description of the nef cone of Gieseker-moduli spaces, 
see \cite{CC:one-dimensional, Matthew:torsion} for torsion sheaves, and 
\cite{Izzet-Jack:ample} for small rank or large discriminant: again the idea is to apply the
Positivity Lemma at a wall, producing a nef divisor \emph{and}, dually, a contracted extremal curve
of S-equivalent objects.

But due to the difficulties hinted at above, it took additional effort to understand the entire
picture, including the nef cones of birational models.  One needed to understand for which wall a
moduli space of stable objects of given Chern character becomes empty. This turns out to be closely
related to another classical problem:

\begin{heur} For any class $\vv \in H^*(\P^2)$, determining the ``last wall'', i.e. the wall
after which $M_{\sigma}(\vv)$ becomes empty, is equivalent to determining the boundary of the
effective cone of $M_H(\vv)$.
\end{heur}

The reasoning behind this heuristic goes as follows. Consider the nef divisor $l_{\osigma}$ given by
the Positivity Lemma for $\osigma$ lying on this ``last
wall''; in particular, this means every object becomes strictly semi-stable with respect $\osigma$.
Then one can \emph{expect} every point in the moduli space to lie on a curve of objects that are
S-equivalent with respect to $\osigma$; in other words, $l_{\osigma}$ is dual to a moving curve in
the Mori cone. This implies that $l_{\osigma}$ is on the boundary of the effective cone.\footnote{This
is a heuristic argument only for two reasons: even if every object is strictly semistable, some or
all of them could be the unique non-trivial extensions in their S-equivalence class. Moreover
when all objects become strictly semistable, that does not a priori preclude the existence of new
stable objects on the other side of the wall; in that case, the wall corresponds to the boundary of
the effective cone, but is not the ``last wall''.}


The problem of determining the effective cone was solved in \cite{Jack:Steiner,
Huizenga:P2} for the Hilbert scheme,  in \cite{CC:one-dimensional, Matthew:torsion} for
one-dimensional torsion sheaves, and in \cite{Izzet-Jack-Matthew} for all Gieseker-moduli spaces;
see \cite{Izzet-Jack:P2} for a survey of the results and the arguments, and the relation to the
interpolation problem. The recent preprint \cite{Chunyi-Xiaolei:birational} then made the above
heuristic reasoning precise, and used it to give a complete description of the decomposition of the
movable cone into chambers corresponding to nef cones of birational models. I would like to explain
one more consequence of their results:

\begin{Prop}
[{\cite[Theorem 0.1]{Chunyi-Xiaolei:MMP}, \cite[Corollary
0.3]{Chunyi-Xiaolei:birational}, building on essentially all the other results mentioned in this
section}] \label{prop:smoothness}
Let $\vv \in H^*(\P^2)$ be a primitive class, let $M(\vv)$ be the moduli space of
Gieseker-stable sheaves of Chern character $\vv$, and let $M \dashrightarrow M(\vv)$ be a birational
model corresponding to an open chamber in the movable cone of $M(\vv)$. Then $M$ is smooth. 
\end{Prop}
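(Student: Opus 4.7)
My plan is to identify the birational model $M$ with a moduli space of Bridgeland-stable complexes on $\P^2$, and then deduce smoothness from the vanishing of $\Ext^2$ via Serre duality. For the first step, I would invoke \cite{Chunyi-Xiaolei:MMP, Chunyi-Xiaolei:birational}, which give a bijection between open chambers of the movable cone of $M(\vv)$ and open chambers of stability conditions in the relevant region of $\Stab(\P^2)$, together with an identification $M \cong M_\sigma(\vv)$ for any $\sigma$ in the corresponding chamber. Since $\vv$ is primitive and $\sigma$ is generic with respect to $\vv$, no strictly $\sigma$-semistable objects of class $\vv$ exist, so $M$ parametrises only $\sigma$-stable complexes.

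Next, I would carry out the deformation-theoretic argument. For any $\sigma$-stable object $E$ we have $\Hom(E,E) = \C$, the tangent space to $M$ at $[E]$ is $\Ext^1(E,E)$, and the obstructions lie in $\Ext^2(E,E)_0 \subseteq \Ext^2(E,E)$; thus it suffices to show that $\Ext^2(E,E) = 0$. By Serre duality on $\P^2$ one has
\[
\Ext^2(E, E) \cong \Hom(E, E(-3))^\vee,
\]
so the entire problem reduces to the vanishing $\Hom(E, E(-3)) = 0$.

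The third step is the core of the argument. The autoequivalence $\otimes \cO(-3)$ acts on $\Stab(\P^2)$ and sends $\sigma$ to a stability condition $\sigma'$ with respect to which $E(-3)$ is stable of the same phase as $E$ is under $\sigma$. I would then compare $\sigma'$ and $\sigma$ directly: using the concrete description of walls and the action of autoequivalences on $\Stab(\P^2)$ from \cite{Chunyi-Xiaolei:MMP, Chunyi-Xiaolei:birational}, one shows that, relative to $\sigma$, every Harder-Narasimhan factor of $E(-3)$ has phase strictly smaller than the phase of $E$. This kills $\Hom(E, E(-3))$ and completes the proof.

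The main obstacle is this final uniform phase comparison. For a Gieseker-stable sheaf it is the familiar statement that tensoring by $\cO(-3)$ strictly lowers slope, but for a general $\sigma$ in an arbitrary chamber of the movable cone one needs a bound on the maximum HN phase of $E(-3)$ that holds for \emph{every} $\sigma$-stable $E$ of class $\vv$ simultaneously. This is precisely where the detailed knowledge of the wall structure and of the geometry of autoequivalences on $\Stab(\P^2)$, built up in \cite{AaronAndStudents:P2, Chunyi-Xiaolei:MMP, Chunyi-Xiaolei:birational}, becomes indispensable.
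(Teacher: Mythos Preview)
Your proposal is correct and follows essentially the same argument the paper sketches: identify $M$ with $M_\sigma(\vv)$ for $\sigma$ in an open chamber, reduce smoothness to $\Hom(E,E(-3))=0$ via Serre duality, observe that $E(-3)$ is only $\sigma_{\alpha,\beta-3}$-stable rather than $\sigma$-stable, and then invoke the phase-control argument of \cite{Chunyi-Xiaolei:MMP, Chunyi-Xiaolei:birational} along the path from $\sigma_{\alpha,\beta-3}$ to $\sigma_{\alpha,\beta}$ to bound the HN phases of $E(-3)$ strictly below that of $E$. You have also correctly located where the real work lies.
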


To explain the argument, let us briefly recall why $M(\vv)$ is smooth. For 
$F \in M(\vv)$, we have to show $\Ext^2(F, F) = 0$; by Serre duality,
$\Ext^2(F, F) = \Hom(F, F(-3))^\vee$; since $F, F(-3)$ are both slope-semistable with $\mu(F) >
\mu(F(-3))$, the claim follows. 

To generalise this to birational models of $M(\vv)$, we first use their interpretation as moduli spaces. 
As indicated previously, we
know that $M \cong M_\sigma(\vv)$ where $\sigma = \sigma_{\alpha, \beta}$ lies in an open chamber of the space of stability
conditions. As above, for $E \in M_{\sigma}(\vv)$, we have $\Ext^2(E, E) = \Hom(E, E(-3))^\vee$.
However, Bridgeland stability is not invariant under $\blank \otimes \cO(-3)$; instead, all we know
a priori is that $E(-3)$ is $\sigma_{\alpha, \beta-3}$-stable. The key argument of
\cite{Chunyi-Xiaolei:MMP, Chunyi-Xiaolei:birational} now shows that as we follow
the natural path from $\sigma_{\alpha, \beta-3}$ to $\sigma_{\alpha, \beta}$, we can control the
phases of the semistable factors appearing in the Harder-Narasimhan filtration of $E(-3)$, and conclude that they all have
smaller phase than that of $E$; then the Hom-vanishing follows again from stability.

\subsection*{General surfaces}
Similar results for the the Hilbert scheme on other rational surfaces were obtained in
\cite{Aaron-Izzet:pointsonsurfaces}, for example including nef cones of all Hilbert schemes points
on Hirzebruch surfaces. In the case of $\P^1 \times \P^1$, the effective cone of many
moduli spaces of sheaves have been determined in \cite{Tim:effectiveP1P1}, and in all cases where
$c_1$ is symmetric in \cite{Abe:symmetricP1P1}. 

Two recent articles show that one can make at least some of the arguments simultaneously for all
surfaces. 
For example, one of the main results of \cite{7authors:nefcones} shows that for a surface of Picard
rank one and $n \gg 0$, one can determine the nef cone of $\Hilb^n(X)$. The assumption of $n \gg 0$
is needed to ensure that an effective curve $C$ of minimal degree has non-empty $W^1_n(C)$; 
the associated map $C \to \P^1$ produces the curve of S-equivalent objects dual to the nef divisor
class coming from the Positivity Lemma. Similarly, in
\cite{Izzet-Jack:strongBG} the authors show that if one fixes the rank $r$ and the first Chern character
$c$, then for $s \ll 0$ one can determine the nef cone of the moduli space of
Gieseker-stable sheaves on $X$ of Chern character $(r, c, s)$ if one knows the set of Chern classes of
semistable bundles on $X$. (In other words, the assumption $s \ll 0$ circumvents the problem of
knowing when moduli spaces of $\sigma$-stable objects become empty.)

\subsection*{Other applications}
We list a few more relations between stability conditions and classical questions 
that have appeared in the literature, and may lead to more applications in the future.
\begin{itemize}
\item The contraction from the Gieseker-moduli space to the Uhlenbeck space of
slope-semistable vector bundles can be induced by wall-crossing \cite{LoQin:miniwalls,
Jason:Uhlenbeck} (i.e., there is a wall for which the associated line bundle induces this
contraction).
\item Similarly, the Thaddeus-flips constructed in \cite{MatsukiWenthworth:TwistedVariation}
relating Gieseker-moduli spaces for different polarisations are
induced by a sequence of walls \cite{Yoshioka:wallcrossingonblowup, 1505.07091}.
\item Flips of secant varieties can be shown to arise naturally in the wall-crossing for moduli
spaces of torsion sheaves on $\P^2$ \cite{1311.1183}.
\item One can induce the minimal model programme of a surface $X$ via
wall-crossing in $\Db(X)$ \cite{Toda:MMPsurfaces}; yet the moduli space becomes reducible if one tries to
contract other curves of self-intersection less than -2 \cite{Becca:thesis}. 
\item There is a a close relation between the location of the wall where a given ideal
sheaf in $\Hilb^n(\P^2)$ gets destabilised and its Castelnuovo-Mumford regularity
\cite{Izzet-Hyeon-Park}.
\end{itemize}

Some recent developments have already lead to new results.

In \cite{Wafa-Antony}, the authors combine stability conditions with Fourier-Mukai techniques 
to determine precisely which line bundles on an abelian surface of Picard rank one are $k$-\emph{very ample}.

Finally, returning to a topic closely related to the main content of this survey, consider a
globally generated line bundle $L \in \VrdH$, and its Mukai-Lazarsfeld bundle $M_L$ (where $M_L
\cong W[-1]$ with $W$ as given in Lemma \ref{lem:firstwall}). In
\cite{Soheyla:Mercat}, the author uses stability conditions in order to prove ordinary slope-stability of
the restriction of $M_L$ to any curve in $\abs{H}$. This leads to many new counter-examples
to Mercat's conjecture, which was a proposed bound for the analogue of the Clifford index for slope-stable vector
bundles on curves in terms of the Clifford index for line bundles.

\bibliography{all}                      
\bibliographystyle{halpha}     

\end{document}